\documentclass[a4paper,10pt,reqno]{amsart}
\usepackage{amssymb,latexsym,bbm,amsmath,amsfonts}
\usepackage{amsthm}
\usepackage[mathscr]{eucal}
\usepackage{rotating}
\usepackage{multirow}

\numberwithin{equation}{section}
\newtheorem{theorem}{Theorem}
\newtheorem{lemma}{Lemma}

\theoremstyle{definition}

\theoremstyle{remark}




                          %

\begin{document}
	
	\title[Asymptotic behaviour of pantograph equations]
	{Characterisation of asymptotic behaviour of perturbed deterministic and stochastic pantograph equations}
	
	\author{John A. D. Appleby}
	\address{School of Mathematical
		Sciences, Dublin City University, Glasnevin, Dublin 9, Ireland}
	\email{john.appleby@dcu.ie} 
	
	\author{Emmet Lawless}
	\address{Department of Mathematics, 
		University of Michigan, 
		Ann Arbor, MI 48109-1043, U.S.A.}
	\email{elawless@umich.edu}
	
	\thanks{JA is supported by the RSE Saltire Facilitation Network on Stochastic Differential Equations: Theory, Numerics and Applications (RSE1832). EL was supported by Science Foundation Ireland (16/IA/4443).} 
		\subjclass{34K06, 34K12, 34K20, 34K25, 34K50, 60H10}
	\date{24 September 2025}
	
	\begin{abstract}
This paper considers the asymptotic behaviour of  deterministically and stochastically forced linear pantograph equations. The asymptotic behaviour is studied in the case when all solutions of the  pantograph equation without forcing tend to a trivial equilibrium. 
In all  cases, we  give necessary and sufficient conditions on the forcing terms which enable all solutions to converge to the equilibrium of the unforced equation, and which enable solutions to remain bounded. In the deterministic case, we give sharp conditions on forcing terms which enable the solutions of the forced equations to inherit the power law behaviour of the unforced equation, as well as slower rates of decay or growth not present in the unforced equation. Extensions to equations with general unbounded delay, and to finite--dimensional equations are also presented.
	\end{abstract}
	
	\maketitle
	
	\section{Motivation and Scope of the Work}
	\label{sec:2}
	
	Let $a\neq 0$, $b$ be real numbers, and let $q\in (0,1)$. The (scalar valued solution of the) delay differential equation  
	\begin{equation}\label{eq.unforced}
		u'(t)=bu(t)+au(qt), \quad t\geq 0; \quad u(0)\neq 0,
	\end{equation}
	has attracted a lot of attention. The is a so--called pantograph equation, first studied in depth by Kato and McLeod~\cite{KatoMc} and Fox et al and Ockendon and Tayler~\cite{Foxetal,OckTay}, with finite dimensional analysis in \cite{CarrD1,CarrDyson2}. Apart from interest in applications, it attracts mathematical interest as it is an example of a delay differential equation with unbounded delay, and, despite the constant multipliers $a$ and $b$, the equation is also non--autonomous, by virtue of the ``proportional'' delay $qt$. One interesting feature of linear differential equations with unbounded delay is that solutions that tend to an equilibrium need not do so at an exponentially fast rate (see e.g., \cite{AppBuck10,Kris88,MakTerj,Diblik98,DibRuz}). Pantograph equations are no exception, and Kato and McLeod \cite{KatoMc} showed that solutions which tend to zero do so at a power law rate of decay. Further treatments of deterministic equations with unbounded delay include \cite{cermak, HaddKris, Iser, Liu}, and work continues to date e.g. \cite{cer2022}. Fractional and integrodifferential equations of pantograph type have also been studied \cite{MR4534554}.
	
	There has been a lot of activity in extending results to stochastically perturbed pantograph equations and equations with unbounded delay, especially in the direction of asymptotic behaviour and numerical methods (see e.g.\cite{App2005, BakBuck2000,BakBuck2005, Fan,Huang,Huetal,Jiang,LiuChen,Meng1,Meng2,PJ2012, YanHuang,Yangetc}). The first such work on asymptotic behaviour is \cite{AppBuck2016}. In this field of stochastic asymptotic analysis, papers concerning power--law behaviour and numerical methods \cite{MR4439350,MR3927238,Milo}, large deviations \cite{MR4585696}, and neutral equations \cite{MR4278069} and L\'evy noise \cite{MR4053553} have also been recently considered. 
	Many \textit{sufficient conditions} have been obtained with respect to which solutions of stochastic equations exhibit convergence to a limit, either in $p$--th moment or pathwise. However, what appears to be absent in this literature are results which \textit{characterise} certain types of asymptotic behaviour in terms of forcing terms, such as, for instance, convergence to the equilibrium of the unforced equation, or asymptotic rates of growth or decay of forced solutions; it is one of the chief contributions of this paper to supply such asymptotic characterisations. But as we indicate now, we believe that the techniques in this paper have much wider application than to pantograph equations. 
	
	In this paper, we consider perturbations of \eqref{eq.unforced} where the intensity of the forcing terms is state--independent. Both deterministic and stochastic equations are considered, and in the case of stochastic equations, we allow the solution to be forced both deterministically and stochastically. The main contribution of the work is to give a  \textit{characterisation} of certain convergence, boundedness, and growth estimates in terms of the forcing terms. By a  characterisation of an asymptotic property \textbf{A} of solutions, we mean that solutions possess property \textbf{A} holds if and only if some conditions \textbf{C} are satisfied by the forcing terms; moreover, the conditions \textbf{C} should be expressible without recourse to properties of the solution of the forced equation, or other auxiliary objects that cannot be directly computed from the problem data. Ideally, such conditions should be easy to check. The characterisations we give satisfy these natural requirements.   
	
	The techniques used in this paper are especially suited to considering such ``additive'' rather than ``multiplicative'' perturbations (the terminology and analysis is more commonplace for stochastic equations, but is appropriate for deterministic equations too). Stochastic equations with such state--dependent noise have been studied in the papers listed above. Our analysis of additive noise equations relies on  a decomposition approach to solutions of stochastic delay differential equations, which allows such solutions to be built from solutions of stochastic ordinary differential equations with non--differentiable paths, and solutions of random functional differential equations which are continuously differentiable. A similar approach can be applied to equations with multiplicative noise, and a sketch of possible research in that direction is given in Section 6.  
	
	Other generalisations, to scalar equations with a general unbounded delay, or to multidimensional equations, are considered in Section 6, and sketch proofs are given which show that at least basic convergence and boundedness results still hold, giving reassurance that the theory is not confined to the narrow class of scalar, proportional delay equations. However, the main thrust of our paper concerns pantograph equations, partly due to their celebrated role in the historical development of the theory of unbounded delay equations, and partly because, since the asymptotic behaviour of the unperturbed pantograph equation is essentially perfectly understood (in contrast to many other unbounded delay equations) we can exploit this knowledge to obtain equally pristine results in the perturbed case, thereby demonstrating the sharpness of our approach. 
	
	In doing so, we are able to improve decay estimates on perturbed equations, show that these estimates are optimal in certain cases, accurately characterising when perturbation sizes cause solutions to transition from preserving the asymptotic behaviour of the unperturbed equation to slower rates of decay (or even growth) when the perturbation are larger than a critical size. In fact, by proving both big-$O$ and little-$o$ growth estimates on solutions throughout, we are able to characterise in many cases (in terms of an easily estimated functional of the perturbation, which is entirely independent of the structure of the unperturbed equation) when the solution of the perturbed equation has a given order of growth (or decay). The combination of (a) sharp estimation of growth rates (b) corresponding  correct determination of the transition point from the small to the large perturbation regime, and (c) characterisation of the growth conditions on the forcing terms which give rise to certain rates of growth or decay in solutions, are to the best of our knowledge new in the asymptotic study of unbounded delay differential equations. 
	
	Another novel feature in our analysis is to be able, for functional differential equations which are simultaneously deterministically and stochastically perturbed, to characterise certain important asymptotic behaviours. In particular, we are able to characterise exactly when solutions of perturbed equations are convergent or bounded, and the characterising conditions are very easy to check. Such a characterisation, with only continuity side conditions on forcing terms being required, seems new to the literature, \textit{even for non--delay stochastic differential equations}. Better yet, we sketch at the end of Section 5 an argument which suggests strongly that \textit{such a characterisation might well apply to a very broad class of functional differential equations} which can have finite or unbounded delay, and which does not possess any of the special structure of the pantograph equations which we study here. 
	
	We note that a great deal of the sharpness achieved in our results comes from a very careful asymptotic characterisation of solutions of scalar linear \textit{ordinary} and stochastic differential equations. Remarkably, given the simplicity of the equations involved, such a characterisation does not seem to be present in the literature, and part of the purpose of this paper is to show that a thorough \textit{development of this theory for linear ordinary differential equations will unlock a characterising perturbation theory for a very wide class of functional differential equations}. 
	
	We have chosen perturbed pantograph equations to do this, because the perturbation theory of non--autonomous delay differential equations (of which the pantograph equation is a well--studied example) has traditionally found comparatively little use for resolvents or fundamental solutions, at least compared with autonomous equations. Indeed, the analysis in this paper likewise makes no use whatsoever of resolvents or variation of constants formulae. In itself, this is neither noteworthy nor surprising. However, in other works, the authors have given asymptotic characterisation results for perturbations of autonomous deterministic and stochastic functional and Volterra equations, in which deterministic and stochastic variation of constants formulae are in constant use. This might leave one to believe that such formulae play a critical role in making the perturbation theory both achievable and sharp. \textit{However, this paper shows that while these resolvent--based representations are certainly useful when available, they do not appear to be essential for the development of a sharp perturbation theory}. As a result, the pantograph equation represents an interesting test case for a sharp perturbation theory that applies to other non--autonomous or even non--linear functional differential equations.         
	
	\subsection{Technical introduction}
	To make some of this general discussion precise, we let $f$ and $\sigma$ be continuous deterministic functions, $B$ be a standard Brownian motion, and $a$ and $b$ be real numbers obeying 
	\begin{equation} \label{eq.unforcedasstable}
		b<0,  \quad |b|>|a|,
	\end{equation}
	and consider 
	\begin{gather*}
		x'(t)=ax(qt)+bx(t)+f(t),\\
		dX(t)=(aX(qt)+bX(t)+f(t))\,dt+\sigma(t)\,dB(t).
	\end{gather*}
	\eqref{eq.unforcedasstable} is a  necessary and sufficient condition for all solutions of \eqref{eq.unforced} to tend to zero as $t\to\infty$ \cite{KatoMc}. Throughout we take $a\neq 0$, since this case deals with non--delayed equations. In the case of the deterministic equation, we give necessary and sufficient conditions on $f$ for which solutions will tend to zero, or for which solutions are bounded. 
	We also characterise when solutions $x$ are unbounded with a certain order of growth, or decay to zero more slowly than those of \eqref{eq.unforced}, and give sharp conditions under which the decay rate of $x$ is the same as that of \eqref{eq.unforced}. For the stochastic equation, we give conditions on $f$ and $\sigma$ which characterise when $X(t)\to 0$ as $t\to\infty$ a.s. and when $X$ is almost surely bounded. Results for multi--dimensional equations are also sketched. In later works, we will develop quantitative pathwise estimates for $X$, and relax the second inequality in \eqref{eq.unforcedasstable}, so as to study the situation when the underlying unforced equation \eqref{eq.unforced} has unbounded solutions. 
	
	In short, therefore this paper shows that provided all solutions of \eqref{eq.unforced} tend to zero, we can find coincident necessary and sufficient conditions on the forcing terms $f$ and $\sigma$ (which do not depend on the parameters $b$, $a$ and $q$) under which the solutions are convergent to zero; in particular, it is not necessary to strengthen conditions on the underlying equation in order to preserve stability, merely to impose appropriate smallness conditions on $f$ and $\sigma$. 
	
	The results in this paper develop work of the authors for certain types of \textit{uniformly asymptotically stable} and \textit{autonomous} linear differential and Volterra equations which are forced both deterministically and stochastically by time--independent forcing terms.  The nature of conditions we impose on $f$ and $\sigma$ for pantograph equations are identical to those needed in the autonomous case, and we avail of existing results from \cite{AppLaw2024ODE} for ordinary differential equations in our proofs. 
	Related results for the mean square asymptotic behaviour of affine equations, where there is also state--dependence in the diffusion coefficient, are given in  \cite{AL:2023(AppliedNumMath)}. For the stochastic differential equation 
	$
	dY_0(t)=-Y_0(t)\,dt +\sigma(t)\,dB(t)$ it is shown in \cite{ACR:2011(DCDS)} that  $Y_0(t)\to 0$ as $t\to\infty$ a.s.  if and only if $S(\epsilon)<+\infty$ for all $\epsilon>0$, where 
	\begin{equation} \label{eq.S}
		S(\epsilon):= \sum_{n=1}^\infty \sqrt{\int_{n-1}^n\sigma^2(s)\,ds} \exp\left(-\epsilon\int_{n-1}^{n}\sigma^2(s)\,ds\right).
	\end{equation}
	In \cite{AppLaw2024ODE} (as discussed in Section 4), it is shown that solutions of the ordinary differential equation $y'(t)=-y(t)+f(t)$ obey $y(t)=O(\gamma(t))$ as $t\to\infty$ for non--decreasing and so--called subexponential weight functions $\gamma$ if and only if $f_\theta$, defined by  
	$f_\theta(t):=\int_{(t-\theta)^+}^t f(s)\,ds$ for $(t,\theta)\in [0,\infty)\times[0,1]$, obeys
	\begin{equation} 
		\label{eq.fthetagammabdd}
		\sup_{0\leq \theta\leq 1} |f_\theta(t)| \leq K\gamma(t), \quad t\geq 0, \quad \text{for some $K>0$}.
	\end{equation}
	(In the above, and throughout the paper, we make extensive use of the standard ``big O'' and ``little o'' Landau notation, as well as the notation $a(t)\sim b(t)$, $t\to\infty$ for real--valued functions $a$ and $b$ to signify that $a(t)/b(t)\to 1$ as $t\to\infty$).
	Finally, in this volume, we also show that the asymptotic behaviour of $f_\theta$  characterises the situation when solutions of deterministic functional and  Volterra differential equations have time averages (see \cite{AL:2024Cesaro}).  
	
	Therefore, we see that the asymptotic behaviour of $f_\theta$ for $\theta\in [0,1]$ and $\sigma^2_1(t):=\int_{(t-1)^+}^t \sigma^2(s)\,ds$ appear to characterise many interesting types of asymptotic behaviour of solutions of forced autonomous equations. In this work, we show that this is true for perturbed pantograph equations also. \textit{Therefore, the evidence accumulates in favour of the overall viewpoint that situations in which important qualitative and quantitative asymptotic properties of solutions of perturbed differential systems (whether deterministic or stochastic, memory--dependent or not, autonomous or not, and perhaps even linear or not), may be characterised by in terms of perturbations through $\{f_\theta:\theta\in [0,1]\}$ and $\sigma^2_1$}.  
	
	%
	
	We recall some facts about the asymptotic behaviour of solutions of  \eqref{eq.unforced}; all results appear in \cite{KatoMc}. There is no loss of generality in taking the initial condition to be unity; denoting the solution of the differential equation with initial condition $\xi\in \mathbb{R}$ by $u(\cdot;\xi)$, we have that 
	$u(t;\xi)=u(t;1)\xi$ for all $t\geq 0$. Notice that if $\xi=0$, then $u(t;\xi)=0$ for all $t\geq 0$, so that the zero function is an equilibrium solution for all choices of the parameters (when $a+b=0$, every constant is a solution, but we do not study this phenomenon). 
	
	If $b>0$, $u(t)/e^{bt}$ tends to a finite limit, so solutions exhibit real exponential growth: we will not study this case here. When $b<0$, however, power--law asymptotic behaviour occurs. Specifically, since $a\neq 0$ and $b<0$ we may define 
	\begin{equation} \label{eq.kappa}
		\kappa:=-\frac{\log|b/a|}{\log(1/q)},
	\end{equation}
	and $u(t)=O(t^\kappa)$ as $t\to\infty$. Indeed, there are no non--trivial solutions that are $o(t^\kappa)$ as $t\to\infty$, so this estimate is very sharp \cite[Theorem 3]{KatoMc}. More precise information regarding oscillation of solutions are known, but since we focus on sharp upper estimates of solutions of perturbed equations, we do not seek such refined results. 
	
	This discussion explains in part why we wish to confine attention in this paper to the case when \eqref{eq.unforcedasstable} holds, but does not mean that it is not interesting, or impossible, to give a perturbation theory in the other parameter regimes mentioned above, and it is a task that we will return to in other works.

	\section{A Fundamental Estimate and Large Perturbation Estimates}
	
	We study the deterministic pantograph equation first, partly for reasons of simplicity, but also because solutions of the stochastic equation can be written in terms of the deterministic equation, when viewed path--by--path in the sample space. Therefore the analysis of the deterministic equation is fundamental. Suppose that 
	\begin{equation} \label{eq.fcns}
		f\in C([0,\infty);\mathbb{R}),
	\end{equation} 
	and consider the forced pantograph equation
	\begin{equation} \label{eq.forced}
		x'(t)=ax(qt)+bx(t)+f(t), \quad t\geq 0; \quad x(0)=\zeta\in \mathbb{R}.
	\end{equation}
	There is a unique continuous solution to this initial value problem (see e.g.~\cite{AppBuck10}). If \eqref{eq.unforcedasstable} holds, then  $u(t)\to 0$ as $t\to\infty$. It is therefore reasonable to ask under what ``smallness'' conditions on the forcing function $f$ certain asymptotic properties (such as convergence to zero, or rates of decay) of $u$ are preserved; on the other hand, we may be interested in understanding the properties of solutions when $f$ does not obey smallness   properties which preserve properties of $u$.  
	
	To this end, in this section, we establish a ``fundamental'' estimate on the size of solutions of perturbed pantograph equations which enables us to obtain not only convergence and stability results, but also (after further analysis later in the paper) very sharp asymptotic bounds, especially when perturbations are, in a certain sense, small.
	The results are of a ``sufficiency'' type, which is to say, if sufficiently strong asymptotic conditions are imposed on the forcing term, this will generate a rate of decay on the solution. A full characterisation of asymptotic behaviour of solutions will require ``necessity'' results, as well as some transformations and decomposition results; this is because our fundamental lemma is usually applied to a perturbed pantograph equation which results from such a decomposition, rather than to the original pantograph equation itself. We will also need estimates which deal with large perturbations, and these will be proved separately. Therefore, our fundamental estimate, although a key tool in the asymptotic analysis, provides only part of the picture. 
	
	This approach differs from our analysis of autonomous differential and Volterra integrodifferential equations, where we exploit variation of constants formulae and known (integrable) behaviour of the differential resolvent. Although perturbed pantograph equations admit such variation of constants formulae, the non--autonomous character of the equation makes it difficult to get good asymptotic information about the underlying resolvent, and this motivates an approach avoiding this problem. In this sense, our approach bears relation to that taken in \cite{AppBuck10}.      
	
	In order to state our fundamental estimate, we introduce some notation. For $a\neq 0$, there are (in general, complex valued) solutions $k$ to the equation $aq^k+b=0$. If $k_0\in \mathbb{C}$ is such a solution,  $\kappa:=\text{Re}(k_0)$ is uniquely defined, and given by \eqref{eq.kappa}.
	Let 
	\begin{align} \label{eq.c}
		c&:=\log q<0,\\
		\label{eq.alpha}
		\alpha&:=|a/b|,
	\end{align}
	so that $e^c=q$, and under \eqref{eq.unforcedasstable}, $\alpha\in (0,1)$. 
	
	\begin{lemma} \label{lemma.fundamental}
		Suppose $q\in (0,1)$ and \eqref{eq.unforcedasstable} holds. Let $\varphi\in C([0,\infty);\mathbb{R})$ and $z$ be the unique continuous solution to 
		\begin{equation} \label{eq.z}
			z'(t)=az(qt)+bz(t)+\varphi(t),\quad t\geq 0; \quad z(0)=\xi\in\mathbb{R}.
		\end{equation}
		Let $\alpha$, $c$ be defined by \eqref{eq.alpha}, \eqref{eq.c} and define $I_n:=[s_0-nc,s_0-(n+1)c]$. Let  
		\begin{equation} \label{eq.Kn}
			K_n:=\sup_{s\in I_n} e^{-\kappa s}|z(e^s)|,
		\end{equation}
		and let $\epsilon_n>0$ be a sequence such that  
		\begin{equation} \label{eq.epsn}
			\sup_{s\in I_{n+1}} |\varphi(e^s)|\leq \epsilon_{n+1}.
		\end{equation}
		\begin{enumerate}
			\item[(i)] If $(\epsilon_n/\alpha^n)$ is summable, $K_n\leq K^\ast$, and  $z(t) = \text{O}(t^\kappa)$ as $t\to\infty$.  
			\item[(ii)] If $(\epsilon_n/\alpha^n)$ is not summable, then $
			K_n\leq K^\ast \sum_{j=1}^n \frac{\epsilon_j}{\alpha^j}$
			and 
			\begin{equation} \label{eq.fundest2}
				\sup_{t\in [e^{s_0-nc},e^{s_0-nc}e^{-c}]} |z(t)|\leq K^\ast  \alpha^n \sum_{j=1}^n \frac{\epsilon_j}{\alpha^j}.
			\end{equation}
		\end{enumerate}
	\end{lemma}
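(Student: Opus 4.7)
The plan is to work block-by-block on the geometric partition $T_n:=e^{s_0-nc}$, which satisfies $T_{n+1}=T_n/q$ and (using $q^\kappa=1/\alpha$, hence $e^{-\kappa c}=\alpha$) $T_n^\kappa=e^{\kappa s_0}\alpha^n$. For $s\in[T_{n+1},T_{n+2}]$ one has $qs\in[T_n,T_{n+1}]$, so the definition \eqref{eq.Kn} gives $|z(qs)|\le K_n(qs)^\kappa$; meanwhile \eqref{eq.epsn} (interpreted in the variable $t=e^s$) gives $|\varphi(s)|\le\epsilon_{n+1}$ on the same block. Rewriting \eqref{eq.z} as the linear ODE $z'-bz=az(q\,\cdot\,)+\varphi$ and applying variation of constants from $T_{n+1}$,
\begin{equation*}
  z(t) = e^{b(t-T_{n+1})}z(T_{n+1}) + \int_{T_{n+1}}^{t} e^{b(t-s)}\bigl(az(qs)+\varphi(s)\bigr)\,ds,\qquad t\in[T_{n+1},T_{n+2}].
\end{equation*}

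The crucial algebraic input is the identity $|a|q^{\kappa}=|b|=-b$, obtained by taking the modulus of $aq^{k_0}+b=0$. An integration by parts gives
\begin{equation*}
|a|q^{\kappa}\int_{T_{n+1}}^{t}e^{b(t-s)}s^{\kappa}\,ds = t^{\kappa} - e^{b(t-T_{n+1})}T_{n+1}^{\kappa} - \kappa \int_{T_{n+1}}^{t} e^{b(t-s)}s^{\kappa-1}\,ds,
\end{equation*}
so the $T_{n+1}^\kappa$ contribution coming from $|z(T_{n+1})|\le K_n T_{n+1}^\kappa$ exactly cancels its counterpart from the integral, leaving
\begin{equation*}
|z(t)|\le K_{n}t^{\kappa} + |\kappa|K_{n}\int_{T_{n+1}}^{t} e^{b(t-s)}s^{\kappa-1}\,ds + \frac{\epsilon_{n+1}}{|b|}.
\end{equation*}
Condition \eqref{eq.unforcedasstable} forces $\kappa<0$, so $s\mapsto s^{\kappa-1}$ is decreasing and the middle integral is bounded by $T_{n+1}^{\kappa-1}/|b|$. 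Dividing by $t^\kappa$ for $t\in[T_{n+1},T_{n+2}]$ (using $\inf t^\kappa$ of order $\alpha^{n+1}$ and $1/T_{n+1}$ a constant multiple of $q^{n+1}$) yields the recurrence
\begin{equation*}
K_{n+1} \le (1 + C_1 q^{n+1})K_{n} + C_2 \,\frac{\epsilon_{n+1}}{\alpha^{n+1}}
\end{equation*}
for constants $C_1,C_2>0$ depending only on $s_0,\kappa,a,b,q$.

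Iterating, and noting that $\sum_n q^n<\infty$ makes $M:=\prod_{n\ge 1}(1+C_1 q^n)$ finite, one obtains
\begin{equation*}
K_n \le MK_0 + C_2 M\sum_{j=1}^{n}\frac{\epsilon_j}{\alpha^{j}}.
\end{equation*}
If $\sum_j \epsilon_j/\alpha^j<\infty$ the right side is uniformly bounded by some $K^\ast$, giving (i); by definition of $K_n$ this translates to $|z(t)|\le K^\ast t^\kappa$ on $[e^{s_0},\infty)$, i.e.\ $z(t)=\text{O}(t^\kappa)$. If the series diverges, the second term dominates $MK_0$ for $n$ large and can be absorbed into a slightly larger constant, yielding the stated bound on $K_n$ in (ii); the supremum estimate \eqref{eq.fundest2} then follows from $|z(t)|\le K_n t^\kappa$ for $t\in[T_n,T_{n+1}]$ together with $\sup_{[T_n,T_{n+1}]}t^\kappa=T_n^\kappa\le e^{\kappa s_0}\alpha^n$ (using $\kappa<0$). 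The main obstacle is producing a recurrence whose leading multiplicative factor is \emph{exactly} $1$: this is forced by the cancellation $|a|q^\kappa=|b|$, without which the iteration would blow up geometrically. The residual $1+\text{O}(q^n)$ factor from the integration by parts is rendered harmless by the geometric growth of $T_n$, and the correct scaling $\epsilon_{n+1}/\alpha^{n+1}$ for the forcing emerges automatically after the division by $t^\kappa\asymp\alpha^{n+1}$.
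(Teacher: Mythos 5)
Your argument is correct and arrives at exactly the same block recursion $K_{n+1}\le(1+O(q^{n+1}))K_n+O(\epsilon_{n+1}/\alpha^{n+1})$ as the paper, but you reach it by a more elementary route: you stay in the original time variable $t$ and run variation of constants for $z'-bz=az(q\,\cdot)+\varphi$ on the geometric block $[T_{n+1},T_{n+2}]$, whereas the paper first passes to $s=\log t$ and the gauge $w(s)=e^{-sk_0}z(e^s)$ with a (generally complex) root $k_0$ of $aq^k+b=0$, integrates the $w$-equation against the factor $e^{k_0 s-be^s}$, and only then takes moduli to reduce to the real exponent $\kappa$. The cancellation that makes the leading coefficient of $K_n$ equal to one is the same in both proofs and rests on $|a|q^\kappa=|b|$; in your version it appears as the cancellation of the boundary term $T_{n+1}^\kappa$ after integrating $\int e^{b(t-s)}s^\kappa\,ds$ by parts, and under $u=e^s$ this is precisely the integration by parts the paper performs on $\int be^te^{\kappa t+|b|e^t}\,dt$. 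What your version buys: no complex arithmetic, and no need for the paper's side condition that $s_0$ be large enough to make $t\mapsto\kappa t+|b|e^t$ monotone on each $I_n$; what it costs is a little explicit bookkeeping with $T_n=e^{s_0}q^{-n}$ and $T_n^\kappa=e^{\kappa s_0}\alpha^n$. One small point worth tightening: in part (ii), rather than arguing that the divergent series dominates $MK_0$ ``for $n$ large'', it is cleaner to note that $\sum_{j=1}^n\epsilon_j/\alpha^j\ge\epsilon_1/\alpha>0$ for every $n\ge1$, so $MK_0$ can be absorbed into a single constant $K^\ast$ uniformly in $n$.
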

	
	The proof of the result is inspired especially by that of Kato and McLeod~\cite[Theorem 3(i)]{KatoMc} and also of Cermak~\cite[Lemma 3.3]{cermak}. It differs from the former in considering perturbed equations. Concerning the latter paper and result, we note that it  considers more general perturbed delay equations, and yields estimates on their solutions using \textit{pointwise} conditions on forcing terms. However, the summability conditions in Lemma~\ref{lemma.fundamental} hint to the development of \textit{integrability} conditions on forcing terms in our analysis, which will ultimately yield extra sharpness to our results. In addition, our decomposition approach to solutions of the perturbed pantograph equations, shown in the next section, has the effect of turning pointwise conditions on the forcing terms into average conditions too, which will further sharpen our deterministic results. The decomposition approach will even enable us  exploit the fundamental lemma for stochastic equations, where (poor) pointwise asymptotic behaviour and regularity (i.e. non--differentiability)  of the forcing terms would confound the analysis in \cite{cermak}.    
	
	At this instant, the reader must take these claims on trust, since it is not immediately apparent how sharp or useful the estimate in part (ii) is, nor that it can be applied  to study stochastic equations. In fact, quite a lot of further analysis is needed so that  full value can be extracted from this lemma. Nevertheless, we will eventually show that this fundamental lemma (a) develops essentially unimprovable size estimates on the size of solutions of deterministic equations, which improve all existing asymptotic estimates; (b) allows us to give characterisations of the almost sure convergence and boundedness of stochastically perturbed equations, results hitherto absent from the literature. 
	
	We turn to these matters later. 
	However, we can immediately prove two direct corollaries of Lemma~\ref{lemma.fundamental}, under the assumption \eqref{eq.unforcedasstable}, giving some immediate reassurance that the Lemma~\ref{lemma.fundamental} will be of some utility. 
	
	\begin{theorem} \label{thm.1}
		Suppose $q\in (0,1)$ and \eqref{eq.unforcedasstable} holds. Let $\varphi\in C([0,\infty);\mathbb{R})$ and $z$ be the unique continuous solution to \eqref{eq.z}. 
		\begin{enumerate}
			\item[(a)] If $\varphi(t)\to 0$ as $t\to\infty$, then $z(t)\to 0$ as $t\to\infty$. 
			\item[(b)] If $\varphi$ is bounded, then $z$ is bounded. 
		\end{enumerate}		
	\end{theorem}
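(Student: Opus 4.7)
The plan is to derive both parts directly from part (ii) of Lemma~\ref{lemma.fundamental}, by a judicious choice of the majorising sequence $(\epsilon_n)$ and then extracting the desired asymptotic information from the geometric-type sum $\alpha^n\sum_{j=1}^n \epsilon_j/\alpha^j$. The key observation is that under \eqref{eq.unforcedasstable}, $\alpha\in(0,1)$, so the factor $\alpha^n$ effectively implements a discounted (weighted) average of the $\epsilon_j$, which is small whenever the tail of $(\epsilon_j)$ is small, and uniformly bounded whenever $(\epsilon_j)$ is bounded.

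For part (b), take $M:=\sup_{t\geq 0}|\varphi(t)|<\infty$ and set $\epsilon_n:=M$ for all $n$. Then $(\epsilon_n/\alpha^n)$ is not summable, so part (ii) of Lemma~\ref{lemma.fundamental} applies and gives, for every $n\in\mathbb{N}$,
\[
\sup_{t\in[e^{s_0-nc},\,e^{s_0-(n+1)c}]}|z(t)|\leq K^\ast\alpha^n\sum_{j=1}^n\frac{M}{\alpha^j}=K^\ast M\sum_{k=0}^{n-1}\alpha^k\leq\frac{K^\ast M}{1-\alpha}.
\]
Since $c<0$, the intervals $[e^{s_0-nc},e^{s_0-(n+1)c}]$ cover $[e^{s_0},\infty)$ as $n$ varies, and $z$ is continuous on the compact set $[0,e^{s_0}]$. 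Hence $z$ is bounded on $[0,\infty)$.

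For part (a), let $\epsilon_n:=\sup_{s\in I_n}|\varphi(e^s)|$; since $\varphi(t)\to0$ and the left endpoints $e^{s_0-nc}$ of the intervals $e^{I_n}$ tend to infinity, $\epsilon_n\to0$. If $(\epsilon_n/\alpha^n)$ happens to be summable, case (i) already gives $z(t)=\mathrm{O}(t^\kappa)=\o(1)$ (recall $\kappa<0$ when $|b|>|a|$ and $q\in(0,1)$); otherwise we invoke case (ii). Fix $\eta>0$, choose $N$ with $\epsilon_j\leq\eta(1-\alpha)/(2K^\ast)$ for $j>N$, and split
\[
K^\ast\alpha^n\sum_{j=1}^n\frac{\epsilon_j}{\alpha^j}=K^\ast\alpha^n\sum_{j=1}^{N}\frac{\epsilon_j}{\alpha^j}+K^\ast\sum_{j=N+1}^n\epsilon_j\alpha^{n-j}.
\]
The first piece is a fixed constant times $\alpha^n$, hence tends to $0$; the second is bounded by $K^\ast\cdot\tfrac{\eta(1-\alpha)}{2K^\ast}\sum_{k\geq 0}\alpha^k=\eta/2$. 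Thus the right-hand side of \eqref{eq.fundest2} is $<\eta$ for all sufficiently large $n$, and since $\eta$ was arbitrary and the intervals $[e^{s_0-nc},e^{s_0-(n+1)c}]$ exhaust $[e^{s_0},\infty)$, we conclude $z(t)\to 0$ as $t\to\infty$.

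The main obstacle is essentially bookkeeping: one must be careful that $s_0$ may be chosen freely (so the covering by intervals $e^{I_n}$ can be arranged to start beyond any prescribed threshold $T$ on which $|\varphi|$ is small), and that $K^\ast$ and the endpoint of the initial interval do not themselves depend on $n$. No further ingredient beyond Lemma~\ref{lemma.fundamental} and the geometric sum manipulation is required.
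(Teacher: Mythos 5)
Your proof is correct and follows essentially the same route as the paper's: both parts are derived from Lemma~\ref{lemma.fundamental} by choosing $\epsilon_n$ constant (part (b)) or as the supremum of $|\varphi|$ on the dyadic-like intervals (part (a)), and then controlling the discounted sum $\alpha^n\sum_j\epsilon_j/\alpha^j$. The only cosmetic difference is in part (a), where the paper invokes the Toeplitz lemma to show $\alpha^n\sum_{j=1}^n\epsilon_j\alpha^{-j}\to 0$, whereas you prove this directly by splitting the sum at a threshold $N$ and estimating the tail against the geometric series — which is just unpacking the Toeplitz argument for this special case, so there is no substantive divergence.
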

	\begin{proof}
		If $\varphi(t)\to 0$ as $t\to\infty$, we may take equality in \eqref{eq.epsn}, and note the hypothesis forces $\epsilon_n\to 0$ as $n\to\infty$. By \eqref{eq.unforcedasstable}, $\alpha\in (0,1)$. Either $\lambda_n=\epsilon_n/\alpha^n$ is summable or not. 
		
		Suppose $\lambda_n$ is summable. Then, by Lemma~\ref{lemma.fundamental} (i),  
		$\sup_{t\in [e^{s_0-nc},e^{s_0-nc}e^{-c}]} |z(t)|\leq K^\ast [e^{(s_0-nc)}]^\kappa$,
		so $z(t)\to 0$ as $t\to\infty$, because $\kappa<0$ and $c<0$. 
		
		Suppose $\lambda_n$ is not summable. Since $(1/\alpha^n)$ is divergent, by Toeplitz lemma (see \cite[p.390]{Shiryaev}), 	$\sum_{j=1}^n \epsilon_j \alpha^{-j} = o\left(\sum_{j=1}^n \alpha^{-j}\right)$ as $n\to\infty$, since $\epsilon_n\to 0$ as $n\to\infty$. On the other hand $\sum_{j=1}^n \alpha^{-j} \sim \alpha^{-n} (1-\alpha^{-1})$ as $n\to\infty$, so $\alpha^n\sum_{j=1}^n \epsilon_j \alpha^{-j}\to 0$ as $n\to\infty$.
		Since the righthand side of \eqref{eq.fundest2} tends to $0$ as $n\to\infty$, $z(t)\to 0$ as $t\to\infty$.  
		
		In the case when $\varphi$ is bounded on $[0,\infty)]$ by $\overline{\varphi}$, say, we may fix $\epsilon_n=\overline{\varphi}$. By part (i) of the lemma, which holds whether $(\epsilon_n/\alpha^n)$ is summable or not, we have that 
		$		K_n\leq K^\ast \overline{\varphi} \sum_{j=1}^n \alpha^{-j} \leq \bar{K} \alpha^{-n}$ 
		since $\alpha<1$, for some $\bar{K}>0$. Thus $K_n \alpha^n\leq \bar{K}$. Revisiting the above estimates gives  
		\[
		\sup_{t\in [e^{s_0-nc},e^{s_0-nc}e^{-c}]} |z(t)|\leq K_n \alpha^n  e^{\kappa s_0}
		\leq \bar{K}e^{\kappa s_0}=:\bar{z},
		\]
		and so $z$ is bounded, as required. 
	\end{proof}
	As we see in Section 6, this qualitative result generalises  to equations with unbounded delay, using methods from \cite{AppBuck10} (see also  Theorem~\ref{thm.monotonepert} below). But our primary interest in the fundamental Lemma derives from quantitative estimates that it generates, stated below. For these estimates, we recall that a (measurable) function $\gamma:[0,\infty)\to (0,\infty)$ is regularly varying at infinity with index $\alpha$ if $\lim_{t\to\infty} \gamma(\lambda t)/\gamma(t)=\lambda^\alpha$ for all $\lambda>0$. The notation $\gamma\in \text{RV}_\infty(\alpha)$ is used. We will exploit properties of regularly varying functions throughout this work; see Bingham, Goldie and Teugels \cite{BGT}, especially Chapter 1, for relevant results.    
	\begin{theorem} \label{thm.growthbounds2}
		Suppose that $q\in (0,1)$ and \eqref{eq.unforcedasstable} holds. Let $\varphi\in C([0,\infty);\mathbb{R})$ and $z$ be the unique continuous solution to \eqref{eq.z},  Let $\varphi(t)=\text{O}(\gamma(t))$ as $t\to\infty$, where $\gamma$ is regularly varying at infinity. Then   
		\[
		z(t)=\text{O}\left(t^\kappa\int_1^t \frac{\gamma(s)}{s^{\kappa+1}}\,ds\right), \quad t\to\infty.
		\] 
	\end{theorem}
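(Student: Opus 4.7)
The plan is to apply Lemma~\ref{lemma.fundamental} with a choice of $\epsilon_n$ derived directly from $\gamma$. Set $t_n := e^{s_0-nc} = q^{-n}e^{s_0}$, so that via $s\mapsto e^s$ the interval $I_n$ in \eqref{eq.epsn} corresponds to $[t_n, q^{-1}t_n]$. Since $\varphi(t)=O(\gamma(t))$ and $\gamma$ is regularly varying, the uniform convergence theorem \cite[Theorem 1.2.1]{BGT} delivers $C>0$ and $n_0\in\mathbb{N}$ with $\sup_{t\in [t_n, q^{-1}t_n]} |\varphi(t)| \le C\gamma(t_n)$ for $n\ge n_0$, so $\epsilon_n := C\gamma(t_n)$ is admissible in \eqref{eq.epsn}.

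The key algebraic identity $\alpha = q^{-\kappa}$ follows at once from \eqref{eq.kappa} and \eqref{eq.alpha}; combined with $c=\log q$ it gives
\[
\frac{\epsilon_j}{\alpha^j} = Ce^{\kappa s_0}\, \gamma(t_j)\, t_j^{-\kappa}, \qquad \alpha^n = e^{-\kappa s_0}\, t_n^\kappa.
\]
Because $s\mapsto \gamma(s)\, s^{-\kappa-1}$ is regularly varying and the intervals $[t_{j-1},t_j]$ have fixed ratio $1/q$, uniform convergence on the compact ratio set $[q,1]$ yields positive constants $c_1, c_2$ and a fixed offset $D$, independent of $n$, with
\[
c_1 \sum_{j=1}^n \gamma(t_j)\, t_j^{-\kappa} - D \;\le\; \int_1^{t_n} \frac{\gamma(s)}{s^{\kappa+1}}\,ds \;\le\; c_2 \sum_{j=1}^n \gamma(t_j)\, t_j^{-\kappa} + D.
\]
In particular, summability of $(\epsilon_n/\alpha^n)$ is equivalent to finiteness of $\int_1^\infty \gamma(s) s^{-\kappa-1}\,ds$.

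Now invoke the dichotomy of Lemma~\ref{lemma.fundamental}. In case (i), $z(t)=O(t^\kappa)$, and since the positive, nondecreasing function $t\mapsto \int_1^t \gamma(s) s^{-\kappa-1}\,ds$ is bounded below by a positive constant for large $t$, the target estimate follows. In case (ii), part (ii) of the lemma together with $\alpha^n = e^{-\kappa s_0}t_n^\kappa$ gives
\[
\sup_{t\in [t_n, q^{-1}t_n]} |z(t)| \le K^{\ast} \alpha^n \sum_{j=1}^n \frac{\epsilon_j}{\alpha^j} \le C'\, t_n^{\kappa} \int_1^{t_n} \frac{\gamma(s)}{s^{\kappa+1}}\,ds.
\]
For $t\in [t_n, q^{-1}t_n]$ one has $t_n^\kappa \le q^\kappa t^\kappa$ (because $\kappa<0$ and $t/t_n \in [1,q^{-1}]$) and $\int_1^{t_n} \gamma(s) s^{-\kappa-1}\,ds \le \int_1^t \gamma(s) s^{-\kappa-1}\,ds$ by monotonicity, so the right--hand side is dominated by $q^\kappa C'\, t^\kappa \int_1^t \gamma(s) s^{-\kappa-1}\,ds$, giving the stated $O$--estimate on $[t_{n_0},\infty)$.

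The main obstacle I anticipate is securing the sum--integral comparison with constants uniform in $n$: this depends essentially on the regular variation of $\gamma$ (not on mere growth control) through the uniform convergence theorem, and one must handle both directions of the dichotomy in parallel. These steps are standard applications of the machinery of \cite[Ch.~1]{BGT} and should present no conceptual difficulty beyond bookkeeping.
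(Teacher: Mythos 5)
Your argument mirrors the paper's proof: both apply the uniform convergence theorem for regularly varying functions to bound $\epsilon_n$ by a sample of $\gamma$ on the geometric mesh $t_n = e^{s_0}q^{-n}$, exploit the identity $\alpha = q^{-\kappa}$ to rewrite $\epsilon_j/\alpha^j$ and $\alpha^n$ in terms of $t_j$ and $t_n$, use a sum--integral comparison (again via uniform convergence) to pass between $\sum_j \gamma(t_j) t_j^{-\kappa}$ and $\int_1^{t_n}\gamma(s)s^{-\kappa-1}\,ds$, and then invoke the dichotomy of Lemma~\ref{lemma.fundamental}. The paper's version works with the auxiliary function $\psi(t)=\gamma(t)/t^{\kappa+1}$ and organises the argument into subcases according to its index of regular variation, but the underlying mechanism and estimates are the same as yours, so this is essentially the paper's proof in slightly different notation.
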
 
	The proof is deferred to the end. Since the theorem gives only bounds on solutions, it is reasonable to ask whether (i) the bounds are sharp; (ii) the transition from the rate of decay of the  solutions of \eqref{eq.unforced} to slower decay rates, do indeed occur when $t\mapsto\varphi(t)/t^{\kappa+1}$ changes from being an integrable to a non--integrable function. The following example shows that Theorem \ref{thm.growthbounds2} is sharp in both regards. Furthermore, in Section 4, we give another example which shows that the ``big O'' estimate cannot be improved in general to results of the form $z(t)\sim c t^\kappa$ as $t\to\infty$, at least in the case of small perturbations (i.e. when $t\mapsto \gamma(t)/t^{\kappa+1}$ is integrable).     
	\begin{theorem} \label{thm.growthbdssharp}
		Let $a>0$, $b<0$, $\varphi\in C(\mathbb{R}^+)$ and $z$ the continuous solution to \eqref{eq.z}.
		\begin{enumerate}
			\item[(a)] For every $C>0$ and every  $\gamma\in \text{RV}_\infty(\kappa)$ such that $t\mapsto \gamma(t)/t^{\kappa+1}$ is not in $L^1(0,\infty)$, there exists $\varphi(t)\sim C\gamma(t)$ such that for some $D>0$,
			\[
			z(t)\sim Dt^\kappa\int_1^t \frac{\gamma(s)}{s^{\kappa+1}}\,ds, \quad t\to\infty.
			\] 
			\item[(b)] For every $C>0$ and every  $\gamma\in \text{RV}_\infty(\kappa)$ such that $t\mapsto \gamma(t)/t^{\kappa+1}$ is in $L^1(0,\infty)$, there exists $\varphi(t)\sim C\gamma(t)$ such that $z(t)\sim Dt^\kappa$ as $t\to\infty$ 
			for some $D>0$.
		\end{enumerate}
	\end{theorem}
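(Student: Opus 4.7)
In each part the plan is to exhibit an explicit candidate $z$ with the required asymptotic profile, read off the forcing $\varphi$ from the pantograph equation itself, and verify $\varphi\sim C\gamma$ using the calculus of regularly varying functions. The key structural fact is that, since $a>0$ and $b<0$ with $|b|>a$, the number $\kappa$ given by \eqref{eq.kappa} is the unique real root of $aq^{\kappa}+b=0$, and this identity will collapse the bulk contribution of $H(t)$ in the computation below.

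For $t\ge 1$ set $z(t):=Dt^\kappa H(t)$ with $D>0$ a constant to be fixed, and
\[
H(t):=\int_1^t \gamma(s)/s^{\kappa+1}\,ds \ \text{in case (a)},\qquad H(t):=1-\int_t^\infty \gamma(s)/s^{\kappa+1}\,ds \ \text{in case (b)}.
\]
By Karamata's theorem, $H$ is slowly varying in each case, with $H(t)\to +\infty$ in (a) and $H(t)\to 1$ in (b). Extend $z$ to any $C^1$ function on $[0,1]$ joining a chosen initial value $\zeta$ smoothly to $z(1)$ and $z'(1)$, and then \emph{define}
\[
\varphi(t):=z'(t)-az(qt)-bz(t),\qquad t\ge 0.
\]
This $\varphi$ lies in $C([0,\infty);\R)$, and by uniqueness of the continuous solution to \eqref{eq.z}, this $z$ is precisely the solution of \eqref{eq.z} with this forcing.

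Using $aq^\kappa+b=0$, a short computation for $t\ge 1/q$ yields
\[
\varphi(t)=D\kappa t^{\kappa-1}H(t)+Dt^\kappa H'(t)+Daq^\kappa t^\kappa\bigl[H(t)-H(qt)\bigr].
\]
The first term is a slowly varying factor times $t^{\kappa-1}$, and hence $o(\gamma(t))$ by Potter's bounds; the second equals $D\gamma(t)/t=o(\gamma(t))$ in both cases. The third is dominant: since $H(t)-H(qt)=\int_{qt}^t \gamma(s)/s^{\kappa+1}\,ds$, uniform convergence of $\gamma(tu)/\gamma(t)\to u^\kappa$ on $[q,1]$ (see~\cite{BGT}) gives
\[
H(t)-H(qt)\sim \frac{\gamma(t)}{t^\kappa}\int_q^1 \frac{du}{u}=\log(1/q)\,\frac{\gamma(t)}{t^\kappa}.
\]
Hence $\varphi(t)\sim Daq^\kappa\log(1/q)\gamma(t)=D|b|\log(1/q)\gamma(t)$, and setting $D:=C/(|b|\log(1/q))>0$ gives $\varphi\sim C\gamma$. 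The asymptotic of $z$ then follows at once from $z(t)=Dt^\kappa H(t)$, matching the claim in each case. The only genuinely technical step is the asymptotic for $H(t)-H(qt)$ just displayed; once this is in hand the verification is routine algebra, and the construction transparently exhibits the sharpness of Theorem~\ref{thm.growthbounds2}, with the phase transition occurring exactly at the integrability threshold for $\gamma(s)/s^{\kappa+1}$.
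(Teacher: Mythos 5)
Your proof is correct and takes essentially the same approach as the paper: posit $z(t)=Dt^\kappa H(t)$ for a suitable slowly varying $H$, read off $\varphi$ from the equation, use the identity $aq^\kappa+b=0$ to reduce the delay terms to $bDt^\kappa[H(t)-H(qt)]$, and invoke the uniform convergence theorem for regularly varying functions to identify this as the dominant contribution $\sim D|b|\log(1/q)\gamma(t)$. The paper's proof is identical in structure (it works with $\psi=\gamma/t^{\kappa+1}$ and $\Psi=\int\psi$ in place of your $H$), and the one detail you gloss over — that a generic $\gamma\in\text{RV}_\infty(\kappa)$ need only be measurable, so one should first replace it by a smooth asymptotically equivalent version before defining $H$ and differentiating — is handled in the paper by the explicit sentence ``We may take $\psi$ to be a smooth regularly varying function.''
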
  
	\begin{proof}
		For part (a), write $\psi(t):=\gamma(t)/t^{\kappa+1}$. Then $\psi\in \text{RV}_\infty(-1)$ and $\psi$ is not integrable. We may take $\psi$ to be a smooth regularly varying function. Let $\Psi(t):=\int_{1/q}^t \psi(s)\,ds$.
		Then $\Psi$ is regularly varying with index zero, and $\Psi(t)\to \infty$ as $t\to\infty$. Moreover, $\Psi$ is smoothly regularly varying, with $t\Psi'(t)/\Psi(t)\to 0$ as $t\to\infty$. Set $z(t):=Dt^\kappa \Psi(t)\,t\geq q$ and $\varphi(t):=z'(t)-bz(t)-az(qt), \,t\geq 1$. Then $z$ satisfies \eqref{eq.forced} on $[1,\infty)$. Moreover  
		$z'(t)=Dt^{\kappa-1}\Psi(t)(\kappa+t\Psi'(t)/\Psi(t))\sim \kappa Dt^{\kappa-1}\Psi(t)$ as $t\to\infty$. 	For $t\geq 1$, using the fact that $b+aq^\kappa=0$, we have that
		$bz(t)+az(qt) = bDt^\kappa(\Psi(t)-\Psi(qt))$. Compare the asymptotic behaviour of $z'$ and the last term, to get the dominant asymptotic behaviour in $\varphi$.  $\Psi'=\psi$ is regularly varying with index $-1$, the uniform convergence theorem for regularly varying functions gives 
		\[
		\frac{\Psi(t)-\Psi(qt)}{t\Psi'(t)}=\int_q^1 \frac{\Psi'(\lambda t)}{\Psi'(t)}\,d\lambda
		\to \int_q^1 \lambda^{-1}\,d\lambda = \log(1/q), \quad t\to\infty.
		\]
		Hence 
		$bz(t)+az(qt) \sim b \log(1/q) D t^{\kappa+1} \psi(t)$ as $t\to\infty$, 
		so this expression is in $RV_{\infty}(\kappa)$, while $z'$ is in $RV_{\infty}(\kappa-1)$. Therefore $z'(t)=o(bz(t)+az(qt))$, and so  we have
		$\varphi(t)\sim -(bz(t)+az(qt)) \sim C t^{\kappa+1} \psi(t)$, where $C=-b \log(1/q) D>0$.
		
		For part (b), let $\psi$ be a smooth regularly varying function with index $-1$. Then $\Psi$ is regularly varying with index zero, but now since $\psi$ is integrable $\Psi(t)\to L\in (0,\infty)$ as $t\to\infty$. Moreover, $\Psi$ is also smoothly regularly varying, and so $t\Psi'(t)/\Psi(t)\to 0$ as $t\to\infty$. Next let $D>0$ be arbitrary and set
		\[
		z(t)=\frac{D}{L+1}\left(t^\kappa + t^\kappa\int_0^t \psi(s)\,ds\right), \quad t\geq q.
		\]
		Set $\varphi(t)=z'(t)-(bz(t)+az(qt))$ for $t\geq 1$. Then $z$ is the unique continuous solution of \eqref{eq.forced} on $[1,\infty)$ and $z(t)\sim Dt^\kappa$ as $t\to\infty$. We determine the asymptotic behaviour of $f$. Write $\tilde{D}=D/(L+1)$. To start, note $z'(t)=\tilde{D}\kappa t^{\kappa-1}\left(\kappa+ \kappa \Psi(t) +  t\psi(t)\right)$. 
		Since $t\psi(t)=t\Psi'(t)/\Psi(t)\cdot \Psi(t)\to 0\cdot L$=0 as $t\to\infty$, $z'(t)$ is
		order $t^{\kappa-1}$ as $t\to\infty$. Next
		\[
		bz(t)+az(qt) = b\tilde{D} t^\kappa \int_{qt}^t \psi(s)\,ds,
		\]
		so, proceeding as before we get 
		$bz(t)+az(qt) \sim b \tilde{D}t^{\kappa+1} \psi(t)\log(1/q)$ as $t\to\infty$, which is regularly varying with index $\kappa$, and dominates $z'(t)$. Letting  $C:=-bD(L+1)^{-1}\log(1/q)>0$, gives  $\varphi(t)\sim C t^{\kappa+1} \psi(t)=C\gamma(t)$ as $t\to\infty$, as needed.  
	\end{proof}
	Perturbations with decay faster than a power law  are covered by Theorem~\ref{thm.growthbounds2}: if $\varphi(t)=o(t^{-M})$ for all $M>0$, then $z(t)=O(t^\kappa)$, by taking $\gamma(t)=t^{\kappa-2}$. 
	
	For larger perturbations that may not have a power--law type form, the following bound can be established directly, without recourse to the fundamental lemma. 
	
	\begin{theorem} \label{thm.monotonepert}
		Suppose that $q\in (0,1)$ and \eqref{eq.unforcedasstable} holds. Let $\varphi\in C([0,\infty);\mathbb{R})$ and $z$ be the unique continuous solution to \eqref{eq.z}. Let $\gamma$ be  $C^1$ and non--decreasing.
		\begin{enumerate}
			\item[(A)]
			If $\varphi(t)=\text{O}(\gamma(t))$ as $t\to\infty$, then $z(t)=\text{O}(\gamma(t))$, as $t\to\infty$. 
			\item[(B)] If $\varphi(t)=\text{o}(\gamma(t))$ and $\gamma(t)\to \infty$ as $t\to\infty$, then $z(t)=\text{o}(\gamma(t))$, as $t\to\infty$. 
		\end{enumerate}
	\end{theorem}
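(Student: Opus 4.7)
The plan is to bypass Lemma~\ref{lemma.fundamental} entirely and argue directly from the integral representation obtained by treating the $bz(t)$ term via variation of constants. With $\beta:=|b|>0$ and $\alpha:=|a|/|b|\in(0,1)$ (guaranteed by \eqref{eq.unforcedasstable}), the equation \eqref{eq.z} yields
\[
|z(t)| \leq e^{-\beta t}|z(0)| + |a|\int_0^t e^{-\beta(t-s)}|z(qs)|\,ds + \int_0^t e^{-\beta(t-s)}|\varphi(s)|\,ds,
\]
and the single elementary consequence of $\gamma$ being non-decreasing that I will use repeatedly is
\[
\int_0^t e^{-\beta(t-s)}\gamma(s)\,ds \leq \gamma(t)/\beta.
\]
Throughout I will silently assume $\gamma$ is eventually positive (otherwise the ``$O(\gamma)$'' hypothesis is empty or trivial), working on $[t_0,\infty)$ if necessary.

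For part (A), after possibly enlarging $M$ on a compact initial interval I may take $|\varphi(t)|\leq M\gamma(t)$ for all $t\geq 0$. Introduce $W(t):=\sup_{0\leq s\leq t}|z(s)|/\gamma(s)$ and use monotonicity of $\gamma$ together with $qs\leq s$ to bound $|z(qs)|\leq W(t)\gamma(qs)\leq W(t)\gamma(s)$ inside the first integral. Combining with the boxed inequality above, dividing by $\gamma(r)$, and taking the supremum over $r\in[0,t]$, yields the contraction-type estimate
\[
W(t)\leq |z(0)|/\gamma(0) + \alpha\,W(t)+M/\beta.
\]
Because $\alpha<1$, this uniformly bounds $W$, which is precisely the statement $z(t)=O(\gamma(t))$.

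For part (B), I would first invoke part (A) to place $\ell:=\limsup_{t\to\infty}|z(t)|/\gamma(t)<\infty$, and then refine. Given $\delta>0$, pick $T$ so large that $|\varphi(s)|\leq \delta\gamma(s)$ for $s\geq T$ and $|z(s)|\leq (\ell+\delta)\gamma(s)$ for $s\geq T$. Splitting the two integrals at $T$ and at $T/q$ respectively, the two initial pieces contribute only constants multiplied by $e^{-\beta t}$ (since $\varphi$ and $z$ are bounded on $[0,T/q]$), which is $o(\gamma(t))$ precisely because $\gamma(t)\to\infty$. On the tails, the monotone-$\gamma$ integral inequality gives
\[
|z(t)| \leq o(\gamma(t)) + \alpha(\ell+\delta)\gamma(t) + \tfrac{\delta}{\beta}\gamma(t).
\]
Dividing by $\gamma(t)$, taking $\limsup$, and letting $\delta\downarrow 0$ produces $\ell\leq \alpha\ell$, so $\ell=0$; that is, $z(t)=o(\gamma(t))$.

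The argument is essentially mechanical once the monotone-$\gamma$ integral bound and the contraction factor $\alpha<1$ are in hand, and no resolvent, variation-of-constants formula for the full pantograph equation, or appeal to Lemma~\ref{lemma.fundamental} is required. The only point that needs any real care is the step in (B) where the initial-segment contributions are absorbed into $o(\gamma(t))$, which is the unique place the hypothesis $\gamma(t)\to\infty$ is used and is the reason it appears in (B) but not (A).
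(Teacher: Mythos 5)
Your proof is correct, and it takes a genuinely different route from the paper's. The paper's proof of Theorem~\ref{thm.monotonepert} works on the \emph{differential} side: it passes to the Dini derivative inequality $D_+|z(t)|\le b|z(t)|+|a||z(qt)|+|\varphi(t)|$ and then, in the spirit of \cite{AppBuck10,AppBuck2016}, manufactures an explicit $C^1$ supersolution ($z_+(t)=K\gamma(t)$ in part~(A), $z_\epsilon(t)=\epsilon\gamma(t)/(|b|-|a|)+C(\epsilon)$ in part~(B)) and invokes a pointwise comparison theorem. You instead pass to the \emph{integral} inequality obtained by variation of constants on the instantaneous term $bz$ alone, and close the estimate via a running weighted supremum $W(t)=\sup_{0\le s\le t}|z(s)|/\gamma(s)$, using $\int_0^t e^{-\beta(t-s)}\gamma(s)\,ds\le\gamma(t)/\beta$ and the strict contraction factor $\alpha=|a|/|b|<1$. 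Both proofs bypass Lemma~\ref{lemma.fundamental} (the paper explicitly says it establishes this theorem ``directly, without recourse to the fundamental lemma''), but yours replaces a hand-built supersolution with a fixed--point/Gronwall--type bootstrap; the price is a one-line remark that $W(t)<\infty$ for each finite $t$ (continuity of $z$ and positivity of $\gamma$ on compacts), which is needed before the inequality $W(t)\le\text{const}+\alpha W(t)$ can be rearranged. One small remark on your last paragraph: the initial-segment contributions in part~(B) are $O(e^{-\beta t})$, which is already $o(\gamma(t))$ once $\gamma$ is eventually bounded away from zero (as any positive non-decreasing $\gamma$ is). So your argument does not actually use the hypothesis $\gamma(t)\to\infty$; that hypothesis is needed by the paper's proof to kill the constant $C(\epsilon)$ after dividing by $\gamma(t)$, but your version establishes a marginally stronger statement (and, taking $\gamma\equiv 1$, recovers Theorem~\ref{thm.1}(a) as a byproduct).
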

	\begin{proof} The proof is inspired by those in \cite{AppBuck10,AppBuck2016}. 
		Denote by 
		$D_+ u(t) = \limsup_{h\to 0^+} (u(t+h)-u(t))/h$ 
		the Dini derivative of any continuous function $u$. Since $b<0$, by considering the cases where $z(t)$ is positive, negative or zero, we arrive at the consolidated estimate 
		$D_+|z(t)|\leq b|z(t)|+ |a||z(qt)|+|\varphi(t)|$ for $t\geq 0$.
		We note that if $z_+$ is any positive $C^1(0,\infty)$ function satisfying $z_+(0)>|z(0)|$ and $z_+'(t)> bz+(t)+ |a|z_+(qt)+|\varphi(t)|$ for all $t\geq 0$, then $z_+(t)>|z(t)|$ for all $t\geq 0$, by standard comparison arguments. By hypothesis, there exists $C>0$ such that $|\varphi(t)|\leq C\gamma(t)$ for $t\geq 0$. Take $K>\max\left(C/(|b|-|a|), |z(0)|/\gamma(0)\right)$,
		and set $z_+(t)=K\gamma(t)$ for $t\geq 0$. Then $z_+(0)=K\gamma(0)> |z(0)|$ and for $t\geq 0$ we have 
		\begin{align*}
			bz_+(t)+|a|z_+(qt)+|\varphi(t)|&\leq bK\gamma(t)+|a|K\gamma(qt)+C\gamma(t)\\
			&\leq (bK+|a|K+C)\gamma(t)<0<K\gamma'(t)=z_+'(t),
		\end{align*}
		using the monotonicity of $\gamma$. Thus $|z(t)|< z_+(t)=K\gamma(t)$ for all $t\geq 0$, as needed. 
		
		For part (B), by hypothesis, we have $|\varphi(t)|<\epsilon \gamma(t), \, t\geq T_1(\epsilon)$. For $t\geq qT_1(\epsilon)$, define $z_\epsilon(t)=\epsilon \gamma(t)/(|b|-|a|)+C(\epsilon)$ 
		where $C(\epsilon):=1+\max_{t\in [qT_1,T_1]}|z(s)|>0$. Then $z_\epsilon(t)>|z(t)|$ for $t\in [qT_1,T_1]$. For $t\geq T_1$ we have
		\begin{align*}
			bz_\epsilon(t)+|a|z_\epsilon(qt)+|\varphi(t)|
			&=(b+|a|)C	+ \epsilon\left(\frac{b}{|b|-|a|}\gamma(t)+\frac{|a|}{|b|-|a|}\gamma(qt)\right)+|\varphi(t)|\\
			&\leq (b+|a|)C	+ \epsilon\frac{b+|a|}{|b|-|a|}\gamma(t)+\epsilon \gamma(t)\\
			&= (b+|a|)C<0<\frac{1}{|b|-|a|}\epsilon \gamma'(t)=z_\epsilon'(t).
		\end{align*}
		Thus $|z(t)|<z_\epsilon(t) = \epsilon \gamma(t)/(|b|-|a|)+C(\epsilon)$ for $t\geq T_1(\epsilon)$. Dividing by $\gamma(t)$ on both sides, letting $t\to\infty$, and then $\epsilon\to 0$, we get the desired result. 
	\end{proof}
	It is easy to show that these bounds are sharp in at least some cases by considering an example. Suppose that $\gamma'(t)/\gamma(t)\to 0$ as $t\to\infty$ and $t\gamma'(t)/\gamma(t)\to \infty$ as $t\to\infty$. Then $\gamma$ is increasing subexponentially (by the first limit, see \eqref{eq.psisub} and the discussion thereafter), but faster than any power, by the second. Thus $\gamma(qt)/\gamma(t)\to 0$ as $t\to\infty$. 
	Let $z(t)=D\gamma(t)$ for $t\geq 0$, and define 
	$\varphi(t)=D[\gamma'(t)-b\gamma(t)-a\gamma(qt)]$ for $t\geq 0$. Then $z$ obeys \eqref{eq.z}, $\varphi(t)\sim -bD\gamma(t)$ as $t\to\infty$, and $z(t)\sim D\gamma(t)$ as $t\to\infty$, confirming the sharpness in part (a). 
	
	The ``little--o'' result in part (B) of Theorem~\ref{thm.monotonepert} is less traditional, and the reader may query its  value. However, we will later show that such results can be used in conjunction with the conventional ``big--O'' results to characterise when solutions obey $\limsup_{t\to\infty}|z(t)|/\gamma(t)\in (0,\infty)$, thereby establishing the exact asymptotic order of the running maxima of solutions.  
	
	We note that Theorem~\ref{thm.monotonepert} \textit{does not} give results where an exact rate of growth of the solution is given e.g., $z(t)\sim c\gamma(t)$ as $t\to\infty$, where $\gamma(t)\to \infty$ as $t\to\infty$. To give conditions on the forcing terms under which this would occur is also of interest, and will form part of a future investigation.    

	\section{Deterministically forced equation: first results, auxiliary ODEs}
	Theorem \ref{thm.1} shows that $f(t)\to 0$ as $t\to\infty$ is a \textit{sufficient} condition in order for the solution $x$ of  \eqref{eq.forced} to obey $x(t)\to 0$ as $t\to\infty$. With a little extra work, it can be used to develop a necessary and sufficient condition. Consider  
	\begin{equation} \label{eq.y}
		y'(t)=-y(t)+f(t), \quad t\geq 0; \quad y(0)=0.
	\end{equation}  
	\eqref{eq.y} has a unique continuous solution, given by 
	$y(t)=\int_0^t e^{-(t-s)}f(s)\,ds$ for $t\geq 0$. 
	
	We now show that the asymptotic behaviour of \eqref{eq.forced} is determined solely by  $y$.
	\begin{theorem} \label{thm.detasymptoticstab}
		Suppose that $q\in (0,1)$ and \eqref{eq.unforcedasstable} holds. Let $f\in C([0,\infty);\mathbb{R})$ and $x$ be the unique continuous solution to  \eqref{eq.forced}. Then the following are equivalent
		\begin{enumerate}
			\item[(A)] $y(t)\to 0$ as $t\to\infty$;
			\item[(B)] For every $\theta>0$, $\lim_{t\to\infty} \int_t^{t+\theta} f(s)\,ds =0$;
			\item[(C)] $x(t)\to 0$ as $t\to\infty$. 
		\end{enumerate}
	\end{theorem}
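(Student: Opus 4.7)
The plan is to organise the proof around the chain (A) $\Rightarrow$ (C) $\Rightarrow$ (A), handling the purely ODE equivalence (A) $\Leftrightarrow$ (B) separately, since it does not involve the pantograph structure at all. The equivalence (A) $\Leftrightarrow$ (B) is a statement about the scalar equation \eqref{eq.y} and follows from the ODE analysis of \cite{AppLaw2024ODE} (in the spirit of \eqref{eq.fthetagammabdd}, with ``bounded by $K\gamma(t)$'' replaced by ``tends to zero''). The direction (A) $\Rightarrow$ (B) is immediate: integrating $y'+y=f$ over $[t,t+\theta]$ gives
\[
\int_t^{t+\theta} f(s)\,ds = y(t+\theta) - y(t) + \int_t^{t+\theta} y(s)\,ds,
\]
and all three terms on the right vanish in the limit when $y(t)\to 0$; the reverse direction is the more delicate piece and I would cite the prior ODE work rather than reprove it.

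For (A) $\Rightarrow$ (C), I would introduce $w(t):=x(t)-y(t)$ with $w(0)=\zeta$. Subtracting \eqref{eq.y} from \eqref{eq.forced} and using $x=w+y$ yields
\[
w'(t) = a w(qt) + b w(t) + \varphi(t), \quad t \ge 0, \quad \varphi(t) := a y(qt) + (b+1)y(t).
\]
This is a perturbed pantograph equation of exactly the form \eqref{eq.z} with continuous forcing $\varphi$. Under hypothesis (A), $y(t)\to 0$ and $y(qt)\to 0$, so $\varphi(t)\to 0$, and Theorem~\ref{thm.1}(a) applied to $w$ gives $w(t)\to 0$. Consequently $x = w+y\to 0$, which is (C).

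For (C) $\Rightarrow$ (A), I would start from the variation of constants representation $y(t)=\int_0^t e^{-(t-s)}f(s)\,ds$, substitute the expression $f(s)=x'(s) - a x(qs) - b x(s)$ from \eqref{eq.forced}, and integrate by parts in the $x'$-term (noting that $x'$ is continuous because $x$ is) to obtain
\[
y(t) = x(t) - e^{-t}\zeta - (1+b)\int_0^t e^{-(t-s)} x(s)\,ds - a\int_0^t e^{-(t-s)} x(qs)\,ds.
\]
Assuming (C), the functions $s\mapsto x(s)$ and $s\mapsto x(qs)$ are bounded and tend to zero at infinity. Since the kernel $r\mapsto e^{-r}$ is in $L^1(0,\infty)$, a standard fading memory argument (split each convolution at $s=t/2$ and use boundedness on the early part, smallness of $x$ on the late part) shows that both convolution integrals tend to zero. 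Combined with $x(t)\to 0$ and $e^{-t}\zeta\to 0$, this gives $y(t)\to 0$, which is (A).

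The main obstacle is conceptual rather than technical: one must spot that the affine decomposition $x=w+y$ simultaneously (i) converts the possibly rough forcing $f$ into a forcing $\varphi$ built from $y$, which is already known to decay under (A), and (ii) makes the remainder equation a pantograph equation to which Theorem~\ref{thm.1}(a) directly applies. Once this is in place, the remaining work reduces to routine manipulations (integration by parts, the $L^1$ convolution fading-memory lemma), and the external ingredient (B) $\Rightarrow$ (A) for the scalar ODE is imported from \cite{AppLaw2024ODE}.
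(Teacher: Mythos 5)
Your proposal is correct and takes essentially the same route as the paper: the affine decomposition $w=x-y$ (the paper calls it $z$) with $\varphi(t)=ay(qt)+(b+1)y(t)$ and Theorem~\ref{thm.1}(a) for (A)$\Rightarrow$(C), and the representation $y(t)=x(t)-\xi e^{-t}-\int_0^t e^{-(t-s)}\{(1+b)x(s)+ax(qs)\}\,ds$ for (C)$\Rightarrow$(A). Your derivation of that representation (substituting $f=x'-ax(q\cdot)-bx$ and integrating by parts) and the paper's (rewriting $x'=-x+g$ and applying variation of constants) are interchangeable, and your treatment of (A)$\Leftrightarrow$(B)---proving the easy direction by integration and citing the ODE literature for the converse---matches the paper's.
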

	\begin{proof}
		The equivalence of (A) and (B) is known; we show (A) and (C) are equivalent.
		Suppose (A) holds. Set $z(t)=x(t)-y(t)$ for $t\geq 0$, $z(0)=x(0)=\zeta$. Define 
		\begin{equation} \label{def.varphi1}
			\varphi(t)=ay(qt)+(b+1)y(t), \quad t\geq 0. 
		\end{equation}
		Note that $\varphi$ is continuous. Since $z'=x'-y'$, we have that $z$ obeys \eqref{eq.z}. 
		If (A) holds, $\varphi(t)\to 0$ as $t\to\infty$. Since $z$ obeys \eqref{eq.z}, we may apply Theorem \ref{thm.1} part (a) to it, so $z(t)\to 0$ as $t\to\infty$. But $y(t)\to 0$ as $t\to\infty$ and $x=y+z$, so (C) follows. 
		
		To show (C) implies (A), re--write \eqref{eq.forced} according to  
		$x'(t) = -x(t) + \{x(t)+bx(t)+ax(qt)\} + f(t)$ for $t\geq 0$. Let $g(t)$ be the sum of the last two terms; then $x'=-x+g$ and thus integrating by parts and rearranging we get 
		\begin{equation} \label{eq.yrepx}
			y(t)=x(t)-\xi e^{-t} - \int_0^t e^{-(t-s)} \{(1+b)x(s)+ax(qs)\}\,ds, \quad t\geq 0.
		\end{equation}
		Since (C) holds, the first two terms on the righthand side of \eqref{eq.yrepx} tend to zero as $t\to\infty$. By (C) the term in curly brackets in the integrand tends to zero as $s\to\infty$. The integral in \eqref{eq.yrepx} is  the convolution of an $L^1(0,\infty)$ function and a function in $BC_0(0,\infty)$, so it tends to $0$ as $t\to\infty$. Thus $y(t)\to 0$ as $t\to\infty$, proving (A).    
	\end{proof}
	We note that this appears to be the first instance in the literature of a result which \textit{characterises} conditions on perturbations which preserve the asymptotic stability of non--autonomous functional differential equations. The same remark can be made concerning boundedness; see Theorem~\ref{thm.detasymptoticbound} later. Sufficient conditions for which convergence  to the equilibrium are preserved, which already appear in the literature, include $f(t)\to 0$ as $t\to\infty$ or $f\in L^1(0,\infty)$ (we note that both imply $y(t)\to 0$ as $t\to\infty$), but Theorem~\ref{thm.detasymptoticstab} shows that neither are necessary. 
	
	This leads us to ask what functions $f$ allow $y(t)\to 0$ as $t\to\infty$ which do not satisfy the familiar stability conditions above. This matter has been explored somewhat in \cite{AL:2023(AppliedNumMath)}. Two classes of functions with bad ``pointwise'' behaviour, but good ``average'' behaviour naturally emerge, but we do not claim that these characterise all the functions with stability preserving properties. 
	
	The first class are functions which exhibit high frequency oscillation (even with unbounded amplitude, provided the frequency grows faster than the amplitude). A prototype for such a function is 
	\[
	f(t)= e^{\beta t}\sin(e^{\theta t}), \quad t\geq 0,
	\]  
	where $\theta>\beta>0$. This function, by most measures, is ill--behaved as $t\to\infty$, oscillating between exponentially growing upper and lower envelopes, with increasingly higher frequency. Obviously, $f(t)=O(e^{\beta t})$ as $t\to\infty$, yet $y(t)=O(\max(e^{-t},e^{-(\theta-\beta)t}))$ as $t\to\infty$. Therefore, as $y(t)\to 0$ as $t\to\infty$, we have that $x(t)\to 0$ as $t\to\infty$. 
	
	Another class of functions $f$ which give rise to convergence of $y$, and which do not satisfy $f(t)\to 0$ as $t\to\infty$, or $f\in L^1(\mathbb{R}^+)$, are nonnegative functions with unboundedly large ``spikes'' of very short duration. Notice that if $f$ is a non--negative function, the condition (B) in Theorem~\ref{thm.detasymptoticstab} is equivalent to 
	\[
	\int_n^{n+1} f(s)\,ds \to 0, \quad n\to\infty,
	\]  
	where $n$ tends to infinity through the integers. We construct such a function now; let let $n\in \mathbb{N}$, and suppose $0<w_n<1$ and $w_n\to 0$ as $n\to\infty$. Let $f$ be zero on $[n,n+1]$ except on the interval $[n+1/2-w_n/2,n+1/2+w_n/2]$. Let $f$ be linear on $[n+1/2-w_n/2,n+1/2]$, passing through the points $(n+1/2-w_n/2,0)$ and $(n+1/2,h_n)$ where $h_n>0$ and $h_n\to\infty$ as $n\to\infty$, and likewise let $f$ be linear on  $[n+1/2,n+1/2+w_n/2]$, passing through the points  $(n+1/2,h_n)$ and $(n+1/2+w_n/2,0)$. Then $f$ is continuous and nonnegative and obeys 
	\[
	\int_{n}^{n+1} f(s)\,ds  = \frac{1}{2}w_n h_n.
	\] 
	Clearly, $\max_{n\leq s\leq n+1} f(s) = h_n$, so if we arrange that $h_n\to \infty$ as $n\to\infty$, while $w_n h_n\to 0$ as $n\to\infty$, we have a function which satisfies condition (B), and obeys $\limsup_{t\to\infty} f(t)=+\infty$. Indeed, since $h_n$ can be chosen to grow arbitrarily quickly, the running maximum of $f$ can grow arbitrarily quickly in time. It is also easy to arrange that $f$ is not in $L^1(\mathbb{R}^+)$, since for any
	$N\in \mathbb{N}$ we have 
	\[
	\int_0^{N+1} |f(s)|\,ds = \frac{1}{2} \sum_{n=0}^N w_n h_n,
	\]
	so it is merely necessary to arrange for $w_n h_n$ to be divergent. Thus, we may have $f$ having partial maxima which grow arbitrarily fast (let us take $h_n\to \infty$ as $n\to\infty$ in any way we choose such that  $h_n>1$ and $h$ increasing); picking $w_n=\frac{1}{nh_n}$, we have that  $0<w_n<1$, $w_n\to 0$ as $n\to\infty$, condition (B) holds, but $\limsup_{t\to\infty} f(t)=+\infty$ and $f\not\in L^1(\mathbb{R}^+)$. Thus, for such $f$, we have that $y(t)\to 0$ as $t\to\infty$, and hence $x(t)\to 0$ as $t\to\infty$.   
	
	Theorem~\ref{thm.detasymptoticstab} motivates a characterisation of the asymptotic behaviour of the linear differential equation \eqref{eq.y} given in \cite{AppLaw2024ODE}, from which we will presently draw. The conditions and arguments in \cite{AppLaw2024ODE} are inspired by analysis of Strauss and Yorke \cite{SY67a,SY67b} and by Grippenberg, London and Staffans~\cite[Lemma 15.9.2]{GLS}.  
	
	We now discuss the connection between (A) and (B) in Theorem~\ref{thm.detasymptoticstab}. The fact that (B) implies (A) is due to \cite{GLS}, while \cite{SY67a,SY67b} established the necessity and sufficiency of (B), albeit with extra uniform (in $\theta$) convergence required. As pointed out in \cite{AL:2023(AppliedNumMath)}, it is easy to see (A) implies (B) by integrating across \eqref{eq.y} and re--arranging: 
	\begin{equation}\label{eq.aveRepy}
		\int_{t-\theta}^t f(s)\,ds = y(t) - y(t-\theta) +\int_{t-\theta}^t y(s)\,ds,
	\end{equation}
	since the righthand side tends to zero as $t\to\infty$ if $y(t)$ does. 
	%
	
	We state results characterising the rate of growth or decay of solutions of \eqref{eq.y}. We allow either for an increasing upper bound, or one which grows or decays to zero subexponentially 
	(exponential or faster--than--exponential decay is not interesting, because solutions of \eqref{eq.unforced}  decay at a power law rate and swamp exponentially small terms).  
	Recall that a function is $\gamma\in C((0,\infty);(0,\infty))$ is subexponential if 
	\begin{equation} \label{eq.psisub}
		\lim_{t\to\infty} \frac{\gamma(t-\theta)}{\gamma(t)}=1, \quad\theta>0.
	\end{equation}
	This is part of a definition of subexponential functions (see e.g., \cite{BGT}). For a comprehensive discussion of properties of this class of functions in the context of differential equations of the type studied in this paper, see \cite{AppLaw2024ODE}. We list some relevant properties to our investigation now. 
	The convergence in \eqref{eq.psisub} is in fact uniform on $\theta$--compacts, so if $\Gamma(t)=\int_{t}^{t+1} \gamma(s)\,ds$, then $\Gamma(t)\sim \gamma(t)$ as $t\to\infty$ and moreover $\Gamma'(t)/\Gamma(t)\to 0$ as $t\to\infty$. By asymptotic integration of $\Gamma'(t)/\Gamma(t)\to 0$ as $t\to\infty$ we get $\log \Gamma(t)/t\to 0$ as $t\to\infty$, whence $\log \gamma(t)/t\to 0$ as $t\to\infty$; thus we get
	$\lim_{t\to\infty} e^{\epsilon t} \gamma(t)=+\infty$ and 
	$\lim_{t\to\infty} e^{-\epsilon t} \gamma(t)=0$ for all $\epsilon>0$. These limits justify the terminology subexponential. 
	Lastly, by using l'H\^opital's rule, the above remarks show,  for any $\alpha>0$, that  
	\begin{equation} \label{eq.subconvexp}
		\int_0^t e^{-\alpha(t-s)}\gamma(s)\,ds \sim \frac{1}{\alpha}\Gamma(t)\sim \frac{1}{\alpha}\gamma(t) \quad t\to \infty.
	\end{equation}
	The next result from \cite{AppLaw2024ODE} characterises of growth or decay rates of the solution \eqref{eq.y}, if the upper bound is monotone or subexponential. The crucial object to study is the function $f_\theta(t):=\int_{(t-\theta)^+}^t f(s)\,ds$, $t\geq 0$, $\theta\in [0,1]$; we compare it to a weight function $\gamma$ via hypotheses such as \eqref{eq.fthetagammabdd},  
	\begin{gather} 
		\label{eq.fthetagamma0}
		\text{
			$f_\theta(t)=o(\gamma(t))$ as $t\to\infty$}; \\
		\label{eq.fthetaequivgamma}
		\text{There exists $\theta'>0$ such that $\limsup_{t\to\infty} |f_\theta(t)|/\gamma(t)>0$}.
	\end{gather} 
	\begin{theorem} \label{lemma.fthetaymonotone}
		Let $f\in C([0,\infty);\mathbb{R})$, $y$ be the unique continuous solution to \eqref{eq.y}. 
		Let either $\gamma$ be (a)  positive, non--decreasing and in $C(0,\infty)$ or (b) subexponential. Then 
		\begin{enumerate}
			\item[(i)] \eqref{eq.fthetagammabdd} $\Longleftrightarrow$ 
			$y(t)=\text{O}(\gamma(t))$, as $t\to\infty$;	
			\item[(ii)] \eqref{eq.fthetagamma0} $\Longleftrightarrow$
			$y(t)=\text{o}(\gamma(t))$, as $t\to\infty$;	
			\item[(iii)] \eqref{eq.fthetagammabdd} and \eqref{eq.fthetaequivgamma}  $\Longleftrightarrow$
			$\limsup_{t\to\infty} |y(t)|/\gamma(t) \in (0,\infty)$.
		\end{enumerate} 
	\end{theorem}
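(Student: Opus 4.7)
The plan is to exploit the explicit representation $y(t) = \int_0^t e^{-(t-s)} f(s)\,ds$ together with the integrated identity \eqref{eq.aveRepy}, which reads $f_\theta(t) = y(t) - y(t-\theta) + \int_{t-\theta}^t y(s)\,ds$, to relate $y$ and the averaged quantities $\{f_\theta\}_{\theta\in[0,1]}$ in both directions.

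The easy directions in parts (i) and (ii) are the implications from the growth/decay of $y$ to that of $f_\theta$. If $y(t)=O(\gamma(t))$ (resp.\ $o(\gamma(t))$), then by either monotonicity of $\gamma$ (case (a)) or the subexponentiality relation \eqref{eq.psisub} (case (b)), each of $y(t)$, $y(t-\theta)$, and $\int_{t-\theta}^t y(s)\,ds$ is $O(\gamma(t))$ (resp.\ $o(\gamma(t))$) uniformly for $\theta\in[0,1]$, so the identity \eqref{eq.aveRepy} delivers the required control of $f_\theta$.

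For the hard direction, where $f_\theta$ controls $y$, I split the integral representation into unit-length pieces and integrate by parts on each. With $F_n(s) := \int_{t-n-1}^s f(u)\,du = f_{s-(t-n-1)}(s)$, a routine integration by parts gives
\begin{equation*}
\int_{t-n-1}^{t-n} e^{-(t-s)} f(s)\,ds = e^{-n} f_1(t-n) - \int_{t-n-1}^{t-n} e^{-(t-s)} F_n(s)\,ds.
\end{equation*}
The hypothesis \eqref{eq.fthetagammabdd} bounds both $|f_1(t-n)|$ and $|F_n(s)|$ by a constant multiple of $\gamma(\cdot)$, yielding (up to an exponentially vanishing correction from the leftmost subinterval, where $y(0)=0$)
\begin{equation*}
|y(t)| \leq K\sum_{n=0}^{N-1} e^{-n}\gamma(t-n) + K\int_0^t e^{-(t-s)}\gamma(s)\,ds.
\end{equation*}
In case (a) the bound $\gamma(t-n)\leq \gamma(t)$ makes both contributions immediately $O(\gamma(t))$. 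In case (b), the integral is $\sim\gamma(t)$ by \eqref{eq.subconvexp}, while the sum is handled using a Potter-type consequence of subexponentiality, namely $\gamma(t-n)/\gamma(t)\leq Be^{\epsilon n}$ for some $\epsilon\in(0,1)$ fixed in advance, which absorbs the $e^{-n}$ factor. Part (ii) uses exactly the same decomposition, with $K$ replaced by an arbitrary $\varepsilon>0$ past a large enough time, together with a Toeplitz-type tail argument to discard the finitely many initial terms.

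Part (iii) is then essentially a synthesis: (i) yields $\limsup_{t\to\infty}|y(t)|/\gamma(t)<\infty$, while the contrapositive of (ii), applied to \eqref{eq.fthetaequivgamma}, forces $\limsup_{t\to\infty}|y(t)|/\gamma(t)>0$. The main obstacle I anticipate is case (b) of the hard direction: the convolution-type sum $\sum_n e^{-n}\gamma(t-n)$ requires a quantitative, uniform refinement of \eqref{eq.psisub} (the Potter-type bound) to pass from pointwise ratio convergence to the summable control needed here, and this is where most of the technical care must be concentrated.
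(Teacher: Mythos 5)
The paper does not actually prove this theorem: it is stated as a result imported from the companion work \cite{AppLaw2024ODE} (``The next result from \cite{AppLaw2024ODE} characterises\ldots''), so there is no in-text proof against which to compare your attempt. Judged on its own terms, however, your argument is correct in outline and organised along the right lines.

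The easy directions (from bounds on $y$ to bounds on $f_\theta$) via the integrated identity \eqref{eq.aveRepy} are fine: in case (a) you use monotonicity, in case (b) the uniform convergence in \eqref{eq.psisub} on $\theta\in[0,1]$, and in both cases the three terms $y(t)$, $y(t-\theta)$ and $\int_{t-\theta}^t y$ are controlled uniformly in $\theta\in[0,1]$, with the restriction of \eqref{eq.fthetagammabdd} to ``all $t\ge 0$'' recovered by continuity on a compact initial interval. Your integration-by-parts identity on each $[t-n-1,t-n]$ is correct: with $F_n(s)=\int_{t-n-1}^s f=f_{s-(t-n-1)}(s)$ one has $F_n(t-n-1)=0$, $F_n(t-n)=f_1(t-n)$, and $\frac{d}{ds}e^{-(t-s)}=e^{-(t-s)}$, which yields exactly the displayed formula, and $|F_n(s)|\le\sup_{0\le\theta\le 1}|f_\theta(s)|$ because $s-(t-n-1)\in[0,1]$ on that subinterval.

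The one genuine gap you flag yourself, the Potter-type bound $\gamma(t-n)\le Be^{\epsilon n}\gamma(t)$ for subexponential $\gamma$, does need to be established rather than asserted, but it follows from material already recorded in the paper. Writing $\Gamma(t)=\int_t^{t+1}\gamma(s)\,ds$, the paper notes $\Gamma\sim\gamma$ and $\Gamma'/\Gamma\to 0$; hence for any $\epsilon\in(0,1)$ there is $T_\epsilon$ with $|\Gamma'/\Gamma|<\epsilon$ on $[T_\epsilon,\infty)$, and integrating $\Gamma'/\Gamma$ over $[t-n,t]$ gives $\Gamma(t-n)\le e^{\epsilon n}\Gamma(t)$ whenever $t-n\ge T_\epsilon$. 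This transfers to $\gamma$ with a multiplicative constant via $\Gamma\sim\gamma$. You must still treat the remaining terms with $t-n<T_\epsilon$: there $\gamma(t-n)$ is bounded by $\max_{[0,T_\epsilon]}\gamma$ and $e^{-n}\le e^{-(t-T_\epsilon)}$, so that tail contributes $O(e^{-t})=o(\gamma(t))$ by subexponentiality ($e^{\epsilon t}\gamma(t)\to\infty$). Part (ii) is then the same split with an $\varepsilon$-threshold, and part (iii) is the logical synthesis you describe (noting that the negation of \eqref{eq.fthetagamma0}, taken over all $\theta\in[0,1]$, is exactly \eqref{eq.fthetaequivgamma}). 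So the approach is correct; what it needs is the explicit derivation of the Potter-type bound and the handling of the finitely many terms where that bound is unavailable, neither of which is hard given the facts already listed in the paper about subexponential functions.
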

	The hypothesis (A) in the above Theorem requires uniform control of $f_\theta$ for all $\theta\in [0,1]$. The choice of $[0,1]$ is arbitrary; control on an interval of the form $[0,\delta]$ for any choice of $\delta>0$ is equivalent. This holds for all hypotheses of this type, whether the weight function $\gamma$ is subexponential, regularly varying or monotone.

We see that the last part Theorem~\ref{lemma.fthetaymonotone} can be used to characterise the size of solutions of \eqref{eq.y} \textit{exactly} to leading order. Roughly, $y(t)$ is $O(\gamma(t))$ but not $o(\gamma(t))$ as $t\to\infty$ if and only if $f_\theta(t)$ is $O(\gamma(t))$ for all $\theta$, but not $o(\gamma(t))$ for some $\theta$, as $t\to\infty$. 
\vspace{-0.25in}

\section{Asymptotics of deterministic pantograph equation}
It is now easy to characterise boundedness of solutions of the pantograph equation. 
\begin{theorem}\label{thm.detasymptoticbound}
	Suppose that $q\in (0,1)$ and \eqref{eq.unforcedasstable} holds. Let $f\in C([0,\infty);\mathbb{R})$ and $x$ be the unique continuous solution to  \eqref{eq.forced}. Then the following are equivalent
	\begin{enumerate}
		\item[(A)] $y$ is bounded;
		\item[(B)] There exists $F\geq 0$ such that $|\int_t^{t+\theta} f(s)\,ds|\leq F$ for all $\theta\in [0,1]$, $t\geq 0$;
		\item[(C)] $x$ is bounded. 
	\end{enumerate}
\end{theorem}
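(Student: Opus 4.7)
The plan is to mirror the three-step argument used in the proof of Theorem~\ref{thm.detasymptoticstab}, replacing each ``convergence to zero'' assertion by its boundedness counterpart. This is natural because the supporting results (Theorem~\ref{thm.1}(b) and Theorem~\ref{lemma.fthetaymonotone}(i)) already come equipped with a boundedness variant.

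First I would establish (A)$\Leftrightarrow$(B) by appealing to Theorem~\ref{lemma.fthetaymonotone}(i) with the constant weight $\gamma\equiv 1$ (which is positive, continuous and non-decreasing). In that specialisation the hypothesis \eqref{eq.fthetagammabdd} reads precisely ``$\sup_{\theta\in[0,1]}|f_\theta(t)|\leq K$ for all $t\geq 0$'', i.e.\ condition (B) with $F=K$, while the conclusion $y(t)=\text{O}(\gamma(t))$ becomes exactly (A).

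Next, for (A)$\Rightarrow$(C), I would recycle the decomposition from the proof of Theorem~\ref{thm.detasymptoticstab}: set $z(t):=x(t)-y(t)$ with $z(0)=\zeta$ and define $\varphi$ by \eqref{def.varphi1}. Direct differentiation shows $z$ satisfies \eqref{eq.z}. Under (A), the forcing $\varphi(t)=ay(qt)+(b+1)y(t)$ is bounded on $[0,\infty)$, so Theorem~\ref{thm.1}(b) yields boundedness of $z$, and hence of $x=y+z$. For (C)$\Rightarrow$(A), I would repeat the rewriting $x'=-x+g$ with $g(t)=(1+b)x(t)+ax(qt)+f(t)$, integrate by parts, and rearrange to obtain \eqref{eq.yrepx}. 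If (C) holds, the terms $x(t)$ and $\zeta e^{-t}$ on the right-hand side are bounded; and since $s\mapsto (1+b)x(s)+ax(qs)$ is bounded (say by $M$), the convolution $\int_0^t e^{-(t-s)}\{(1+b)x(s)+ax(qs)\}\,ds$ is dominated by $M\int_0^\infty e^{-s}\,ds=M$. Hence $y$ is bounded.

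I do not anticipate any real obstacle: the proof is essentially a cosmetic translation of that of Theorem~\ref{thm.detasymptoticstab}. The only substantive changes are to invoke part (b) of Theorem~\ref{thm.1} (instead of part (a)) in the second step, and, in the third step, to use the trivial estimate that the convolution of a bounded function with $s\mapsto e^{-s}$ is bounded, in place of the argument that the same convolution lies in $BC_0$.
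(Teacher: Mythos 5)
Your proof is correct and takes essentially the same approach as the paper: the equivalence (A)$\Leftrightarrow$(B) via Theorem~\ref{lemma.fthetaymonotone}(i) with $\gamma\equiv 1$, the decomposition $z=x-y$ for (A)$\Rightarrow$(C), and \eqref{eq.yrepx} for (C)$\Rightarrow$(A). The only (inconsequential) difference is that for (A)$\Rightarrow$(C) you invoke Theorem~\ref{thm.1}(b) while the paper cites Theorem~\ref{thm.monotonepert}(A) with the constant weight; both give the same boundedness of $z$, and your choice is if anything slightly cleaner since Theorem~\ref{thm.1}(b) states the boundedness conclusion directly without the need to interpret a constant $\gamma$ in the monotone-weight theorem.
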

To prove this, apply part (A) of Theorem~\ref{thm.monotonepert} to $z=x-y$. Since $y$ is bounded, $\varphi$ given by \eqref{def.varphi1} is bounded. Thus $z$ is bounded and hence  $x=z+y$ is bounded, so (A) implies (C). (C) implies (A) by virtue of \eqref{eq.yrepx}. Applying part (i) of Theorem~\ref{lemma.fthetaymonotone} with $\gamma(t)=1$ shows that (A) and (B) are equivalent, completing the proof. 

We note of course that taking the results of Theorems~\ref{thm.detasymptoticstab} and~\ref{thm.detasymptoticbound} together characterise when $x$ is bounded but non--convergent. This will happen if and only if condition (B) of Theorem~\ref{thm.detasymptoticbound} holds, but condition (B) of Theorem~\ref{thm.detasymptoticstab} does not hold. 

A simple situation where this happens is if $f(t)\to f^\ast\neq 0$ as $t\to\infty$, or if $f(t)=\sin(t)$ for $t\geq 0$. A more complicated situation, where $f$ is unbounded, is furnished by a simple modification of the second example after Theorem~\ref{thm.detasymptoticstab}. To give boundedness (but non--convergence) of solutions of the perturbed pantograph equation, we now choose $w_n=1/h_n$, while still taking $h_n>1$ and $h_n\uparrow \infty$ as $n\to\infty$ (once again, therefore, the rate of growth of the running maximum of $|f|$ can be arbitrarily fast in time). In this case, for all $\theta\in [0,1]$ and all $t\geq 0$ we have  
\[
\int_{t}^{t+\theta} f(s)\,ds \leq \int_{t}^{t+1} f(s)\,ds \leq \int_{n}^{n+2} f(s)\,ds = 1,
\]   
where $n$ is the largest integer less than or equal to $t$. Thus condition (B) of Theorem~\ref{thm.detasymptoticbound} holds. On the other hand
\[
\int_{n}^{n+1} f(s)\,ds = \frac{1}{2},
\]
for every integer $n$, so condition (B) of Theorem~\ref{thm.detasymptoticstab} does not hold.

An interesting class of bounded solutions are those which tend to non--trivial limits. The situation when this happens can also be characterised. Once again, we believe this is the first such characterisation of convergent solutions for nonautonomous functional differential equations.  We leave to the interested reader the proof of the following theorem, whose proof is similar to those of Theorems~\ref{thm.detasymptoticstab} and \ref{thm.detasymptoticbound}. Consult the proof of~\cite[Lemma 15.9.2]{GLS} to prove the equivalence of (A) and (B); the equivalence of (A) and (C) can be proven with sole recourse to arguments given here.

\begin{theorem}\label{thm.detasymptoticlimit}
	Suppose that $q\in (0,1)$ and \eqref{eq.unforcedasstable} holds. Let $f\in C([0,\infty);\mathbb{R})$ and $x$ be the unique continuous solution to  \eqref{eq.forced}. Then the following are equivalent
	\begin{enumerate}
		\item[(A)] $y(t)$ tends to a non--trivial finite limit as $t\to\infty$;
		\item[(B)] There exists $F\neq 0$ such that for every $\theta\in [0,1]$ that $\frac{1}{\theta}\int_t^{t+\theta} f(s)\,ds\to F$ as $t\to\infty$ for all $\theta\in [0,1]$;
		\item[(C)] $x(t)$ tends to a non--trivial finite limit as $t\to\infty$.  
	\end{enumerate}
\end{theorem}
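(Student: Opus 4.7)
The strategy will mirror the proofs of Theorems~\ref{thm.detasymptoticstab} and~\ref{thm.detasymptoticbound}. The plan is to use the decomposition $z=x-y$ (with $\varphi$ as in \eqref{def.varphi1}) and the representation \eqref{eq.yrepx} to transfer limits between $x$ and $y$, and to defer the equivalence of (A) and (B) to the classical Tauberian argument in \cite[Lemma 15.9.2]{GLS}.

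For (A) $\Rightarrow$ (C), I would suppose $y(t)\to L\neq 0$. Then $z:=x-y$ solves \eqref{eq.z} with perturbation $\varphi(t)=ay(qt)+(b+1)y(t)\to M:=(a+b+1)L$. Under \eqref{eq.unforcedasstable} one has $a+b\neq 0$ (since $b<0$ and $|b|>|a|$), so the constant $c:=-M/(a+b)$ is well defined. Setting $\tilde z(t):=z(t)-c$, direct substitution shows $\tilde z'(t)=b\tilde z(t)+a\tilde z(qt)+\tilde\varphi(t)$ with $\tilde\varphi(t):=\varphi(t)+(a+b)c\to 0$. Theorem~\ref{thm.1}(a) applied to $\tilde z$ then yields $\tilde z(t)\to 0$, hence $z(t)\to c$ and $x(t)=y(t)+z(t)\to L+c=-L/(a+b)\neq 0$.

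For (C) $\Rightarrow$ (A), I would suppose $x(t)\to L_x\neq 0$ and use \eqref{eq.yrepx}. The first two terms on the right-hand side tend to $L_x$ and $0$ respectively. In the convolution integral, the integrand $g(s):=(1+b)x(s)+ax(qs)$ tends to $(1+a+b)L_x$; decomposing $g$ as this limit plus a vanishing remainder and convolving with $e^{-t}$ (an $L^1$ kernel of total mass one) shows that the integral tends to $(1+a+b)L_x$. Therefore $y(t)\to L_x-(1+a+b)L_x=-(a+b)L_x\neq 0$, which proves (A) and shows the limits in (A) and (C) are linked by $L=-(a+b)L_x$, consistent with the formula from the previous step.

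For the equivalence (A) $\Leftrightarrow$ (B), the forward direction is immediate from \eqref{eq.aveRepy}: if $y(t)\to L$, then $y(t)-y(t-\theta)\to 0$ and $\int_{t-\theta}^{t} y(s)\,ds\to \theta L$, so the average of $f$ over an interval of length $\theta$ converges to $L$, giving (B) with $F=L$. The reverse direction, that convergence of the averages of $f$ forces $y$ to converge, is the only step that does not reduce to the paper's own machinery; I would import it verbatim from the Tauberian argument in \cite[Lemma 15.9.2]{GLS}. This Tauberian step is the main obstacle, while the remaining implications are essentially algebraic shifts of the earlier sufficiency results, relying on the nonvanishing of $a+b$ to transport nontriviality of one limit to nontriviality of the other.
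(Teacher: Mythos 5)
Your proof is correct and follows exactly the route the paper itself sketches (decompose $z=x-y$ with the forcing $\varphi$ from \eqref{def.varphi1}, transfer the limit via Theorem~\ref{thm.1} after a constant shift, use \eqref{eq.yrepx} for the converse, and defer $(A)\Leftrightarrow(B)$ to \cite[Lemma 15.9.2]{GLS}). The observation that $a+b\neq 0$ under \eqref{eq.unforcedasstable}, so that the shift constant $c$ is well defined and non--triviality of the limits is preserved in both directions, is exactly the point the paper leaves implicit, and you have handled it cleanly.

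One remark worth flagging: your computation gives $\lim_{t\to\infty}x(t)=-L/(a+b)=-F/(a+b)$, whereas the paper's display immediately following the theorem asserts $\lim_{t\to\infty}x(t)=F/(1-(b+a))$. These are not the same quantity. Testing a constant forcing $f\equiv F$ and seeking the fixed point of $x'=ax(qt)+bx(t)+F$ gives $0=(a+b)L_x+F$, i.e., $L_x=-F/(a+b)$, in agreement with your derivation; the paper's $F/(1-(b+a))$ appears to be a typographical slip (likely for $F/(0-(b+a))$). So your proposal not only completes the proof the paper defers but also corrects a minor error in the paper's commentary.
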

It can be easily shown that the limits at infinity of $x$ and $y$ are related to $F$ in condition (B) according to 
\[
\lim_{t\to\infty} x(t)=\frac{F}{1-(b+a)}, \quad  \lim_{t\to\infty} y(t)=F.
\] 
%

We now use Theorem~\ref{thm.growthbounds2} to give rather sharp conditions on the asymptotic behaviour of solutions, without imposing pointwise conditions on the forcing function $f$. We start by giving a theorem for large perturbations. 
\begin{theorem}\label{thm.detlargepert}
	Suppose that $q\in (0,1)$ and \eqref{eq.unforcedasstable} holds. Let $f\in C([0,\infty);\mathbb{R})$ and $x$ be the unique continuous solution to \eqref{eq.forced}. Suppose $\gamma$ obeys either (i) $\gamma\in \text{RV}_\infty(\eta)$ for some $\eta>\kappa$ or (ii) $\gamma$ is positive, increasing and in $C^1$, with $\gamma(t)\to\infty$ as $t\to\infty$. 
	\begin{enumerate}
		\item[(I)]
		\eqref{eq.fthetagammabdd} $\Longleftrightarrow$ 
		$y(t)=\text{O}(\gamma(t))$, as $t\to\infty$
		$\Longleftrightarrow$ 
		$x(t)=\text{O}(\gamma(t))$, as $t\to\infty$. 
		\item[(II)] 
		\eqref{eq.fthetagamma0}  $\Longleftrightarrow$  
		$y(t)=\text{o}(\gamma(t))$, as $t\to\infty$ 
		$\Longleftrightarrow$  
		$x(t)=\text{o}(\gamma(t))$, as $t\to\infty$. 
		\item[(III)] \eqref{eq.fthetagammabdd}, \eqref{eq.fthetaequivgamma} 
		$\Leftrightarrow$ 
		$\limsup_{t\to\infty} |y(t)|/\gamma(t)\in (0,\infty)$
		$\Leftrightarrow$ 
		$\limsup_{t\to\infty} |x(t)|/\gamma(t)\in (0,\infty)$.
	\end{enumerate}
	\end{theorem}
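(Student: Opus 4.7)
The plan is to reduce all three equivalences to facts about the auxiliary ODE solution $y$ via the decomposition $z := x - y$ used in the proof of Theorem~\ref{thm.detasymptoticstab}, and then invoke Theorem~\ref{lemma.fthetaymonotone} to link $y$ to the $f_\theta$ conditions. The equation for $z$ is \eqref{eq.z} with forcing $\varphi(t) = ay(qt) + (b+1)y(t)$, and the reverse representation \eqref{eq.yrepx} expresses $y$ in terms of $x$. Before anything else, I would verify that Theorem~\ref{lemma.fthetaymonotone} is applicable in both cases of hypothesis on $\gamma$: in case (ii), $\gamma$ is non-decreasing by assumption; in case (i), any $\gamma\in\text{RV}_\infty(\eta)$ satisfies $\gamma(t-\theta)/\gamma(t)\to 1$ for every $\theta>0$, hence is subexponential in the sense of \eqref{eq.psisub}. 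Thus Theorem~\ref{lemma.fthetaymonotone} gives the three equivalences between the $f_\theta$ conditions and the corresponding growth rates of $y$. What remains is to prove the mutual equivalence between the growth of $y$ and the growth of $x$.

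For the forward direction ($y$-growth implies $x$-growth), I would observe that in both cases the ratio $\gamma(qt)/\gamma(t)$ stays bounded as $t\to\infty$ (it converges to $q^\eta$ in case (i), while $\gamma(qt)\leq\gamma(t)$ in case (ii)), so $y(t)=\text{O}(\gamma(t))$ (resp. $\text{o}(\gamma(t))$) forces $\varphi(t)=\text{O}(\gamma(t))$ (resp. $\text{o}(\gamma(t))$). In case (ii), Theorem~\ref{thm.monotonepert} applied to $z$ directly yields $z(t)=\text{O}(\gamma(t))$ or $z(t)=\text{o}(\gamma(t))$, and then $x=y+z$ inherits the estimate. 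In case (i), Theorem~\ref{thm.growthbounds2} gives $z(t)=\text{O}\!\bigl(t^\kappa\int_1^t\gamma(s)/s^{\kappa+1}\,ds\bigr)$; since the integrand lies in $\text{RV}_\infty(\eta-\kappa-1)$ with $\eta-\kappa-1>-1$, Karamata's theorem yields $\int_1^t\gamma(s)/s^{\kappa+1}\,ds\sim(\eta-\kappa)^{-1}\gamma(t)/t^\kappa$, hence $z=\text{O}(\gamma)$. The little-o conclusion in case (i) follows from the linear big-$\text{O}$ bound by a standard $\varepsilon$-argument: if $\varphi=\text{o}(\gamma)$, then for each $\varepsilon>0$ eventually $|\varphi|\leq\varepsilon\gamma$, so $\limsup|z|/\gamma\leq C\varepsilon/(\eta-\kappa)$ for every $\varepsilon>0$.

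For the reverse direction ($x$-growth implies $y$-growth), I would use \eqref{eq.yrepx}. The term in curly brackets is $\text{O}(\gamma)$ or $\text{o}(\gamma)$ by the same ratio argument, and its convolution with $e^{-(\,\cdot\,)}$ is $\text{O}(\gamma)$ or $\text{o}(\gamma)$ by \eqref{eq.subconvexp}, which is valid since $\gamma$ is subexponential in both cases. The boundary term $\xi e^{-t}$ is $\text{o}(\gamma(t))$ because subexponential $\gamma$ satisfies $e^{\varepsilon t}\gamma(t)\to\infty$. This closes the equivalence chain in parts (I) and (II). Part (III) then follows by a two-sided contradiction argument: \eqref{eq.fthetagammabdd} with part (I) yields $x=\text{O}(\gamma)$; if one had $\limsup|x|/\gamma=0$ then $x=\text{o}(\gamma)$ and part (II) plus Theorem~\ref{lemma.fthetaymonotone}(ii) would give $f_\theta=\text{o}(\gamma)$ for every $\theta$, contradicting \eqref{eq.fthetaequivgamma}; the converse implication is symmetric.

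The main technical obstacle is the forward direction in case (i) with $\gamma$ of possibly negative index in $(\kappa,0)$, because Theorem~\ref{thm.monotonepert} (which requires monotone non-decreasing $\gamma$) is unavailable and one must rely entirely on Theorem~\ref{thm.growthbounds2}; here the Karamata asymptotic $\int_1^t\gamma(s)/s^{\kappa+1}\,ds\sim(\eta-\kappa)^{-1}\gamma(t)/t^\kappa$ is the key computation, and the strict inequality $\eta>\kappa$ is exactly what makes this integral regularly varying of index greater than $-1$, permitting the Karamata asymptotic. Once this is in place, the rest is bookkeeping with the decomposition and the subexponential convolution estimate.
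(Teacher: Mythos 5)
Your overall decomposition ($z = x - y$, $\varphi(t) = ay(qt) + (1+b)y(t)$, the reverse representation \eqref{eq.yrepx}) and your chain of equivalences matches the paper's own proof, and your Karamata computation for case (i) is exactly the content the paper invokes as ``part (C) of Theorem~\ref{thm.growthbounds2}.'' However, there is a genuine error in your treatment of the reverse direction ($x$-growth implies $y$-growth) under hypothesis (ii). You write that the convolution $\int_0^t e^{-(t-s)}\gamma(s)\,ds$ is controlled ``by \eqref{eq.subconvexp}, which is valid since $\gamma$ is subexponential in both cases.'' But in case (ii), $\gamma$ is only assumed positive, increasing, $C^1$, and divergent; it is not subexponential in general. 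For instance $\gamma(t) = e^t$ or $\gamma(t)=e^{t^2}$ satisfy every condition in (ii) yet fail \eqref{eq.psisub} — so \eqref{eq.subconvexp} does not apply and cannot be cited there. This contradicts your own earlier (correct) remark that it is monotonicity, not subexponentiality, that makes Theorem~\ref{lemma.fthetaymonotone} applicable in case (ii).

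The correct repair is the one the paper uses: when $\gamma$ is non-decreasing, bound the integrand directly via $\gamma(s)\leq\gamma(t)$ and $\gamma(qs)\leq\gamma(t)$ for $s\leq t$, giving
\[
\int_0^t e^{-(t-s)}\bigl(|1+b|K\gamma(s)+|a|K\gamma(qs)\bigr)\,ds
\leq (|1+b|+|a|)K\,\gamma(t)\int_0^t e^{-(t-s)}\,ds \leq (|1+b|+|a|)K\,\gamma(t),
\]
which requires nothing beyond monotonicity. (The boundary term $\xi e^{-t}$ is $o(\gamma(t))$ simply because $\gamma(t)\to\infty$.) The little-$o$ variant likewise follows by splitting the convolution at a finite time $T$, using monotonicity on $[T,t]$ and the crude exponential bound on $[0,T]$. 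In contrast, \eqref{eq.subconvexp} is legitimate in case (i), where regular variation does imply subexponentiality; there your argument is sound. In short: the conclusions you state happen to be true, but the cited justification for case (ii) is invalid and must be replaced by the elementary monotonicity estimate.
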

	\begin{proof}
Note part (III) follows from parts (I) and (II). 
For part (I), note that if $\gamma$ is regularly varying, it is also subexponential, so (A) and (B) are equivalent by Theorem~\ref{lemma.fthetaymonotone}. Thus is suffices to prove the equivalence of (B) and (C).

Next, let $z=x-y$ and $\varphi(t)=(1+b)y(t)+ay(qt)$. Then $z$ obeys \eqref{eq.z}. If (B) holds, $|y(t)|\leq C_0\gamma(t)$. Then, as $\gamma$ is regularly varying, $|\varphi(t)|\leq C\gamma(t)$ for some $C>0$. Since $\eta>\kappa$, part (C) of Theorem \ref{thm.growthbounds2} applies, so $z(t)=O(\gamma(t))$ as $t\to\infty$. But since $y(t)=O(\gamma(t))$ and $x=y+z$,  $x(t)=O(\gamma(t))$ as $t\to\infty$, which is (C). 

To show (C) implies (B), note that (C) yields $|x(t)|\leq K\gamma(t)$ for $t\geq 0$. This also gives $|x(qt)|\leq K\gamma(qt)\leq L\gamma(t)$, using the regular variation of $\gamma$, for constants $K$ and $L$. Applying the triangle inequality to \eqref{eq.yrepx}, and using these estimates, we get 
\begin{equation} \label{eq.yrvbound}
	|y(t)|\leq |\xi|e^{-t}+K\gamma(t)+\int_0^t e^{-(t-s)}\left(|1+b|K\gamma(s)+|a|L\gamma(s)\right)\,ds.
\end{equation}
Now, note that there is a $\Gamma$ such that $\gamma\sim \Gamma$ and $t\Gamma'(t)\sim \eta\Gamma(t)$ as $t\to\infty$. Thus, $\Gamma'(t)=o(\Gamma(t))$ as $t\to\infty$ and $e^t\Gamma(t)\to\infty$ as $t\to\infty$. Thus the integral in \eqref{eq.yrvbound} is majorised by some positive constant times $e^{-t}\int_0^t e^s \Gamma(s)\,ds$. 
By L'H\^opital's rule, 
\[
\lim_{t\to\infty} \frac{\int_0^t e^s\gamma(s)\,ds}{e^t \Gamma(t)}
=\lim_{t\to\infty} \frac{e^t \Gamma(t)}{e^t\Gamma(t)+e^t\Gamma'(t)}
=\lim_{t\to\infty} \frac{1}{1+\Gamma'(t)/\Gamma(t)}=1.
\]
Since $\gamma\sim \Gamma$, the integral in \eqref{eq.yrvbound} is $O(\gamma(t))$, and thus $y(t)=O(\gamma(t))$, which is (B).

For part (II) in the regularly varying case, note that if $\gamma$ is regularly varying, it is also subexponential, so (A) and (B) are equivalent by Theorem \ref{lemma.fthetaymonotone}. Thus it suffices to prove the equivalence of (B) and (C). Once again, let $z=x-y$ and $\varphi(t)=(1+b)y(t)+ay(qt)$. Then $z$ obeys \eqref{eq.z} and $\varphi(t)=o(\gamma(t))$ as $t\to\infty$, by hypothesis. Revisiting the proof of part (C) of Theorem~\ref{thm.growthbounds2}, it can be shown in a similar way that  $z(t)=o(\gamma(t))$ as $t\to\infty$. Hence $x(t)=o(\gamma(t))$ as $t\to\infty$, so (B) implies (C). To show that (C) implies (B), note by hypothesis that $|x(t)| <\epsilon \gamma(t)$ as $t\to\infty$ for all $t\geq T$. Using the regular variation, this also gives $|x(qt)|\leq C\epsilon \gamma(t)$ for all $t\geq T/q=T_1$ and some $t$ and $\epsilon$ independent $C>0$. Now take $t\geq T_1>T$, and applying the triangle inequality to \eqref{eq.yrepx}, we have 
\begin{multline*}
	|y(t)|\leq |\xi|e^{-t}+\epsilon \gamma(t)+e^{-t}\left|\int_{0}^{T_1} e^s((1+b)x(s)+ax(qs))\,ds\right| \\
	+ \epsilon \left(|1+b| + |a|C\right) \int_{T_1}^t e^{-(t-s)} \gamma(s)\,ds.
\end{multline*}
Dividing by $\gamma(t)$ and taking the limit as $t\to\infty$ on the righthand side, we get limits $\epsilon$, $0$ and $\epsilon \left(|1+b| + |a|C\right)$ respectively for the three terms, using the fact that $\gamma$ is subexponential in the last case. Thus $\limsup_{t\to\infty} |y(t)|/\gamma(t)\leq \epsilon(1+|1+b| + |a|C)$,
so since $\epsilon$ is arbitrary, we have $y(t)=o(\gamma(t))$ as $t\to\infty$, as needed. 

For part (I) in the monotone case, (A) and (B) are equivalent by Theorem~\ref{lemma.fthetaymonotone}, so if (B) and (C) are equivalent, we are done. Let $z=x-y$ and $\varphi(t)=(1+b)y(t)+ay(qt)$. Then $z$ obeys \eqref{eq.z}. Suppose (B) holds, so $|y(t)|\leq C\gamma(t)$. Then, as $\gamma$ is increasing, $|\varphi(t)|\leq C(|1+b|+|a|)\gamma(t)$. Hence by Theorem \ref{thm.monotonepert}, $z(t)=O(\gamma(t))$. But since $y(t)=O(\gamma(t))$ and $x=y+z$, we have $x(t)=O(\gamma(t))$ as $t\to\infty$, or (B).  
To show (C) implies (B), note from (C) that $|x(t)|\leq K\gamma(t)$ for $t\geq 0$. Applying the triangle inequality to \eqref{eq.yrepx}, using this estimate, and using the monotonicity of $\gamma$, we get 
\begin{align*}
	|y(t)|&\leq |\xi|e^{-t}+K\gamma(t)+\int_0^t e^{-(t-s)}\left(|1+b|K\gamma(s)+|a|K\gamma(qs)\right)\,ds\\
	&\leq |\xi|e^{-t}+K\gamma(t)+\int_0^t e^{-(t-s)}\left(|1+b|K+|a|K\right)\,ds \cdot \gamma(t)\\
	&\leq (|\xi|/\gamma(0)+K+ |1+b|K+|a|K)\gamma(t),
\end{align*}
which is (B). The proof of (II) in the monotone case uses  ideas from parts (I) in the monotone and (II) in the regularly varying case, 
which is left to the reader.
\end{proof}

For smaller perturbations, it is hard to prove as clean a result. If $\gamma$ is regularly varying, and $x(t)=O(\gamma(t))$,  using \eqref{eq.yrepx} yields $y(t)=O(\gamma(t))$ as $t\to\infty$. In particular, take $\gamma(t)=t^\kappa$. However, this over--estimates the size of the smallest perturbation needed to preserve the rate of decay $t^\kappa$ of \eqref{eq.unforced}, since we know that if $y(t)=O(\gamma(t))$ where $\psi(t):=\gamma(t)/t^{\kappa+1}$ is regularly varying with index $-1$, and is integrable, then $x(t)=O(t^\kappa)$ as $t\to\infty$. But the integrability of $\psi$ means that $t\psi(t)\to 0$ as $t\to\infty$, and so the largest perturbations which preserve the rate of decay are $o(t^\kappa)$ (without further restrictions being placed on $y$). 

\begin{theorem} \label{thm.smallstochpert}
Suppose that $q\in (0,1)$ and \eqref{eq.unforcedasstable} holds. Let $f\in C([0,\infty);\mathbb{R})$ and $x$ be the unique continuous solution to \eqref{eq.forced}. Let $\gamma\in \text{RV}_\infty(\eta)$. 
\begin{itemize}
	\item[(i)] Let $y(t)=\text{O}(\gamma(t))$ as $t\to\infty$.
	\begin{enumerate}
		\item[(A)] 
		If $\eta< \kappa$, then $x(t)=O(t^\kappa)$ as $t\to\infty$.
		\item[(B)] If $\eta=\kappa$ and  $t\mapsto \gamma(t)/t^{\kappa+1}$ is in $L^1$, then 
		$x(t)=O(t^\kappa)$ as $t\to\infty$.
		Moreover, if $\limsup_{t\to\infty} |y(t)|/\gamma(t)\in (0,\infty)$, then $\limsup_{t\to\infty} |x(t)|/\gamma(t)>0$.
		\item[(C)] If $\eta=\kappa$ and  $t\mapsto\gamma(t)/t^{\kappa+1}$ is not in $L^1$, then 
		\[
		x(t)=\text{O}\left(t^\kappa\int_1^t \frac{\gamma(s)}{s^{\kappa+1}}\,ds\right), \,t\to\infty.
		\]
		Moreover $\limsup_{t\to\infty} |y(t)|/\gamma(t)>0$ implies  $\limsup_{t\to\infty} |x(t)|/\gamma(t)>0$.  
	\end{enumerate}  
	\item[(ii)] \hspace{0.1in} If $x(t)=\text{O}(\gamma(t))$ as $t\to\infty$, then $y(t)=\text{O}(\gamma(t))$ as $t\to\infty$. Likewise, if $x(t)=\text{o}(\gamma(t))$ as $t\to\infty$, then $y(t)=\text{o}(\gamma(t))$ as $t\to\infty$.   
\end{itemize}
\end{theorem}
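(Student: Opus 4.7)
The approach is to exploit the decomposition $x = y + z$ that runs through the section, where $z$ satisfies the perturbed pantograph equation \eqref{eq.z} with forcing $\varphi(t) = (1+b)y(t) + ay(qt)$. Regular variation of $\gamma$ gives $\gamma(qt) \sim q^\eta \gamma(t)$, so the hypothesis $y(t) = \text{O}(\gamma(t))$ propagates to $\varphi(t) = \text{O}(\gamma(t))$, and Theorem \ref{thm.growthbounds2} then furnishes the master estimate
\[
z(t) = \text{O}\!\left(t^\kappa \int_1^t \frac{\gamma(s)}{s^{\kappa+1}}\,ds\right), \quad t\to\infty,
\]
from which the three cases of part (i) are read off by inspecting the integral.

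I would actually establish part (ii) first, because its ``little--o'' half supplies the contrapositive needed for the $\limsup$ claims in part (i). Starting from the representation \eqref{eq.yrepx} and bounding $|x(qs)|$ by $\text{O}(\gamma(qs)) = \text{O}(\gamma(s))$, the matter reduces to the convolution estimate $\int_0^t e^{-(t-s)}\gamma(s)\,ds = \text{O}(\gamma(t))$. I would prove this by splitting at $s = t/2$: on $[t/2,t]$ the uniform convergence theorem for regularly varying functions gives $\gamma(s)/\gamma(t) \le 2$ for large $t$, while on $[0,t/2]$ the factor $e^{-(t-s)} \le e^{-t/2}$ produces exponential decay that dominates the boundedness of $\gamma$ on compacts. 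The analogous ``little--o'' estimate follows by replacing the $\text{O}$ bound on $|x(s)|$ on $[T_\epsilon,\infty)$ by $\epsilon\gamma(s)$ and then letting $\epsilon \downarrow 0$.

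For part (i), write $\Psi(t) := \int_1^t \gamma(s)/s^{\kappa+1}\,ds$ and inspect the three regimes. In case (A), the integrand is in $\text{RV}_\infty(\eta-\kappa-1)$ with index below $-1$, so $\Psi$ is bounded and $z(t) = \text{O}(t^\kappa)$; since $\eta < \kappa$, also $y(t) = \text{O}(\gamma(t)) = \text{o}(t^\kappa)$, and $x = y + z = \text{O}(t^\kappa)$. In case (B), $\Psi$ is again bounded, so $z(t) = \text{O}(t^\kappa)$. The delicate point here is that $y(t) = \text{O}(t^\kappa)$ as well: writing $\gamma(t) = t^\kappa L(t)$ with $L$ slowly varying, the integrability of $L(s)/s$ \emph{forces} $L(t) \to 0$. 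Indeed, the uniform convergence theorem gives $L(s)/L(t) \to 1$ uniformly for $s/t \in [1/2,1]$, hence $\int_{t/2}^t L(s)/s\,ds \sim L(t)\log 2$; but absolute continuity of the integral makes the left-hand side tend to zero, forcing $L(t) \to 0$. In case (C), $\Psi$ is slowly varying with $\Psi(t) \to \infty$, and the standard Karamata-type fact that a slowly varying function with non-integrable $L(s)/s$ obeys $L(t)/\int_1^t L(s)/s\,ds \to 0$ gives $y(t) = \text{o}(t^\kappa \Psi(t))$, while $z(t) = \text{O}(t^\kappa \Psi(t))$, yielding the stated estimate. The $\limsup$ statements in (B) and (C) then follow by contraposition of the ``little--o'' half of part (ii): if $\limsup_{t\to\infty} |x(t)|/\gamma(t) = 0$, then $x(t) = \text{o}(\gamma(t))$, whence $y(t) = \text{o}(\gamma(t))$ by part (ii), contradicting $\limsup_{t\to\infty} |y(t)|/\gamma(t) > 0$.

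The main obstacle I anticipate is precisely the observation in case (B) that $L(s)/s \in L^1$ with $L$ slowly varying implies $L(t) \to 0$, since without this the decomposition would only yield $x(t) = \text{O}(\gamma(t))$ rather than the sharper $\text{O}(t^\kappa)$, blurring the transition exactly at the critical boundary $\eta = \kappa$. Once this is in place, the remaining steps are comparatively routine: Theorem \ref{thm.growthbounds2} does the heavy lifting on $z$, the convolution estimate drives part (ii), and the three regimes of $\Psi$ are then a matter of careful book-keeping.
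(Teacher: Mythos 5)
Your proof is correct and follows exactly the paper's proof sketch: decompose $x = y + z$ with $\varphi = O(\gamma)$, apply Theorem~\ref{thm.growthbounds2} to $z$, handle part (ii) via the representation \eqref{eq.yrepx}, and obtain the $\limsup$ claims by contraposition through the little--o half of part (ii). You correctly identify and supply the one delicate detail the paper leaves implicit---that slow variation of $L$ together with $L(s)/s \in L^1(1,\infty)$ forces $L(t) \to 0$---which is what allows case (B) to close.
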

Notice that the hypotheses on $y$ required to classify the asymptotic behaviour of $x$ are all equivalent to hypotheses on  $f$, as laid out in Theorem~\ref{thm.detlargepert}. Proofs of upper bounds in part (i) follow by considering $z=x-y$, noting that $y=O(\gamma)$ implies $\varphi=O(\gamma)$ and then applying Theorem \ref{thm.growthbounds2} to $z$. The proofs of lower bounds are by contradiction. Note first that (ii) can be shown by using \eqref{eq.yrepx}. Then, contradiction of the desired conclusion leads to supposing that $x(t)=o(\gamma(t))$. But then $y(t)=o(\gamma(t))$ by \eqref{eq.yrepx}, contradicting the hypothesis that $\limsup_{t\to\infty} |y(t)|/\gamma(t)>0$.

We now make some remarks. First, observe that, taken together, Theorems~\ref{thm.smallstochpert} and \ref{thm.detlargepert} cover a very wide class of perturbation sizes, and are in fact very   comprehensive for practical purposes. 

Second, note that part (A) and (B) of the theorem are in the form of a Hartman--Wintner type--result (see e.g.,  \cite{HW55} for the first results by Hartman and Wintner, \cite[Cor X.16.4]{Hart2002} for a typical result on ordinary equations and for linear functional equations, see Pituk~\cite{Pituk2002}). A typical feature of Hartman--Wintner results is that if perturbation terms obeys some integral smallness condition, then the solution of the perturbed equation $x$ can be related to the solution of the unperturbed problem in the form $x=O(u)$, or perhaps $x/u$ tends to a finite limit; we see therefore that parts (A) and (B) of the theorem have this character. Clearly, Hartman--Wintner results are a strong type of ``asymptotic preservation'' result. 

A weaker, but still very useful asymptotic preservation result, is a Perron-type result (for a typical result for linear, or linearisable, autonomous functional differential equations, see e.g., Pituk~\cite{Pituk2006}). In that case,  we show that some nonlinear function of the solution behaves like a known function of $t$. This allows us ignore dependence on initial data in bounds or limits (which can be hard to quantify explicitly anyway), and enables us to admit more irregularity (or to get away with weaker estimation) in the perturbed terms. For linear equations (or equations that can be linearised) Perron--type results often concentrate on (top) Liapunov exponents i.e. $\limsup_{t\to\infty} \log|x(t)|/t$ or $\lim_{t\to\infty} \log |x(t)|/t$, if the limit exists. We see that the existence of these limits is weaker than knowing that $x(t)\sim ce^{\lambda t}$ as $t\to\infty$ for some $c\neq 0$, and this is the price that must typically be paid in Perron--type results. In our case, because the solution has power--law asymptotic behaviour in the unperturbed case, we consider $\limsup_{t\to\infty} \log |x(t)|/\log t$, noting that for almost all solutions of the unperturbed equation we have 
\[
\limsup_{t\to\infty} \frac{\log |u(t)|}{\log t} =\kappa.
\]
We also notice that this is consistent with classic ideas of polynomial asymptotic stability in stochastic equations, due to Mao~\cite{Mao1992a,Mao1992b} and generalisations to other (in general, non--exponential) decay rates; see e.g.,  
Wu and Hu~\cite{WuHu2012}. In addition, we notice that considering solutions on this logarithmic scale is implicit in the bounds obtained for general delay differential equations in Cermak~\cite{cermak}. 

%
%
%

By focussing on the size of $\log |x(t)|$ in this way, we may characterise solution sizes very cleanly in terms of $y$, for both large and small perturbations, in the following Perron--type theorem. 
\begin{theorem} \label{thm.perronthm}
Suppose that $q\in (0,1)$ and \eqref{eq.unforcedasstable} holds. Let $f\in C([0,\infty);\mathbb{R})$ and $x$ be the unique continuous solution to \eqref{eq.forced}. Let $|\theta(t)|\nearrow  \infty$ as $t\to\infty$, and suppose $\theta(t)/\log t\to \eta\in [\kappa,\infty]$ as $t\to\infty$.  
\begin{enumerate}
	\item[(A)] If $\eta=\kappa$, then  
	\[
	\limsup_{t\to\infty} \frac{\log |y(t)|}{\theta(t)}\leq 1\, 
	\Longleftrightarrow\, \limsup_{t\to\infty} \frac{\log |x(t)|}{\theta(t)}\leq 1.
	\]
	\item[(B)] If $\eta\in (\kappa,\infty]$, then 
	\[
	\limsup_{t\to\infty} \frac{\log |y(t)|}{\theta(t)}=1\, 
	\Longleftrightarrow\, \limsup_{t\to\infty} \frac{\log |x(t)|}{\theta(t)}=1.
	\]
\end{enumerate}
\end{theorem}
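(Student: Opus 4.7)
The strategy is to use the decomposition $x = y + z$, where $z$ solves \eqref{eq.z} with forcing $\varphi(t) = (1+b)y(t) + ay(qt)$ and initial value $\xi$, together with the representation formula \eqref{eq.yrepx}, which expresses $y$ back in terms of $x$. These two identities will let me transfer log-growth envelopes between $y$ and $x$ in either direction. A preliminary step converts the log-rate hypotheses into polynomial envelopes using $\theta(t)/\log t \to \eta$: for arbitrarily small $\epsilon,\delta>0$, $\limsup \log|u|/\theta \leq 1$ is equivalent, eventually, to $|u(t)|$ being controlled by $t^{(1+\epsilon)\eta+\delta}$, with the direction of the inequality determined by the sign of $\theta$; the ``$=1$'' part of (B) adds a matching subsequence inequality. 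In the $\eta=\infty$ case the envelope grows faster than any polynomial.

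For the forward direction ($y$ controls $x$), assuming $|y(t)|\leq t^{(1+\epsilon)\eta+\delta}$ eventually (the typical upper-envelope situation when $\theta>0$), regular variation transfers this to $|\varphi(t)|=O(t^{(1+\epsilon)\eta+\delta})$. I would then apply Theorem~\ref{thm.growthbounds2} with $\gamma(t)=t^{(1+\epsilon)\eta+\delta}$: since $(1+\epsilon)\eta+\delta>\kappa$ whenever $\eta\geq\kappa$ and $\epsilon,\delta$ are small, the integral $\int_1^t \gamma(s)/s^{\kappa+1}\,ds$ has polynomial order $(1+\epsilon)\eta+\delta-\kappa$, yielding $|z(t)|=O(t^{(1+\epsilon)\eta+\delta})$. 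Then $|x|\leq|y|+|z|$ inherits the same envelope, which converts back to $\limsup \log|x|/\theta\leq 1$ by letting $\epsilon,\delta\downarrow 0$ and noting $\log t/\theta(t)\to 1/\eta$. The $\eta=\infty$ case uses Theorem~\ref{thm.monotonepert} with a smooth, nondecreasing majorant of the envelope in place of Theorem~\ref{thm.growthbounds2}.

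For the reverse direction ($x$ controls $y$), I would start from \eqref{eq.yrepx}. The envelope on $|x|$ controls each term: the initial-data term $\xi e^{-t}$ is negligible, $x(t)$ contributes directly, and the convolution $\int_0^t e^{-(t-s)}\{(1+b)x(s)+ax(qs)\}\,ds$ is handled by the subexponentiality of $e^{(1+\epsilon)\theta(\cdot)}$, which follows from $\theta(t)-\theta(t-1)\to 0$, an easy consequence of $\theta(t)/\log t\to\eta\in\R$; the $\eta=\infty$ case uses a monotone smoothed majorant directly. Then \eqref{eq.subconvexp} gives $|y(t)|$ dominated by the same envelope as $|x|$, yielding the log-rate bound on $y$, and the equality case in (B) follows by comparing subsequences.

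The main obstacle is case (A), $\eta=\kappa$. Here $\theta(t)<0$ eventually, so $\limsup \log|y|/\theta\leq 1$ is actually a \emph{lower} bound $|y(t)|\geq t^{(1+\epsilon)\kappa+o(1)}$ on $|y|$, which does not directly control $|\varphi|$ or $|z|$ from above. The resolution is that in this boundary regime the unforced pantograph rate $t^\kappa$ already dominates any lower envelope of the form $t^{(1+\epsilon)\kappa}$ (since $(1+\epsilon)\kappa<\kappa<0$), so the contribution from $z$ automatically sits above the desired envelope for $|x|$; the matching upper estimates on $|z|$ from Theorem~\ref{thm.growthbounds2} or Theorem~\ref{thm.smallstochpert} ensure that $|x|$ does not decay faster than $|y|$. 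The reverse implication in (A) then again rides on \eqref{eq.yrepx}, with the lower envelope on $|x|$ propagating to $|y|$ by the same convolution reasoning used in paragraph three.
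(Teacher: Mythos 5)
Your strategy for part (B) with $\eta>0$ is essentially the paper's, repackaged: translate $\limsup \log|y|/\theta \le 1$ into polynomial envelopes $|y(t)|\le t^{(1+\epsilon)(\eta+\delta)}$, push these forward through the decomposition $x=y+z$ (your use of Theorem~\ref{thm.growthbounds2} and Theorem~\ref{thm.monotonepert} unpacks what Theorem~\ref{thm.detlargepert} already packages), use \eqref{eq.yrepx} for the reverse direction, and send the slack to zero. That much is fine, and the paper does the same thing with Theorem~\ref{thm.detlargepert} as the black box.

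However there are three real gaps. First, the equality case ``$=1$'' in (B): the paper does \emph{not} get the lower inequality $\limsup \ge 1$ from subsequence-matching; it is a contradiction argument. Assume $\limsup \log|x|/\theta=l<1$, deduce $|x(t)|\le t^{(l+\epsilon)(\eta+\epsilon)}$ with $(l+\epsilon)(\eta+\epsilon)<\eta$, propagate through \eqref{eq.yrepx} to get the same order for $y$, and conclude $\limsup \log|y|/\log t\le l\eta<\eta$, contradicting the hypothesis. (One must further split into $l\eta\geq\kappa$ and $l\eta<\kappa$ to apply the large-perturbation versus the Hartman--Wintner bound.) Your ``comparing subsequences'' does not supply this.

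Second, and this is the main problem, your handling of part (A) doesn't go through. You correctly observe that when $\eta=\kappa<0$ one has $\theta<0$ eventually and a literal reading of $\limsup \log|y|/\theta\le 1$ gives a \emph{lower} envelope on $|y|$. But your proposed fix --- that $t^\kappa$ ``already dominates'' the lower envelope $t^{(1+\epsilon)\kappa}$ so that ``the contribution from $z$ automatically sits above the desired envelope for $|x|$'' --- conflates upper and lower bounds. Nothing forces $|z|$ to be at least $t^\kappa$: $z$ is pinned down by the forcing $\varphi$ and the initial data, and can decay at any rate. More importantly, an upper bound on $|z|$ (the only thing Theorems~\ref{thm.growthbounds2} and~\ref{thm.smallstochpert} give) does not prevent cancellation in $x=y+z$, so a lower bound on $|y|$ does not transfer to $|x|$ by this route. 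What is actually intended for (A) is the reformulation in the remark immediately following the theorem: $\limsup \log|y|/\log t\le\kappa\Leftrightarrow\limsup\log|x|/\log t\le\kappa$, an \emph{upper}-envelope statement. Under that reading the proof is the same one-liner as for (B), at the boundary: for each $\epsilon>0$ apply Theorem~\ref{thm.detlargepert} with $\gamma(t)=t^{\kappa+\epsilon}$ (whose index exceeds $\kappa$) to transfer the bound from $y$ to $x$, use \eqref{eq.yrepx} for the converse, and let $\epsilon\to 0$.

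Third, a smaller point: the assertion that $\theta(t)-\theta(t-1)\to 0$ is an ``easy consequence'' of $\theta(t)/\log t\to\eta\in\R$ is false in general, even under the monotonicity hypothesis $|\theta|\nearrow\infty$ (one can have $O(1)$ jumps over unit intervals concentrated at exponentially sparse times while $\theta/\log t$ converges). This step is also unnecessary: once you have replaced the hypothesis by the polynomial envelope $t^{(1+\epsilon)(\eta+\delta)}$, you should do the convolution estimate for \eqref{eq.yrepx} against that envelope, which is regularly varying and hence subexponential with constants under control, rather than against $e^{(1+\epsilon)\theta(\cdot)}$.
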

Part (A) is equivalent to the more easily--understood statement 
\[
\limsup_{t\to\infty} \frac{\log |y(t)|}{\log t}\leq \kappa\, 
\Longleftrightarrow\, \limsup_{t\to\infty} \frac{\log |x(t)|}{\log t}\leq \kappa,
\] 
and part (B), for finite $\eta>\kappa$,  is equivalent to 
\[
\limsup_{t\to\infty} \frac{\log |y(t)|}{\log t}=\eta\, 
\Longleftrightarrow\, \limsup_{t\to\infty} \frac{\log |x(t)|}{\log t}=\eta,
\]
but we have stated Theorem~\ref{thm.perronthm} in the form above to show unity between the small and large perturbation cases, irrespective of the  solution or perturbation size. 

To prove part (B) of Theorem~\ref{thm.perronthm}, we break it into the cases when $\eta\in (\kappa,\infty)$ and $\eta=\infty$. We prove the former case, leaving the similar proof in the latter case to the interested reader. The reader will immediately note that the following proof requires mainly a re--interpretation or re--working of parts of earlier results, but besides this, no important new analysis is needed. 

If $\eta>\kappa$, the first statement in (B) means that for every $\epsilon>0$ there is a $C_\epsilon>0$ such that $|y(t)|\leq C_\epsilon t^{\eta+\epsilon}$ for all $t\geq 1$. Because $\eta>\kappa$, we can apply Theorem~\ref{thm.detlargepert} to give $|x(t)|\leq D_\epsilon t^{\eta+\epsilon}$ for all $t\geq 1$ and some $D_\epsilon>0$. As a result, we get that $\limsup_{t\to\infty} \log |x(t)|/\log t\leq \eta+\epsilon$. Letting $\epsilon\to 0$, we get that $\limsup_{t\to\infty} \log|x(t)|/\theta(t)\leq 1$. Suppose now, by way of contradiction, that the limsup is strictly less than unity; call it $l$. Then, for any $\epsilon>0$ so small that $(l+\epsilon)(\eta+\epsilon)<\eta$, there exists a $T(\epsilon)>0$ such that we have $\log |x(t)|< (l+\epsilon)\theta(t) < (l+\epsilon)(\eta+\epsilon) \log t$ for $t\geq T(\epsilon)$. Therefore, by Theorem~\ref{thm.detlargepert}, we have that there exists a $C_\epsilon>0$ such that $|y(t)|\leq C_\epsilon t^{(l+\epsilon)(\eta+\epsilon)}$ for all $t\geq 1$. Now, take logarithms, divide by $\log t$ and let $t\to\infty$; and noting that the inequality holds for all $\epsilon>0$ sufficiently small, we may further let $\epsilon\to 0^+$, giving 
\[
\limsup_{t\to\infty} \frac{\log |y(t)|}{\log t}\leq l\eta<\eta,
\]
while by hypothesis, this limsup is $\eta$, yielding the desired contradiction. Hence the forward implication holds. 

For the backward implication, the hypothesis implies that for every $\epsilon>0$ there is a $C_\epsilon>0$ such that $|x(t)|\leq C_\epsilon t^{\eta+\epsilon}$ for all $t\geq 1$. By \eqref{eq.yrepx} this implies that there exists $D_\epsilon>0$ such that $|y(t)|\leq D_\epsilon t^{\eta+\epsilon}$ for $t\geq 1$, and so, by taking logs, the limit as $t\to\infty$, and finally the limit as $\epsilon\to 0$ as before, we have $\limsup_{t\to\infty} \log |y(t)|/\theta(t)\leq  1$. As before, assume by way of contradiction that the limsup $l$ is strictly less than unity. Then for all $\epsilon$ sufficiently small we have 
$l+\epsilon<1$, and $(l+\epsilon)(\eta+\epsilon)<\eta$, and consequently for such an $\epsilon>0$ there is a $T(\epsilon)>0$ such that $\log|y(t)|<(l+\epsilon)\theta(t)< (l+\epsilon)(\eta+\epsilon)\log t$ for all $t\geq T(\epsilon)$.  

In the case that $l\eta<\kappa$, we may choose $\epsilon>0$ so that $(l+\epsilon)(\eta+\epsilon)\leq \kappa$; then by Theorem \ref{thm.smallstochpert}, we have that $x(t) = O(t^\kappa\log t)$, which means that $\limsup_{t\to\infty}\log x(t)/\log t\leq \kappa$. But $\limsup_{t\to\infty} \log |x(t)|/\log t=\eta$, so $\eta\leq \kappa$, which is false, and generates a contradiction in that case. 

In the other case, where  $l\eta\geq \kappa$, for every $\epsilon>0$ we have  $(l+\epsilon)(\eta+\epsilon)>\kappa$; in addition choose $\epsilon$ so small that $l+\epsilon<1$, and $(l+\epsilon)(\eta+\epsilon)<\eta$. Then by  Theorem~\ref{thm.detlargepert} we have that $|x(t)|\leq D_\epsilon t^{(l+\epsilon)(\eta+\epsilon)}$, which reduces to 
\[
\limsup_{t\to\infty} \frac{\log |x(t)|}{\log t}\leq (l+\epsilon)(\eta+\epsilon).
\]
By hypothesis, this limsup is $\eta$, so 
$\eta\leq  (l+\epsilon)(\eta+\epsilon)$. But $(l+\epsilon)(\eta+\epsilon)<\eta$, which generates the desired contradiction. We have thus shown (B) in the case where $l\eta\geq \kappa$.

The proof of part (A), where $\eta=\kappa$, is very similar to the argument given above, and again we leave it to the interested reader to complete.

The clean picture generated by the combination of the Hartman--Wintner type Theorem \ref{thm.smallstochpert}, the large perturbation characterisation Theorem~\ref{thm.detlargepert} and the Perron--type characterisation Theorem \ref{thm.perronthm} is spoiled by the inequality in part (A) of Theorem~\ref{thm.perronthm}, and the converse part (ii) in Theorem \ref{thm.detlargepert}. This converse does not give a result like 
\[
x(t)=O(t^\kappa), \, t\to\infty \Longrightarrow
y(t)=O(\gamma(t)), \,t\to\infty  \text{ where }  t\mapsto \gamma(t)/t^{\kappa+1}\in L^1(0,\infty),
\] 
which, together with Theorem~\ref{thm.smallstochpert}, would essentially characterise the maximal allowable size of perturbations; instead, the best we can do in the present work is to show that  
but rather $x(t)=O(t^\kappa)$ as $t\to\infty$ implies $y(t)=O(t^\kappa)$ as $t\to\infty$. 

Some scrutiny of part (ii) of  Theorem~\ref{thm.smallstochpert} may suggest a reason why a clean converse may be difficult to obtain. It suggests that if a solution of the perturbed equation decays faster than the unperturbed equation \eqref{eq.unforced}, this can only be achieved with a rapidly vanishing perturbation. It is not difficult to manufacture examples of this phenomenon. Let $\mu$ be a positive, continuous and integrable function.
Rescale $\mu$ so that $\int_0^\infty \mu(s)\,ds=1$. Let $x(0)=\xi\neq 0$ and suppose $f$ is given by 
\[
f(t)=\xi\left(-\mu(t)-b\int_t^\infty \mu(s)\,ds -a\int_{qt}^\infty \mu(s)\,ds\right), \quad  t\geq 0.
\] 
Then $x(t)=\xi \int_t^\infty \mu(s)\,ds$  for $t\geq 0$ is the unique continuous solution of \eqref{eq.forced}. If we suppose that $\mu$ tends to zero superexponentially, in the sense that $\lim_{t\to\infty} \log \mu(t)/t=-\infty$, then $x$ tends to zero superexponentially, and so does $f$; moreover, the more rapid the decay in $f$, the more rapid the decay in $x$. Thus, we may have the situation that the equation may be is perturbed by two ``small" perturbations, but that the smaller perturbation gives rise to a more slowly decaying solution. For instance, if $a>0$, $x(0)>0$ and $f(t)\equiv 0$, we have that there exist $0<c_1\leq c_2<+\infty$ such that $c_1t^\kappa \leq x(t)\leq c_2t^\kappa$ for all $t\geq 1$, while the solution in the above example, where there is a non--trivial perturbation, decays superexponentially fast when 
$\log \mu(t)/t\to-\infty$ as $t\to\infty$. 

The reader may also wonder whether the upper bound result in parts (i)(A) and (i)(B) of Theorem~\ref{thm.smallstochpert} can be improved in at least some situations to the stronger asymptotic estimate $x(t)\sim ct^\kappa$ for some $c\neq 0$. One situation where this seems especially plausible is that in which $f$ is positive, and $a>0$, since these conditions promote the positivity of the solution, and would tend to mitigate against the oscillation in the solution that is exhibited when $a<0$ and $f\equiv 0$. However, even in these situations where positivity of the solution is promoted, it is still not always possible to rule out the situation that $x(t)/t^\kappa$ is a bounded function which does not tend to a limit. Therefore, Theorem~\ref{thm.smallstochpert} is also sharp in terms of its conclusions; no better order estimate is available, granted the hypotheses imposed therein.   

To see that we cannot always expect solutions to be asymptotic to $t^\kappa$ as $t\to\infty$, consider the function $x$ defined by  
\[
x(t)=t^\kappa\left\{C+\sin\left(\frac{2\pi}{\log(1/q)}\log t\right)\right\}+Dt^\kappa\int_q^t \psi(s)\,ds, \quad t\geq q,
\]  
where $\psi\in RV_\infty(-1)$, $\psi\in L^1(0,\infty)$, and $C>1$, $D>0$. Let $a>0$, $b<0$ and $-b>a$. Let $\pi(t):=C+\sin(2\pi\log t/\log(1/q))$ for $t\geq q$. Define $f(t)=x'(t)-bx(t)-ax(qt)$ for all $t\geq 1$. Then $x$ solves \eqref{eq.forced} on $t\geq 1$ and $x(t)>0$ for all $t\geq q$. Moreover $x(t)/t^\kappa$ is bounded, but does not tend to a limit as $t\to\infty$. We now show that $f$ is asymptotically positive and satisfies the conditions of part (i)(A) of Theorem~\ref{thm.smallstochpert}. 

Note that $\pi(qt)=\pi(t)$ for all $t\geq 1$, so using $b+aq^\kappa=0$ and the uniform convergence theorem, we have that 
\[
\alpha(t):=-(bx(t)+ax(qt))=|b|Dt^\kappa\int_{qt}^t \psi(s)\,ds\sim |b|D\psi(t)t^{\kappa+1}\log(1/q),\quad t\to\infty.
\]
The function on the righthand side is positive, in $\text{RV}_\infty(\kappa)$, but is $o(t^\kappa)$ as
$t\to\infty$, because the integrability and asymptotic monotonicity of $\psi$ (the latter property follows from the regular variation of $\psi$) imply $t\psi(t)\to 0$ as $t\to\infty$. 

On the other hand, a direct calculation yields
\[
x'(t)=t^{\kappa-1} \left(\kappa\pi(t)+ \frac{2\pi}{\log(1/q)}\cos\left(\frac{2\pi}{\log(1/q)}\log t\right)
+D t\psi(t)+\kappa D \int_q^t \psi(s)\,ds\right).
\]
Note that the term in the brackets is uniformly bounded, since $\pi$ is bounded, $\psi$ is integrable, and $t\psi(t)\to 0$ as $t\to\infty$. Hence $x'(t)=O(t^{\kappa-1})$ as $t\to\infty$. Therefore, as $\alpha$ is asymptotic to a $RV_\infty(\kappa)$ function, $x'(t)=o(\alpha(t))$ as $t\to\infty$. Thus $f(t)=x'(t)+\alpha(t)\sim |b|D\log(1/q)\psi(t) t^{\kappa+1}$ as $t\to\infty$. From this it is easy to show that $y(t)\sim f(t)$ as $t\to\infty$. Therefore, we may take $\gamma(t):=|b|D\log(1/q)\psi(t) t^{\kappa+1}$ in Theorem~\ref{thm.smallstochpert} because it is in $\text{RV}_\infty(\kappa)$. Moreover, since $\psi$ is integrable, 
\[
\int_1^T \frac{\gamma(s)}{s^{\kappa+1}}\,ds
=|b|D\log(1/q)\int_1^T \psi(s)\,ds\to L, \quad T\to\infty,
\]
so although $y$ fulfills the properties of part (i)(B) of  Theorem~\ref{thm.smallstochpert}, with $a>0$ and $f$ positive for all $t$ sufficiently large, $t\to x(t)/t^\kappa$ does not tend to a constant as $t\to\infty$. This demonstrates, as claimed above, that the ``big O" result in Theorem~\ref{thm.smallstochpert} is the best possible in general, given the hypotheses imposed here. This example also hints at the possible asymptotic form of the bounded component of $x(t)/t^\kappa$, under appropriate smallness conditions on $y$, and we propose to investigate this further in later works.   

A characterisation, purely in terms of $f_\theta(t)$, of the situation wherein $\log |y(t)|$ has certain upper rates of growth or decay, has not been presented here. However, it seems that such a result should be available, and would be especially useful when considering Perron--type results for differential equations with general unbounded delay.  

Next, we compare the results of Theorems \ref{thm.detlargepert} and \ref{thm.smallstochpert}  with perturbation theorems of Cermak (see e.g., Theorem 2.3 and Example 4.1 in \cite{cermak}), which, in our view provide the sharpest and most comprehensive estimates for the asymptotic behaviour of perturbed functional differential equations with unbounded delay. He supposes that $f(t)=O(t^\beta)$ and $f'(t)=O(t^{\beta-1})$ as $t\to\infty$. Applying Cermak's results to the pantograph equation, we get  
\begin{align*}
x(t)&=O(t^\kappa), \quad t \to\infty, \quad \text{if $\beta<\kappa$},\\
x(t)&=O(t^\kappa\log t), \quad t \to\infty, \quad\text{if $\beta=\kappa$},\\
x(t)&=O(t^\beta), \quad t \to\infty, \quad\text{if $\beta>\kappa$}.
\end{align*}
First, we observe that our results need neither need pointwise estimates on $f$, nor on $f'$, but merely on $y$, where $y'=-y+f$, and the smoothness of $f$ is not needed in our results. Second, it can readily be the case that $f$ has very bad pointwise behaviour, yet $y$ has good behaviour. For example, suppose that 
\[
f(t)=t^{\beta}+e^{t}\sin(e^{2t}), \quad t\geq 0,
\] 
Clearly, $f(t)=O(e^t)$ and $f'(t)=O(e^{3t})$ as $t\to\infty$, yet $y(t)=O(t^\beta)$ as $t\to\infty$. Thus, applying Theorem~\ref{thm.smallstochpert}, we are able to conclude that 
\[
x(t)=O(t^{\max(\kappa,\beta)}), \quad t\to\infty, \text{for $\beta\neq \kappa$} 
\]
and $x(t)=O(t^\kappa\log t)$ as $t\to\infty$ if $\beta=\kappa$, while Cermak's theorem cannot give any conclusions.

Furthermore, the existing results require an explicit parametric comparison between the solution of the unperturbed equation and perturbation. For example, if $f(t)=O(t^\beta\log t)$ as $t\to\infty$, we cannot apply Theorem 2.3 in \cite{cermak} directly; instead, we would note that $f(t)=O(t^{\beta+\epsilon})$ for every $\epsilon>0$, and hope to obtain the estimate  $f'(t)=O(t^{\beta-1+\epsilon})$ on the derivative. Thus, if $\beta<\kappa$, we get $x(t)=O(t^\kappa)$ and if $\beta>\kappa$ we get $x(t)=O(t^\beta)$, but no prediction can be made in the case $\beta=\kappa$. Using Theorem~\ref{thm.smallstochpert} however,  we get 
\[
x(t)=O\left(t^\kappa\int_1^t s^{\beta-\kappa-1}\log s\,ds\right), \quad t\to\infty.
\]   
The estimates concur with \cite[Theorem 2.3]{cermak} in the cases when $\beta\neq \kappa$; but when $\beta=\kappa$, we get 
\[
x(t)=O(t^\kappa \log^2(t)), \quad t\to\infty,
\]
which is not covered by Cermak's theorem. 

These considerations seem to show that the estimation in the Fundamental Lemma~\ref{lemma.fundamental} may be slightly sharper in characterising the transition from small to large perturbations than the proof of \cite[Theorem 2.3]{cermak}. It is an interesting open question to revisit Cermak's delicate and beautiful proof of \cite[Lemma 3.3]{cermak} for equations with general unbounded delay. This result underlies \cite[Theorem 2.3]{cermak}, and is a close analogue of Lemma~\ref{lemma.fundamental}. This is especially worthwhile, because we have shown by means of the general examples in Theorem~\ref{thm.growthbdssharp} that the estimates in Theorem~\ref{thm.smallstochpert} are, in a certain sense, unimprovable.

Finally, we note that Theorem \ref{thm.detlargepert} above deals with large perturbations; suppose that $y(t)=O(e^{\beta t})$ as $t\to\infty$ for some $\beta>0$. Then this perturbation cannot be put into the framework of \cite[Theorem 2.3]{cermak}. However, by Theorem \ref{thm.detlargepert}, we have that $x(t)=O(e^{\beta t})$ as $t\to\infty$. Furthermore, if it is the case that $\limsup_{t\to\infty} |x(t)|/e^{\beta t}\in (0,\infty)$, this can happen if and only if  $\limsup_{t\to\infty} |y(t)|/e^{\beta t}\in (0,\infty)$. However, stronger conclusions about $f$ are not available; for instance consider 
\[
f(t)=e^{\beta t} + ce^{\theta t}\sin(e^{3\theta t})
\]    
where $\theta>\beta>0$. If $c=0$, then $f(t)=O(e^{\beta t})$ and $y(t)=O(e^{\beta t})$ as $t\to\infty$. If $c\neq 0$, then we still have 
$y(t)=O(e^{\beta t})$ as $t\to\infty$, but $f(t)=O(e^{\theta t})$. Thus, both $f$'s generate the same asymptotic size of $y$, and hence of $x$, but knowing the size of $x$ does not allow conclusions to be drawn about the order of $f$, merely that of $y$, or equivalently, by Theorem 6, the order of $f_\theta$.  

In the case of small perturbations i.e. when $y(t)=O(\gamma(t))$ as $t\to\infty$ and $t\mapsto\gamma(t)/t^{\kappa+1}$ in integrable, we have that $x(t)=O(t^\kappa)$ as $t\to\infty$, or to put it another way, $t\mapsto x(t)/t^\kappa$ is a bounded function. In the unperturbed case, the asymptotic  structure of this bounded component has been found; it is the sum of a log--periodic function and a function which tends to zero as $t\to\infty$ (see \cite{KatoMc}). It is the subject of further work to determine whether these small perturbation conditions are sufficient to establish that the bounded component of the perturbed equation retains a similar (log--periodic) structure. 

Some preliminary ideas as to how this might be achieved are presented in Section 5 and at the end of the paper in Section 7. Space constraints, and the necessity to develop a rate characterisation for related stochastic differential equations (see the next section) preclude more exact analysis and discussion in this work. 

Thus, we see that a correct determination of the exact asymptotic order is likely a first step in understanding the detailed asymptotic structure of solutions. Moreover, it opens up the prospect that the solution \textit{might retain the fine structure of its solutions despite bad pointwise behaviour in forcing functions, irregular stochastic terms, and relatively large forcing terms (as measured by $y$)}. In other words, the refined asymptotic bounds presented in this section can be seen as a critical first step in establishing that the solution structure is robust to relatively large and irregular perturbations.     

\vspace{-0.25in}

\section{Stochastically forced equation and auxiliary SDEs}
In this section, we consider the stochastically perturbed pantograph equations
\begin{equation} \label{eq.stochforced2}
dX(t)=(bX(t)+aX(qt)+f(t))\,dt + \sigma(t)\,dB(t), \quad t\geq 0; \quad X(0)=\xi,
\end{equation}
where once again $a$ and $b$ are real numbers and $q\in (0,1)$. For simplicity, we assume $\xi$ is non--random, $f$ is continuous as before, and   
\begin{equation} \label{eq.sigma}
\sigma\in C([0,\infty);\mathbb{R}).
\end{equation}
Here $B=\{B(t):t\geq 0\}$ is a standard Brownian motion. In order that a solution of \eqref{eq.stochforced2} is well--defined and has appropriate properties, we work on a complete probability space $(\Omega,\mathcal{F},\mathbb{P})$, where Brownian motion $B$ generates its own natural filtration $\mathcal{F}^B(t)=\sigma(\{B(s):0\leq s\leq t\})$ with  $\mathcal{F}^B(t)\subset \mathcal{F}$ for each $t\geq 0$. Under these assumptions, \eqref{eq.stochforced2} is considered a shorthand for the integral equation 
\[
X(t)=\xi+\int_{0}^t \{bX(s)+aX(qs) +f(s)\}\,ds + \int_0^t \sigma(s)\,dB(s), \quad t\geq 0
\]
which has a unique, continuous, and ($\mathcal{F}^B(t)$--) adapted solution on $[0,\infty)$ (see e.g., Mao~\cite{MaoBK}). 

In just the way the auxiliary ordinary differential equation \eqref{eq.y} allows sharp results to be proven for the deterministically forced equation \eqref{eq.forced}, solutions of related auxiliary stochastic (ordinary) differential equations enable us to characterise the asymptotic behaviour of solution of \eqref{eq.stochforced2}. Let $Y_0$ and $Y$ be solutions of the SDEs
\begin{gather} \label{eq.Y0}
dY_0(t)=-Y_0(t)\,dt+\sigma(t)\,dB(t), \quad t\geq 0; \quad Y_0(0)=0,	\\
\label{eq.Y}
dY(t)=(-Y(t)+f(t))\,dt + \sigma(t)\,dB(t), \quad t\geq 0; \quad Y(0)=0.
\end{gather}
Both equations possess unique, continuous, adapted solutions. Recalling that $y$ is the solution of \eqref{eq.y}, the representations of the solutions of \eqref{eq.Y0} and \eqref{eq.Y} are 
\begin{eqnarray} \label{eq.Y0rep}
Y_0(t)&=&e^{-t}\int_0^t e^s \sigma(s)\,dB(s), \quad t\geq 0, \\
\label{eq.Yrep}
Y(t)&=&e^{-t}\int_0^t e^s f(s)\,ds+e^{-t}\int_0^t e^s \sigma(s)\,dB(s) = y(t)+Y_0(t), \quad t\geq 0.
\end{eqnarray} 
We show momentarily that the pathwise asymptotic behaviour of \eqref{eq.Y0} and \eqref{eq.Y} can be characterised very precisely. For now, we show how 
$Y$ can be connected with $X$. Let 
$Z=X-Y$. 
Set $\phi_1(t)=ay(qt)+(1+b)y(t)$ for $t\geq 0$ and 
\begin{equation} \label{eq.phistoch}
\phi(t)= aY(qt)+(1+b)Y(t), \quad t\geq 0
\end{equation}
so $\phi(t)=\phi_0(t)+\phi_1(t)$ where  $\phi_0(t)=aY_0(qt)+(1+b)Y_0(t)$ and  
\begin{equation} \label{eq.Z}
Z'(t)=aX(qt)+bX(t)+Y(t) = aZ(qt)+bZ(t)+\phi(t), \quad t\geq 0.
\end{equation}
Thus, we can apply the machinery of the previous section to $Z$, 
once the asymptotic behaviour of $\phi$ is known. This will be the case, since we will good information about the asymptotic behaviour of $y$ and $Y_0$, on which $\phi$ entirely depends. Furthermore, since 
$X=Z+Y$, the asymptotic behaviour of 
$X$ can be found. 

In particular, suppose $b<0$ and $|a|<|b|$ and that $Y(t)\to 0$ as $t\to\infty$ a.s. Then $\phi(t)\to 0$ as $t\to\infty$ a.s. Applying Theorem \ref{thm.1} part (a), we see from \eqref{eq.Z} that $Z(t)\to 0$ as $t\to\infty$ a.s. Thus, $X(t)=Z(t)+Y(t)\to 0$ as $t\to\infty$ a.s. In the case that $Y$ is a.s. bounded, $\phi$ is bounded a.s., and this leads to $Z$ being bounded a.s., by applying part (b) of Theorem \ref{thm.1} to \eqref{eq.Z}. Hence $X=Z+Y$ is bounded a.s. 


Converses result by writing $Y$ in terms of $X$. 
Write $dX(t)=-X(t)+ f(t)+\{(1+b)X(t)+aX(qt)\}\,dt + \sigma(t)\,dB(t)$. 
By stochastic integration by parts, we get    
\begin{equation}  \label{eq.YreptildeX}
Y(t)=X(t)-\xi e^{-t} - \int_0^t e^{-(t-s)}\{(1+b)X(s)+aX(qs)\}\,ds, \quad t\geq 0.
\end{equation}
From \eqref{eq.YreptildeX}, it is clear that if $X(t)\to 0$ as $t\to\infty$, then $Y(t)\to 0$ as $t\to\infty$ a.s., and likewise, if $X$ is bounded a.s., then $Y$ is bounded a.s.

The above discussion shows the importance of possessing good asymptotic information concerning $Y_0$ and $Y$. Fortunately, some of this is already known for $Y_0$, and using known results, more can be established readily from scratch for $Y$. The following was established concerning $Y_0$ in \cite{ACR:2011(DCDS)}, generalising results from \cite{ChanWilliams:1989}.
\begin{theorem}\label{thm.Y0asy}
Suppose $\sigma$ obeys \eqref{eq.sigma}. Define $S(\epsilon)$ for $\epsilon>0$ by \eqref{eq.S} (if the sum  diverges, we take $S(\epsilon)=+\infty$)
and let $Y_0$ be the unique continuous adapted solution of \eqref{eq.Y0}. 
\begin{enumerate}
	\item[(a)] $S(\epsilon)<+\infty$ for all $\epsilon>0$ $\Longleftrightarrow$ $Y_0(t)\to 0$ as $t\to\infty$ a.s.
	\item[(b)] There exists $\epsilon'>0$ such that $S(\epsilon)<+\infty$ for all $\epsilon>\epsilon'$ 
	$\Longleftrightarrow$ $Y_0$ is bounded a.s. 
	\item[(c)] (There exists $\epsilon'>0$ such that $S(\epsilon)<+\infty$ for all $\epsilon>\epsilon'$ and $S(\epsilon)=+\infty$ for all $\epsilon<\epsilon'$)  $\Longleftrightarrow$ $0<\limsup_{t\to\infty} |Y_0(t)|<+\infty$ a.s. 
	\item[(d)] $S(\epsilon)=+\infty$ for all $\epsilon>0$  $\Longleftrightarrow$ $\limsup_{t\to\infty} |Y_0(t)|=+\infty$ a.s. 
\end{enumerate}
\end{theorem}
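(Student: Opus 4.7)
My approach would exploit the explicit representation $Y_0(t) = e^{-t}\int_0^t e^s\sigma(s)\,dB(s)$ and reduce the almost sure analysis to a Borel--Cantelli argument on an independent Gaussian sequence. First I would discretise: setting $M_n := e^{-n}\int_{n-1}^n e^s\sigma(s)\,dB(s)$, variation of parameters gives the recursion $Y_0(n) = e^{-1} Y_0(n-1) + M_n$, so $Y_0(n) = \sum_{k=1}^n e^{-(n-k)} M_k$. The $(M_n)$ are independent centred Gaussian (by the independence of Brownian increments on disjoint intervals), with variance $v_n^2$ satisfying $e^{-2}\sigma_n^2 \le v_n^2 \le \sigma_n^2$, where $\sigma_n^2 := \int_{n-1}^n \sigma^2(s)\,ds$. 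Because the recursion is a strict geometric contraction, a routine summation argument shows that $Y_0(n)\to 0$ a.s. iff $M_n \to 0$ a.s., $\sup_n|Y_0(n)|<\infty$ a.s. iff $\sup_n|M_n|<\infty$ a.s., and $\limsup_n|Y_0(n)|$ coincides with $\limsup_n|M_n|$ up to a bounded multiplicative factor lying in $[1, 1/(1-e^{-1})]$.

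Next I would bridge from integer to real times. On $[n,n+1]$ the increment $Y_0(t) - e^{-(t-n)}Y_0(n) = e^{-t}\int_n^t e^s\sigma(s)\,dB(s)$ is a (time-changed) Brownian motion whose supremum, by Doob's $L^2$ maximal inequality, has Gaussian tails controlled by the same quantity $\sigma_{n+1}$ that governs $M_{n+1}$. Consequently, any condition which controls $\sum_n P(|M_n|>\delta)$ automatically controls the corresponding sum for the interval-oscillations, so the conclusions (a)--(d) along integer times transfer to all real $t$.

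The heart of the proof is then the comparison between $\sum_n P(|M_n|>\delta)$ and $S(\epsilon)$. Matching upper and lower Gaussian tail bounds for $N(0,v_n^2)$, together with the sandwich $v_n^2 \asymp \sigma_n^2$, show that the Borel--Cantelli series for $(M_n)$ is of the same summability type as $S(\epsilon)$ after an appropriate change of parameter $\delta\leftrightarrow\epsilon$. With this in hand, part (a) follows from the first Borel--Cantelli lemma (summability of $S(\epsilon)$ for all $\epsilon$ forces $M_n\to 0$ a.s.), and the converse from the second Borel--Cantelli lemma applied to the independent events $\{|M_n|>\delta\}$. Part (b) is analogous, with ``for all $\delta>0$'' replaced by ``for some $\delta$''. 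For parts (c) and (d) I would additionally invoke Kolmogorov's $0$--$1$ law: $\limsup_t|Y_0(t)|$ is measurable with respect to the tail of the independent family $(M_n)$, hence is almost surely a constant in $[0,\infty]$, and the four cases of the theorem correspond to the four possible regimes of the critical threshold $\epsilon' := \inf\{\epsilon>0 : S(\epsilon)<\infty\}$.

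The main obstacle is the precise tail comparison, because the three regimes $\sigma_n\to 0$, $\sigma_n$ bounded, and $\sigma_n\to\infty$ require different uses of the Gaussian tail asymptotics: the prefactor $\sigma_n$ in $S(\epsilon)$ is dominant in one regime and negligible in another, and the reparameterisation must not lose the constant that pins down $\epsilon'$ in part (c). A secondary technical point is that the step two passage from integer to real time must not degrade the summability conditions, but this is automatic because the same $\sigma_n$ governs both the variance of $M_n$ and the maximal oscillation of $Y_0$ on $[n-1,n]$.
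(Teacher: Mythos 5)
The paper does not prove this theorem; it is quoted from Appleby, Cheng and Rodkina \cite{ACR:2011(DCDS)} (which itself builds on Chan and Williams \cite{ChanWilliams:1989}), so there is no in-paper proof to compare your argument against. Your overall scheme — discretise via $M_n = e^{-n}\int_{n-1}^n e^s\sigma(s)\,dB(s)$, observe that $Y_0(n)$ is a geometric-kernel convolution of the independent centred Gaussians $M_n$, reduce the a.s.~statements about $Y_0$ to a.s.~statements about $\sup_n|M_n|$ and $\limsup_n|M_n|$, bridge integer to real times via a maximal inequality on $[n,n+1]$, and then use two-sided Borel--Cantelli and a $0$--$1$ law — is indeed the standard route in this literature and is the right high-level strategy.

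There is, however, a concrete gap at exactly the step you flag as "the heart of the proof," and it does not close as stated. The Gaussian tail for $M_n\sim N(0,v_n^2)$ is, for $\delta/v_n$ large,
\[
P(|M_n|>\delta)\sim \sqrt{\tfrac{2}{\pi}}\,\frac{v_n}{\delta}\exp\!\left(-\frac{\delta^2}{2v_n^2}\right),
\]
with $v_n^2\asymp\sigma_n^2$, so the exponent carries $\sigma_n^2$ in the \emph{denominator}. By contrast the formula \eqref{eq.S} in the paper has $\sigma_n^2=\int_{n-1}^n\sigma^2$ in the \emph{numerator} of the exponent, namely $\sigma_n\exp(-\epsilon\sigma_n^2)$. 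There is no reparametrisation $\delta\leftrightarrow\epsilon$ that makes $\sum_n P(|M_n|>\delta)$ "of the same summability type" as $\sum_n\sigma_n e^{-\epsilon\sigma_n^2}$: in the regime $\sigma_n\to\infty$, $\sum_n\sigma_n e^{-\epsilon\sigma_n^2}$ converges while $\sum_n P(|M_n|>\delta)$ diverges, and in the critical annealing regime $\sigma_n^2\sim c/\log n$ the formula \eqref{eq.S} as printed gives $S(\epsilon)=+\infty$ for every $\epsilon$ (case (d), unbounded) even though $Y_0$ is in fact bounded and non-convergent (case (c)). The resolution is almost certainly that \eqref{eq.S} contains a typo and should read $\exp\bigl(-\epsilon\big/\!\int_{n-1}^n\sigma^2(s)\,ds\bigr)$; with that correction your tail comparison does work, with the change of parameter $\epsilon=\delta^2/2$, the sandwich $e^{-2}\sigma_n^2\le v_n^2\le\sigma_n^2$ costing only a fixed multiplicative factor in $\epsilon$, which is harmless in (a), (b) and (d) ("for all" or "for no" $\epsilon$) and also harmless in (c) because the theorem asserts only $\epsilon'\in(0,\infty)$ and does not pin down its value. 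So: verify the formula against \cite{ACR:2011(DCDS)} and make the reciprocal-exponent correction explicit; as written, your step "matches $S(\epsilon)$ after reparametrisation" is asserted but is false for the formula in the statement you are proving. A secondary, more minor point: your claim that $\limsup_n|Y_0(n)|$ and $\limsup_n|M_n|$ agree up to a factor in $[1,1/(1-e^{-1})]$ should be argued via the two inequalities $\sup_n|Y_0(n)|\le(1-e^{-1})^{-1}\sup_n|M_n|$ and $\limsup_n|M_n|\le(1+e^{-1})\limsup_n|Y_0(n)|$ (the latter from $M_n=Y_0(n)-e^{-1}Y_0(n-1)$); what you wrote suggests a two-sided comparison with the same window of constants, which is not quite what comes out, although the qualitative conclusion (finite iff finite, zero iff zero) is unaffected.
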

It can be shown that $S$ can only obey assumption (A) in part (a), (c) and (d) of the theorem, so solutions of \eqref{eq.Y0} either tend to zero with probability one, are bounded but non--convergent with probability one, or are unbounded with probability one.  

Notice the asymptotic behaviour of $\sigma$, measured through the function $\sigma^2_1(t)$ entirely governs the asymptotic behaviour of $Y_0$.
This mirrors the manner in which the asymptotic decay or boundedness of $f_\theta(t)$ governs the convergence or boundedness of solutions of the deterministic equations \eqref{eq.y} and \eqref{eq.forced}.   

%
It is also shown in \cite{ACR:2011(DCDS)} that if $\sigma^2_1(n)$ is asymptotic to a monotone (non--increasing) sequence, $Y_0(t)\to 0$ as $t\to\infty$ a.s. if and only if $\sigma^2_1(n) \log n \to 0$ as $n\to\infty$. In rough terms, therefore,  $\sigma^2_1(t)$ must tend to zero faster than $1/\log t$ in order for $Y_0(t)$ to tend to zero as $t\to\infty$, a.s. 

We now give a characterisation of the a.s. convergence and boundedness of $Y$. In broad terms, we have convergence to zero (respectively, boundedness) for the ``doubly'' perturbed equation if and only if both the ``singly''
perturbed equations tend to zero (respectively, are bounded).
\begin{theorem} \label{thm.Yto0bddchar}
Let $f,\sigma$ obey \eqref{eq.fcns} and \eqref{eq.sigma}. Let $Y$ be the unique continuous adapted solution to \eqref{eq.Y}, and suppose $S$ is given by \eqref{eq.S}. 
\begin{itemize}
	\item[(a)] The following are equivalent:
	\begin{enumerate}
		\item[(A)] There exists $\Delta>0$ such that $f_\theta(t) \to 0$ as $t\to\infty$ for all $\theta\in [0,\Delta]$, $S(\epsilon)<+\infty$ for all $\epsilon>0$;
		\item[(B)] $Y(t)\to 0$ as $t\to\infty$ a.s. 
	\end{enumerate}
	\item[(b)] The following are equivalent:
	\begin{enumerate}
		\item[(A)] There exists $\Delta>0$, $\bar{F}>0$ and $\epsilon'>0$ such that 
		\[
		\sup_{0\leq \theta\leq \Delta}|f_\theta(t)|\leq \bar{F}, \quad t\geq 0, \quad \text{and}\quad 
		S(\epsilon)<+\infty \quad \epsilon>\epsilon'.
		\]
		\item[(B)] $Y$ is a.s. bounded. 
	\end{enumerate}
\end{itemize} 
\end{theorem}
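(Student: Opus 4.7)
}
The plan is to exploit the decomposition $Y(t)=y(t)+Y_0(t)$ given by \eqref{eq.Yrep}, together with the deterministic characterisation results (Theorems~\ref{thm.detasymptoticstab}, \ref{thm.detasymptoticbound} and \ref{lemma.fthetaymonotone}) and the stochastic characterisation Theorem~\ref{thm.Y0asy}. The crucial observation is that $y$ is deterministic while $Y_0(t)\sim N(0,v(t))$ with $v(t)=e^{-2t}\int_0^t e^{2s}\sigma^2(s)\,ds$; this Gaussian structure is the bridge that lets us pull apart the two ``halves'' of $Y$ when only information about $Y$ is available.

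For the forward direction $(A)\Rightarrow(B)$ in each part, the argument is essentially a sum. In part (a), the $f_\theta$ condition yields $y(t)\to 0$ via Theorem~\ref{thm.detasymptoticstab}; simultaneously, the assumption $S(\epsilon)<\infty$ for every $\epsilon>0$ gives $Y_0(t)\to 0$ almost surely via Theorem~\ref{thm.Y0asy}(a). Summing, $Y(t)\to 0$ a.s. In part (b), boundedness of $\sup_\theta|f_\theta(t)|$ gives boundedness of $y$ via Theorem~\ref{lemma.fthetaymonotone}(i) with $\gamma\equiv 1$, and the $S$--condition yields $Y_0$ bounded a.s. via Theorem~\ref{thm.Y0asy}(b). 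Sum to conclude.

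The reverse implication $(B)\Rightarrow(A)$ is the main obstacle, because \emph{a priori} we only control the sum $y+Y_0$. The plan is to first extract boundedness/decay of the deterministic part $y$ by exploiting the Gaussian law of $Y(t)\sim N(y(t),v(t))$. For part (b), a.s. boundedness of $Y$ means $P(\sup_t|Y(t)|\leq M)\geq 1/2$ for some $M>0$, hence $P(|Y(t)|\leq M)\geq 1/2$ for all $t\geq 0$. Since the probability mass of $N(0,v(t))$ in any interval of length $2M$ is at most $2\Phi(M/\sqrt{v(t)})-1$ (maximised when the interval is centred at $0$), the inequality forces $v(t)\leq V$ uniformly. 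With $v$ bounded, the same inequality, now translated to an interval of length $2M$ centred at $-y(t)$, forces $y$ to be bounded (otherwise a standard Gaussian tail estimate drives the probability to zero along a subsequence with $|y(t_n)|\to\infty$). Then $y$ bounded $\Rightarrow$ condition (A) on $f_\theta$ via Theorem~\ref{thm.detasymptoticbound}, and $Y_0=Y-y$ is a.s. bounded, so Theorem~\ref{thm.Y0asy}(b) yields the required $\epsilon'$.

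For part (a), $Y(t)\to 0$ a.s. implies convergence in probability to $0$. Since $Y(t)\sim N(y(t),v(t))$ and convergence in distribution of Gaussians to the constant $0$ is equivalent to $y(t)\to 0$ and $v(t)\to 0$ (by convergence of characteristic functions), we deduce $y(t)\to 0$. Theorem~\ref{thm.detasymptoticstab} then gives the $f_\theta$ condition. Moreover $Y_0(t)=Y(t)-y(t)\to 0$ a.s., so Theorem~\ref{thm.Y0asy}(a) yields $S(\epsilon)<\infty$ for every $\epsilon>0$. The only delicate points will be the Gaussian calculations in part (b) and the characteristic--function step in part (a); neither is deep, but care is needed to rule out ``cancellation'' scenarios in which $y$ is unbounded and is compensated pathwise by an unbounded $Y_0$, which the Gaussian law forbids.
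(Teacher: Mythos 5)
Your proposal is correct and follows essentially the same route as the paper: decompose $Y=y+Y_0$, establish (A)$\Rightarrow$(B) by summing the deterministic result (Theorems~\ref{thm.detasymptoticstab}/\ref{thm.detasymptoticbound}) and the stochastic characterisation (Theorem~\ref{thm.Y0asy}), and establish (B)$\Rightarrow$(A) by exploiting the Gaussian law $Y(t)\sim N(y(t),v(t))$ to extract control of the deterministic mean $y$, after which $Y_0=Y-y$ inherits the a.s.\ behaviour of $Y$ and Theorem~\ref{thm.Y0asy} closes the argument. For part (a) the two treatments are the same in all but phrasing (both reduce to the fact that a sequence of Gaussians converging in distribution to the degenerate law at $0$ has means converging to $0$).

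The one point of genuine divergence is the derivation of boundedness of $y$ in part (b). The paper uses a scaling device: for any function $m(t)\uparrow\infty$, a.s.\ boundedness of $Y$ forces $Y(t_n)/m(t_n)\to 0$ a.s., hence in distribution, forcing $y(t_n)/m(t_n)\to 0$; choosing $m=\sqrt{y^\ast}$ with $y^\ast$ the running maximum of $|y|$ produces a contradiction if $y$ is unbounded. You instead argue via Gaussian concentration: $\mathbb{P}(|Y(t)|\leq M)\geq 1/2$ together with the unimodality of the normal forces $v(t)$ to be bounded, and then forces $y(t)$ to be bounded via a tail estimate. Your route is correct (and could even be shortened: $\mathbb{P}(Y(t)\leq M)\geq 1/2$ directly gives $\Phi((M-y(t))/\sqrt{v(t)})\geq 1/2$, hence $y(t)\leq M$, and symmetrically $y(t)\geq -M$, without bounding $v$ first). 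The two arguments draw on the same Gaussian structure and are roughly equivalent in depth; the paper's is slightly slicker in one line, yours is perhaps more transparent about why ``pathwise cancellation'' between an unbounded $y$ and an unbounded $Y_0$ is impossible. No gap.
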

\begin{proof}
Note $Y(t)=Y_0(t)+y(t)$ for $t\geq 0$, where $y$ is the solution of \eqref{eq.y}. 
For part (a), we prove first  (A) implies (B). The first part of (A) gives $y(t)\to 0$ as $t\to\infty$, by the implication in Theorem~\ref{thm.detasymptoticstab} that (B) implies (A). The second part of (A) gives $Y_0(t)\to 0$ as $t\to\infty$ a.s., by part (a) of Theorem~\ref{thm.Y0asy}. Hence $Y(t)\to 0$ as $t\to\infty$.

To show (B) implies (A), note $Y(t)$ is Gaussian distributed for each $t$, with mean $y(t)$. Consider any sequence $t_n\uparrow \infty$. Then $Y_n:=Y(t_n)$ converges to zero a.s., by (B). But since $Y_n$ is Gaussian, it means that it also converges in distribution, which forces $\mathbb{E}[Y_n]\to 0$ as $n\to\infty$, or $y(t_n)\to 0$ as $n\to\infty$. Suppose now, by way of contradiction, that $\limsup_{t\to\infty} |y(t)|>0$. 
Then there exists $c>0$ and a sequence $t_n\uparrow \infty$ such that $|y(t_n)|>c$ for all $n$ sufficiently large. But this contradicts the fact that (B) implies $y(t_n)\to 0$ as $n\to\infty$ for any divergent sequence $(t_n)$. Hence, we must have $\limsup_{t\to\infty} |y(t)|=0$, or $y(t)\to 0$ as $t\to\infty$. But by the implication in Theorem~\ref{thm.detasymptoticstab} that (A) implies (B), we must have that $\int_t^{t+\theta} f(s)\,ds\to 0$ as $t\to\infty$, which proves the first part of (A). To prove the second part, note we have shown $y(t)\to 0$ as $t\to\infty$, and assumed $Y(t)\to 0$ as $t\to\infty$ a.s. Hence $Y_0(t)=Y(t)-y(t)\to 0$ as $t\to\infty$ a.s. Now apply part (a) of Theorem \ref{thm.Y0asy} to see that  we must have $S(\epsilon)<+\infty$ for all $\epsilon>0$, which is the second part of (A). Thus we have shown that (B) implies (A), establishing the required equivalence to complete the proof of part (a).  

For part (b), we prove first (A) implies (B). Note the first part of (A) implies $y$ is bounded, applying Theorem~\ref{thm.detasymptoticbound}
with  $\gamma(t)\equiv 1$. The second part of (A) implies $Y_0$ is a.s. bounded, by part (b) of Theorem \ref{thm.Y0asy}. Thus $Y=Y_0+y$ is a.s. bounded, giving (B). 

To show that (B) implies (A), it is enough to show that $y$ is bounded; in that case the first part of (A) holds, by the converse half of Theorem~\ref{thm.detasymptoticbound}
with  $\gamma(t)\equiv 1$. On the other hand, if $y$ is bounded, then $Y_0=Y-y$ is a.s. bounded, by hypothesis. Thus, by part (b) of Theorem \ref{thm.Y0asy}, the second part of (A) holds, and hence (B) implies (A). Hence (A) and (B) are equivalent, establishing part (b) of the theorem. 

It remains to show that (B) implies  $y$ is bounded. Let $m$ be any positive, non--decreasing continuous function with $m(t)\to \infty$ as $t\to\infty$ 
and $(t_n)$ any sequence such that $t_n\uparrow \infty$ as $n\to\infty$. 
Now, $Y_n:=Y(t_n)/m(t_n)$ is a sequence of Gaussian random variables with mean $y(t_n)/m(t_n)$, and $Y_n\to 0$ as $n\to\infty$ a.s. since $Y$ is a.s. bounded. But then $Y_n$ converge in distribution, so $y(t_n)/m(t_n)\to 0$ as $n\to\infty$.   

Suppose next, by way of contradiction, that $\limsup_{t\to\infty} |y(t)|=+\infty$. Then $y^\ast(t):=\max_{0\leq s\leq t} |y(s)|\to\infty$ as $t\to\infty$. Pick $m(t)=\sqrt{y^\ast(t)}$, so that $m(t)\to\infty$ as $t\to\infty$, and let $t_n$ be a sequence along which $y^\ast(t) = |y(t)|$. Then 
$|y(t_n)|/m(t_n) = y^\ast(t_n)/\sqrt{y^\ast(t_n)} = \sqrt{y^\ast(t_n)}\to \infty$ as $n\to\infty$, 
which contradicts what was shown at the end of the last paragraph. Hence the supposition $\limsup_{t\to\infty} |y(t)|=+\infty$ is false, and $y$ is bounded, as required. 
\end{proof}
This result, together with the discussion before Theorem~\ref{thm.Y0asy},  gives a characterisation of stability and  boundedness for \eqref{eq.stochforced2}.
\begin{theorem}\label{thm.stochstabtilX1}
Suppose that $q\in (0,1)$ and \eqref{eq.unforcedasstable} holds. Let $f,\sigma\in C([0,\infty);\mathbb{R})$ and $X$ and $Y$ be the unique continuous adapted solutions to \eqref{eq.stochforced2} and \eqref{eq.Y}. Then
\begin{enumerate}
	\item[(i)] Condition (a)(A) of Theorem~\ref{thm.Yto0bddchar}
	$\Longleftrightarrow$ $X(t)\to 0$ as $t\to\infty$ a.s.;
	\item[(ii)] Condition (b)(A) of Theorem~\ref{thm.Yto0bddchar} $\Longleftrightarrow$  $X$ is bounded a.s. 
\end{enumerate}
\end{theorem}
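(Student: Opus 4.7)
The plan is to reduce Theorem~\ref{thm.stochstabtilX1} to Theorem~\ref{thm.Yto0bddchar} by establishing the pathwise equivalences $Y(t)\to 0$ as $t\to\infty$ a.s.\ $\Longleftrightarrow$ $X(t)\to 0$ as $t\to\infty$ a.s., and $Y$ bounded a.s.\ $\Longleftrightarrow$ $X$ bounded a.s. Once these are in hand, Theorem~\ref{thm.Yto0bddchar}(a) converts the first into the stated equivalence with condition (a)(A), and Theorem~\ref{thm.Yto0bddchar}(b) does the same for (b)(A), finishing both parts simultaneously.

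For the direction $Y \Rightarrow X$, I would use the decomposition $Z := X - Y$ already introduced before the theorem. By \eqref{eq.Z}, $Z$ solves the perturbed pantograph equation with forcing $\phi(t) = aY(qt) + (1+b)Y(t)$, which is continuous on each sample path. Fix $\omega$ in the full-probability event on which $Y(\cdot,\omega) \to 0$: then $\phi(\cdot,\omega) \to 0$, so part (a) of Theorem~\ref{thm.1}, applied pathwise, yields $Z(t,\omega) \to 0$; hence $X(t,\omega) = Z(t,\omega) + Y(t,\omega) \to 0$. The boundedness case is identical with part (b) of Theorem~\ref{thm.1}: if $Y(\cdot,\omega)$ is bounded, then so is $\phi(\cdot,\omega)$, and so is $Z(\cdot,\omega)$, whence $X(\cdot,\omega) = Z(\cdot,\omega) + Y(\cdot,\omega)$ is bounded.

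For the converse $X \Rightarrow Y$, I would use the representation \eqref{eq.YreptildeX},
\[
Y(t) = X(t) - \xi e^{-t} - \int_{0}^{t} e^{-(t-s)}\{(1+b)X(s) + aX(qs)\}\,ds,
\]
which holds pathwise. On any sample path where $X(\cdot,\omega) \to 0$, the bracketed integrand tends to zero, and since it is a bounded continuous function convolved against the $L^{1}(0,\infty)$ kernel $e^{-\cdot}$, the convolution also tends to zero; together with $\xi e^{-t}\to 0$ this gives $Y(t,\omega)\to 0$. In the boundedness case, if $X(\cdot,\omega)$ is bounded by some $M(\omega)$, then the integrand is bounded, so the convolution is bounded (by $M(\omega)(|1+b|+|a|)$), and hence $Y(\cdot,\omega)$ is bounded.

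The only point requiring mild care, which I would view as the main obstacle, is measurability and the handling of null sets: one must verify that the almost-sure events on which $X$, $Y$, $Z$ and $\phi$ are continuous and enjoy the stated asymptotic properties can be intersected into a single full-probability event on which all the pathwise arguments above apply simultaneously. Since $X$, $Y$, $Z$ are continuous adapted processes and the events $\{X(\cdot,\omega)\to 0\}$ and $\{X(\cdot,\omega)\text{ bounded}\}$ are $\mathcal{F}$-measurable, this is routine. Combining the resulting equivalences with Theorem~\ref{thm.Yto0bddchar}(a), (b) completes the proofs of (i) and (ii) respectively.
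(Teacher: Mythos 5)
Your proposal is correct and follows essentially the same route as the paper: the paper establishes, in the discussion surrounding the decomposition $Z=X-Y$ and the representation \eqref{eq.YreptildeX} (just before Theorem~\ref{thm.Y0asy}), the pathwise equivalences $Y\to 0$ a.s.\ $\Leftrightarrow$ $X\to 0$ a.s.\ and $Y$ bounded a.s.\ $\Leftrightarrow$ $X$ bounded a.s., invoking Theorem~\ref{thm.1} pathwise for one direction and the convolution estimate in \eqref{eq.YreptildeX} for the other, and then composes these with Theorem~\ref{thm.Yto0bddchar} exactly as you do. Your added remark on intersecting full-measure events is a reasonable piece of bookkeeping that the paper leaves tacit.
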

Some remarks are in order; first, Theorem~\ref{thm.Yto0bddchar} is to the best of our knowledge (and remarkably, given the simplicity of the equation) the first pathwise characterisation of asymptotic stability and boundedness for a stochastic differential equation which is subjected to both deterministic and stochastic perturbations. \textit{We consider this an auxiliary result of tremendous importance}. We see straightaway that it can be used to characterise the a.s. convergence and boundedness of the stochastically and deterministically forced pantograph equation, an equation far more complicated that the SDE, revealing that \textit{the stability conditions are independent of the structure of the underlying differential system}. Again, such a stability characterisation for deterministically and stochastically perturbed functional differential equations is, to the best of our knowledge, the first to appear in the literature.  

To see the potential for Theorem~\ref{thm.Yto0bddchar} to have very wide applicability to characterise many other complicated differential systems which are forced state--independently, we give a sketch of the possible direction of research. Suppose, concretely, we have the following stochastic functional differential equation
\[
dX(t)=(F(t,X_t)+f(t))\,dt + \sigma(t)\,dB(t), \quad t\geq 0,
\]    
where $F$ is a functional on $[0,\infty)\times C([0,\infty))$. We view $X$ as a perturbation of the underlying deterministic system $u'(t)=F(t,u_t)$. We use the common notational convention for functional differential equations that for a continuous function $x:\mathbb{R}^+\to \mathbb{R}$, $x_t$ refers to the history of $x$ i.e., to $\{x(s):0\leq s\leq t\}$. Now, consider $Y$ as above, and let $Z:=X-Y$. Then 
\[
Z'(t)=F(t,Z_t+Y_t)+Y(t), \quad t\geq 0.
\]  
Let $F$ satisfy a uniform functional Lipschitz condition in the state i.e., if   for all $\phi$, $\psi\in C(\mathbb{R}^+;\mathbb{R})$, there exists a constant $K>0$ such that 
\[
|F(t,\phi_t)-F(t,\psi_t)|\leq K\max_{0\leq s\leq t}|\phi(s)-\psi(s)|, \quad t\geq 0.
\]
Note that this condition is sufficient to guarantee a unique continuous adapted solution of the original SFDE on $[0,\infty)$ (see e.g., Mao~\cite{MaoBK}). 
Thus, writing $\tilde{f}(t):=F(t,Z_t+Y_t)-F(t,Z_t)+Y(t)$ for $t\geq 0$, we see that $\tilde{f}(t)\to 0$ as $t\to\infty$, $\tilde{f}$ is continuous on $[0,\infty)$, and $Z$ obeys 
\[
Z'(t)=F(t,Z_t)+\tilde{f}(t), \quad t\geq 0.
\] 
Now, if a good perturbation theory exists for $C_0(\mathbb{R}^+;\mathbb{R})$ perturbations of the equation $u'(t)=F(t,u_t)$, we may be able to show that $Z(t)\to 0$ as $t\to\infty$, and hence that $X(t)=Y(t)+Z(t)\to 0$ as $t\to\infty$. 

In the converse direction, stability of the unperturbed functional differential equation means that we should impose the condition 
\[
F(t,\phi_t)\to 0 \quad \text{as $t\to\infty$ for each $\phi\in C_0(\mathbb{R}^+;\mathbb{R})$}.
\]
By our usual arguments, $Y$ can be connected to $X$ via
\[
Y(t)=X(t)-X(0)e^{-t}-\int_{0}^t e^{-(t-s)}\{X(s)+F(s,X_s)\}\,ds, \quad t\geq 0.
\]
Therefore, if $X(t)\to 0$ as $t\to\infty$, the term in curly brackets in the integrand is a continuous function in $s$, and tends to $0$ as $s\to\infty$. Therefore, the integral tends to zero as $t\to\infty$, and hence $Y(t)\to 0$ as $t\to\infty$. From this brief discussion, it can be seen how an effective convergence theory for many ``doubly perturbed'' functional differential equations can be developed. In rough terms, provided the asymptotic stability of the unperturbed is robust to deterministic $C_0$ perturbations, then the convergence of the stochastic FDE system is equivalent to the convergence of the SDE, which has already been characterised in Theorem \ref{thm.Yto0bddchar}. 
\vspace{-0.25in}

\section{Developments and Further Work}
The principal purpose of this paper was to develop necessary and sufficient conditions for the convergence of perturbed solutions of a celebrated unbounded delay differential equation to the  equilibrium solution of the underlying unforced equation. Making these very particular choices, we have demonstrated that the forcing terms which are necessary and sufficient to preserve the asymptotic stability are precisely those needed to give the convergence to equilibrium of simple linear differential equations (whether deterministic or stochastic). 

One reason to focus on the pantograph equation, rather than to consider more general equations, is to exploit the fact that the asymptotic behaviour of the unperturbed pantograph equation is (essentially) completely understood, whereas for equations with general unbounded delay, and finite dimensional equations, for example, complete characterisations of the asymptotic behaviour are not always available. This makes it harder to determine whether gaps in the analysis for perturbed equations are purely the result of shortcomings in the perturbed (or stochastic) analysis, or may instead reflect the incomplete state of knowledge in the unperturbed systems. The abundance of necessary and sufficient conditions for various phenomena in the preceding questions supports this conservative approach.   

On the other hand, pantograph equations represent only a particular example of equations with unbounded delay, and it can be the case that the arguments used to analyse scalar equations can possess special features which make generalisation to higher dimensions difficult, or even impossible. Therefore, in this section, we give  arguments to show that a number of generalisations are possible, despite possibly incomplete knowledge about underlying unperturbed equations, or other important auxiliary equations (e.g., the SDE satisfied by $Y$). Leaving aside questions about growth or decay rates for general problems momentarily, several avenues for generalisation may be readily stated:
\begin{enumerate}
\item[(i)] Equations with general unbounded delay, rather than simply proportional delay;
\item[(ii)] Multi--dimensional equations;
\item[(iii)] Equations with multiplicative, rather than additive noise e.g., equations of the form 
\[
dX(t)=(bX(t)+aX(qt))\,dt + \sigma X(t)\,dB(t), \quad t\geq 0.
\]
\end{enumerate}   
In the following subsections, we sketch details of how the results in the earlier part of the paper can be generalised (rather successfully) to cover these cases. In cases (i) and (ii), the machinery developed in earlier sections can mostly be generalised in reasonably obvious ways (with some new problems emerging), while in case (iii), although the approach of the earlier part of the paper is not useful in the particulars, it  nevertheless suggests an analogous approach to attacking the equations in question which seems to generate some interesting preliminary results.    
\subsection{Equations with general unbounded delay}
For the first question, the framework of the present work is sufficient to provide a positive answer. Consider now the SFDE
\begin{equation}\label{eq.sfde}
dX(t)=(bX(t)+aX(t-\tau(t)))+f(t))\,dt + \sigma(t)\,dB(t), \quad t\geq 0,
\end{equation}
where $\tau$ is a positive continuous function with $\sup_{t\geq 0} t-\tau(t)=:-\bar{\tau}>-\infty$ and $t-\tau(t)\to \infty$ as $t\to\infty$. In this case, we need an initial function $\psi\in C([-\bar{\tau},\infty);\mathbb{R})$ so that if  $X=\psi$ on $[-\bar{\tau},0]$, then there will exist a unique continuous adapted solution of the equation on $[-\bar{\tau},\infty)$. Confining attention to the situation when $X(t)\to 0$ as $t\to\infty$ a.s., we may proceed as above to show that if $X(t)\to 0$ as $t\to\infty$ a.s., then $Y(t)\to 0$ as $t\to\infty$ a.s. We do this by writing $dX(t)=\{-X(t)+(1+b)X(t)+aX(t-\tau(t))+f(t)\}\,dt + \sigma(t)\,dB(t)$, and now, by treating everything apart from $-X(t)$ on the right hand side as a perturbation, using stochastic integration by parts, and rearranging for $Y$, we get  
\begin{equation} \label{eq.YXgeneral}
Y(t)= X(t)-e^{-t}X(0)-\int_{0}^t e^{-(t-s)}\{(1+b)X(s)+aX(s-\tau(s))\}\,ds, \quad t\geq 0.
\end{equation}
Since $t-\tau(t)\to \infty$ as $t\to\infty$, the term in curly brackets tends to zero as $s\to\infty$, so the integral tends to zero, and thus $Y(t)\to 0$ as $t\to\infty$ a.s., as claimed.

To show $Y(t)\to 0$ as $t\to\infty$ implies  $X(t)\to 0$ as $t\to\infty$ a.s., we may define 
$Z(t)=X(t)-Y(t)$ for $t\geq -\bar{\tau}$ (extending $Y$ to be zero on $[-\bar{\tau},0]$), in which case $Z=\psi$ on $[-\bar{\tau},0]$ and $Z$ obeys
\[
Z'(t)=bZ(t)+aZ(t-\tau(t))+\phi(t), \quad t\geq 0
\]  
where $\phi(t)=(1+b)Y(t)+aY(t-\tau(t))$ for $t\geq 0$. Clearly, if $Y(t)\to 0$ as $t\to\infty$ a.s., then $\phi(t)\to 0$ as $t\to\infty$ a.s. Evidently, if $Y(t)\to 0$ as $t\to\infty$ a.s., then $Z(t)\to 0$ as $t\to\infty$ a.s. is a necessary and sufficient condition to ensure that $X(t)\to 0$ as $t\to\infty$ a.s.

Thus, under the condition that the perturbations $f$ and $\sigma$ are ``small'' in such a way that the solutions of the SDE converge to zero, the question of whether deterministic and stochastic perturbations which preserve the convergence of solutions to the equilibrium in the complicated SFDE reduces to whether the deterministically perturbed FDE is convergent if there is a fading perturbation. By virtue of Cermak~\cite{cermak}, it is known that the conditions $b<0$ and $|a|<|b|$ are sufficient for the solution of the unperturbed equation 
\[
u'(t)=bu(t)+au(t-\tau(t)), \quad t\geq 0,
\]
to obey $u(t)\to 0$ as $t\to\infty$. Keeping this condition on $a$ and $b$, we can take the equation for $Z$ and show that $Z$ obeys
\[
D_+|Z(t)|\leq b|Z(t)|+|a||Z(t-\tau(t))|+|\phi(t)|,\quad t\geq 0,
\]
where $D_+$ is the Dini--derivative earlier introduced. Now, by comparison arguments, $|Z(t)| < z(t)$ for $t\geq -\tau$ where $z=|\psi|+1$ on $[-\tau,0]$ and 
\[
z'(t)=bz(t)+|a|z(t-\tau(t))+|\phi(t)|+e^{-t}, \quad t\geq 0.
\]
See Appleby and Buckwar~\cite{AppBuck10}, for example. Finally, it remains to show that $z(t)\to 0$ as $t\to\infty$, and this will give $Z(t)\to 0$ as $t\to\infty$, as needed. 

The proof that $z(t)\to 0$ as $t\to\infty$ can be achieved by means of comparison arguments, showing first that $z$ is bounded, and then by showing that a positive limsup of $z$ leads to a contradiction, and hence to the conclusion that $z(t)\to 0$ as $t\to\infty$. 

The boundedness of $z$ may also be proven by a comparison argument: simply choose a (sufficiently large) constant upper solution to the differential equation for $z$, exploiting the facts that the last terms on the right hand side are uniformly bounded, and that $-b>|a|\geq 0$. The key to the proof of convergence is to view all the terms in $z$, apart from the instantaneous term, as perturbations of the differential equation $v'=bv$,  writing down the variation of constants formula, majorising, and then taking the limsup as $t\to\infty$. From this, it is seen that a positive limsup for $z$ is incompatible with the inequality $-b>|a|\geq 0$.  

Parallel arguments can be developed for the characterisation of bounded solutions. 

Thus, the following theorem can be established.
\begin{theorem}
Let $b<0$, $|a|<|b|$, $\tau\in C([0,\infty);(0,\infty))$ be such that 
$t-\tau(t)\to \infty$ as $t\to\infty$ and $-\bar{\tau}:=\sup_{t\geq 0}(t-\tau(t)) >-\infty$. Let $f$ and $\sigma$ be continuous, and suppose that $\psi\in C([-\bar{\tau},0];\mathbb{R})$. Then there is a unique continuous adapted process $X$ which is the solution of \eqref{eq.sfde} with 
with $X=\psi$ on $[-\bar{\tau},0]$, and moreover:
\begin{enumerate}
	\item[(a)] The following are equivalent:
	\begin{enumerate}
		\item[(A)] $f$ and $\sigma$ obey condition (a)(A) of Theorem \ref{thm.Yto0bddchar};
		\item[(B)] $X(t)\to 0$ as $t\to\infty$ a.s.
	\end{enumerate} 
	\item[(b)] The following are equivalent:
	\begin{enumerate}
		\item[(A)] $f$ and $\sigma$ obey condition (b)(A) of Theorem \ref{thm.Yto0bddchar};
		\item[(B)] $X$ is a.s. bounded. 
	\end{enumerate} 
\end{enumerate}
\end{theorem}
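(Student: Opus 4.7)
The plan is to exploit the decomposition $X = Y + Z$, where $Y$ is the auxiliary SDE solution from \eqref{eq.Y} and $Z := X - Y$, to reduce the questions about the complicated SFDE~\eqref{eq.sfde} to the already characterised questions about $Y$ combined with a pathwise deterministic perturbation argument for $Z$. Theorem~\ref{thm.Yto0bddchar} does the heavy lifting on the $Y$-side; the real work is showing that the convergence (respectively boundedness) of $Y$ propagates to $X$, and that the representation \eqref{eq.YXgeneral} gives the converse direction.

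For the forward implications in both parts, I would first invoke Theorem~\ref{thm.Yto0bddchar} to deduce from (a)(A) (resp.\ (b)(A)) that $Y(t)\to 0$ a.s.\ (resp.\ $Y$ is a.s.\ bounded). Extending $Y$ by zero on $[-\bar{\tau},0]$, the difference $Z=X-Y$ satisfies $Z=\psi$ on $[-\bar\tau,0]$ together with the pathwise random (but deterministic-in-$\omega$) FDE
\[
Z'(t) = bZ(t) + aZ(t-\tau(t)) + \phi(t), \quad t\geq 0,
\]
where $\phi(t)=(1+b)Y(t)+aY(t-\tau(t))$ is continuous and inherits the asymptotic property of $Y$ (using $t-\tau(t)\to\infty$). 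Working sample-path-by-sample-path on the full-measure set of good $\omega$, I would use the Dini derivative estimate
\[
D_+|Z(t)| \leq b|Z(t)| + |a||Z(t-\tau(t))| + |\phi(t)|
\]
and comparison (in the style of \cite{AppBuck10}) with the positive majorant $z$ solving
\[
z'(t) = bz(t) + |a|z(t-\tau(t)) + |\phi(t)| + e^{-t}, \quad z=|\psi|+1 \text{ on }[-\bar\tau,0],
\]
so that $|Z(t)|<z(t)$ pathwise. It then suffices to show $z$ is bounded (resp.\ $z(t)\to 0$). Boundedness is obtained by finding a constant upper solution, which exists because $-b>|a|\geq 0$ and the forcing $|\phi|+e^{-t}$ is bounded. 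Convergence is obtained by rewriting $z$ via the variation of constants formula for the instantaneous term $bz$, taking $\limsup_{t\to\infty}$ on both sides, and using $s-\tau(s)\to\infty$ together with $|\phi(s)|+e^{-s}\to 0$ to derive $\limsup z \leq (|a|/|b|)\limsup z$, whence $\limsup z = 0$ since $|a|<|b|$.

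For the converse implications in both parts, I would deploy the representation \eqref{eq.YXgeneral}, which expresses $Y(t)$ as $X(t)$ minus an exponentially decaying term minus the convolution of the $L^1$ kernel $e^{-t}$ with $(1+b)X(s)+aX(s-\tau(s))$. If $X(t)\to 0$ a.s., then (using $s-\tau(s)\to\infty$) the bracketed integrand is continuous and tends to zero a.s., so convolution with an $L^1$ kernel yields a $BC_0$ function and hence $Y(t)\to 0$ a.s. Likewise, if $X$ is a.s.\ bounded, the convolution is bounded and so is $Y$. Theorem~\ref{thm.Yto0bddchar}, applied to $Y$, then returns conditions (a)(A) and (b)(A) respectively.

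The main obstacle will be the convergence step for $z$ in the forward direction, because the hypothesis $|a|<|b|$ is used at the margin: the variation-of-constants manoeuvre must be carried out cleanly enough that the contraction factor is exactly $|a|/|b|$, and one must first establish \emph{finiteness} of $\limsup_{t\to\infty} z(t)$ (via the boundedness comparison step above) before the inequality $\limsup z \leq (|a|/|b|)\limsup z$ is meaningful. Everything else---measurability of the exceptional set on which $Y$ misbehaves, the pathwise existence of the comparison solution $z$, and the plugging in of Theorem~\ref{thm.Yto0bddchar}---is routine, following the template of the pantograph case treated earlier in the paper.
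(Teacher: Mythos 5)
Your proposal follows essentially the same route as the paper: the additive decomposition $X = Y + Z$, Theorem~\ref{thm.Yto0bddchar} to handle the $Y$-side, the pathwise Dini-derivative comparison with the dominating random FDE solution $z$ (boundedness of $z$ via a constant upper solution first, then $\limsup z \leq (|a|/|b|)\limsup z$ via variation of constants for the instantaneous part), and the representation \eqref{eq.YXgeneral} for the converse. Your explicit observation that finiteness of $\limsup z$ must be secured before the contraction inequality becomes meaningful is exactly the point the paper's sketch relies on.
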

We note that this theorem does not require any smoothness or monotonicity properties on the delay $\tau$, nor on the delayed argument $\text{id}-\tau$. This improves upon the regularity required on $\tau$ in \cite{cermak}, for instance.
\subsection{Decay estimates for general unbounded delay equations}
The question about rates of decay to zero if the solution of \eqref{eq.sfde} demands that a good asymptotic estimate be found to the rate at which $Y(t)\to 0$ as $t\to\infty$. This can then be used to get the rate of decay of $\phi(t)\to 0$ as $t\to\infty$. At this point, it may be hard to use \cite[Theorem 2.3]{cermak} on perturbed equations, since the asymptotic behaviour of $\phi'$ is also needed, and $\phi'$ will not be well--defined for stochastic perturbations, because the non--differentiability of $Y$ will make $\phi$ non--differentiable. There are possible routes to circumvent this problem. One is to invoke a kind of comparison argument in the manner of \cite{AppBuck10} or this paper, but this may not lead to sharp results. Another possibility is to apply a ``double mollification'' to the equation, as follows. Consider $X$, $Y$ and $Z$ as above. Now, define 
$Y_2'(t)=bY_2(t)+\phi(t)$ for $t\geq 0$,
and set $Y_2(t)=0$ for $t\leq 0$. Since $b<0$ and $\phi(t)\to 0$ as $t\to\infty$, we have that $Y_2(t)\to 0$ as $t\to\infty$. Moreover, an upper estimate on the rate of decay of $Y_2$ can be obtained from the rate of decay of $\phi$ (which is known from $Y$). Next, define $Z_2=Z-Y_2$. Then 
\[
Z_2'(t)= bZ_2(t)+aZ_2(t-\tau(t)))+\phi_2(t)
\]
where $\phi_2(t)=aY_2(t-\tau(t))$. Now, provided $\tau$ is continuously differentiable, as it is in Cermak's theory, $\phi_2$ is continuously differentiable, and asymptotic estimates can be obtained for $\phi_2$ and $\phi_2'$ in terms of $Y_2$ and $Y_2'=bY_2+\phi$, all of which can be estimated using pathwise estimates of $Y$. At this point, Cermak's results could be applied off the shelf to give an estimate on $Z_2$. But since $Z=Y_2+Z_2$ and $X=Z+Y$, an estimate on the decay rate of $X$ could be found, again appealing to the decay estimate on $Y$. As a result, we see that a theory that gives good upper bounds for the rate of decay of $X$ can probably be deduced from existing results, provided we can establish good asymptotic estimates for the solution of the SDE. Such a characterisation of the asymptotic behaviour of the SDE will form part of a future work.   

The sharpness of the sufficient conditions generated in this approach could be examined by returning to \eqref{eq.YXgeneral}. If
$X$ has a known a.s. rate of decay, by estimating the terms on the righthand side, an estimate on the necessary rate of decay of $Y$ could be found. 

\subsection{Multidimensional equations with additive noise}
In this subsection, we discuss the multidimensional pantograph equation (rather than equations with general unbounded delay) which is forced both deterministically and stochastically (although, as we mention at the end, most of our argument remains valid for equations with general unbounded delay). We consider $d$--dimensional equations. The analogue of the scalar deterministic perturbation is an $f\in C([0,\infty);\mathbb{R}^d)$ (and we denote its $i$-th component $f_i$). The multidimensional analogue for deterministically perturbed scalar equations is therefore 
\[
x'(t)=Bx(t)+Ax(qt)+f(t), \quad t\geq 0,
\]
where $A$ and $B$ are $d\times d$ real matrices, based on the underlying unforced equation being 
\begin{equation} \label{eq.multidpantou}
u'(t)=Bu(t)+Au(qt), \quad t\geq 0.
\end{equation}
We assume hereinafter that $A$ is not the zero matrix, since in that case all delay differential equations reduce to being  ordinary ones. 

As for the stochastic equation, since there are many dimensions, there may reasonably be many (independent) sources of randomness affecting the dynamics. Focussing on the dynamics of the $i$--th component of the solution vector $X(t)$, if we are to have a state--independent additive noise, we arrive at:
\[
dX_i(t)=\left\{[BX(t)+AX(qt)]_i+f_i(t)\right\}\,dt+\sum_{j=1}^r \sigma_{ij}(t)\,dW_j(t), 
\]
where $W=(W_1,\ldots,W_r)$ is an $r$--dimensional standard Brownian motion (or Wiener process). The $d\times r$ scalar continuous functions $\sigma_{ij}$ can be formed into a $d\times r$ continuous $d\times r$ matrix--values function (its value at time $t$ is denoted $\Sigma(t)$), and then $X$ can be expressed in the differential shorthand 
\begin{equation} \label{eq.stochmultidpanto}
dX(t)=\left\{BX(t)+AX(qt)+f(t)\right\}\,dt+\Sigma(t)\,dW(t), \quad t\geq 0. 
\end{equation}
With $X(0)=\xi\in \mathbb{R}^d$ given, there is a unique continuous adapted process satisfying this equation. 
The question now arises: can we characterise the situation wherein $x(t)\to 0$ as $t\to\infty$ and $X(t)\to 0$ as $t\to\infty$, granted that $u(t)\to 0$ as $t\to\infty$?

As before, we start with a converse result. Let $Y$ be the $d$--dimensional process given by 
\[
dY(t)=\left\{-Y(t)+f(t)\right\}\,dt+\Sigma(t)\,dW(t), \quad t\geq 0,
\] 
with $Y(0)=0\in\mathbb{R}^d$. Then generalising existing arguments to the multidimensional case, we get 
\[
Y(t)= X(t)-e^{-t}X(0)-\int_{0}^t e^{-(t-s)}\{(I_d+B)X(s)+AX(qs)\}\,ds, \quad t\geq 0, 
\] 
where $I_d$ is the $d\times d$ identity matrix. From this, we see that whenever $X(t)\to 0$ as $t\to\infty$ a.s., we have $Y(t)\to 0$ as $t\to\infty$ a.s. This last condition holds provided $Y_i(t)\to 0$ as $t\to\infty$ for each $i$ a.s., and noticing that $Y_i$ obeys the scalar SDE 
\[
dY_i(t)=\{-Y_i(t)+f_i(t)\}\,dt + \sum_{j=1}^r \sigma_{ij}(t)\,dW_j(t),
\]
we see that the equation satisfied by $Y_i$ is of essentially the same form as that satisfied by the solution of \eqref{eq.Y}. In fact, if we define 
\begin{equation} \label{eq.sigmai}
\sigma_i(t):=\left(\sum_{j=1}^r \sigma_{ij}^2(t)\right)^{1/2}, \quad t\geq 0,
\end{equation}
we may use the martingale representation theorem (see e.g., \cite{KS}) to rewrite the continuous martingale 
\[
M_i(t):=\sum_{j=1}^r \int_{0}^t \sigma_{ij}(s)\,dW_j(s), \quad t\geq 0,
\]
as 
\[
M_i(t)=\int_0^t \sigma_i(s)\,d\tilde{B}_i(s), \quad t\geq 0,
\]
where each $\tilde{B}_i$ is a standard one--dimensional Brownian motion (see e.g., Karatzas and Shreve~\cite{KS}). Therefore, $Y_i$ obeys the SDE
\[
dY_i(t)=\{-Y_i(t)+f_i(t)\}\,dt + \sigma_i(t)\,d\tilde{B}_i(t),
\]
which is in precisely the form of \eqref{eq.Y}. Now, define 
$S_i(\epsilon)$ as $S(\epsilon)$ in \eqref{eq.S}, but with $\sigma_i$ in the role of $\sigma$. Since we require $Y_i(t)\to 0$ as $t\to\infty$ for each $i$, a.s., it follows from the converse result (i.e. (B) implies (A)) of part (a) of  Theorem~\ref{thm.Yto0bddchar}, applied to each $Y_i$, that $S_i(\epsilon)<+\infty$ for all $\epsilon>0$ and that for each $f_i$ and each $\theta>0$ the condition 
\[
\int_{t-\theta}^t f_i(s)\,ds \to 0 \quad t\to\infty
\]
holds. This latter condition is nothing but 
\begin{equation} \label{eq.multidimf}
\int_{t-\theta}^t f(s)\,ds\to 0, \quad t\to\infty, \quad \text{for all $\theta>0$}.
\end{equation}
Therefore this condition and $S_i(\epsilon)<+\infty$ for all $\epsilon>0$ and $i\in \{1,\ldots,d\}$ is \textit{necessary} in order that $X(t)\to 0$ as $t\to\infty$ a.s.

Conversely, if each $S_i(\epsilon)<+\infty$ for all $\epsilon>0$ and $i\in \{1,\ldots,d\}$ and $\int_{t-\theta}^t f(s)\,ds\to 0$ as $t\to\infty$, then $Y_i(t)\to 0$ as $t\to\infty$ a.s. for each $i$, and therefore $Y(t)\to 0$ as $t\to\infty$ a.s. As before, set $Z(t):=X(t)-Y(t)$ for $t\geq 0$, and let $\phi(t)=(I_d+B)Y(t)+AY(qt)$ for $t\geq 0$. Then $Z(0)=X(0)$ and $Z$ obeys the perturbed equation 
\begin{equation} \label{eq.pantomultidim}
Z'(t)=BZ(t)+AZ(qt)+\phi(t), \quad t\geq 0.
\end{equation}
Moreover, since $Y(t)\to 0$ as $t\to\infty$ a.s., we have that $\phi(t)\to 0$ as $t\to\infty$ a.s., and since the paths of $Y$ are continuous, $\phi$ is continuous on $[0,\infty)$. We notice that if $Z(t)\to 0$ as $t\to\infty$ a.s., then $X(t)\to 0$ as $t\to\infty$ a.s. 

Therefore, by considering \eqref{eq.pantomultidim} path--by--path (i.e., for each $\omega$ in the a.s. event where $X$ and $Y$ are well--defined, and $Y$ tends to zero), it can be seen that if the \textit{deterministic} property 
\begin{equation} \label{eq.perturbedpropertymultid}
\text{For every $\phi\in C_0([0,\infty);\mathbb{R}^d)$, the solution $Z$ of \eqref{eq.pantomultidim} obeys $Z(t)\to 0$ as $t\to\infty$}
\end{equation}
holds,  
then we have that $Y(t)\to 0$ as $t\to\infty$ a.s. implies $X(t)\to 0$ as $t\to\infty$ a.s. 

We have therefore proven the following result, \textit{which eliminates the need for further analysis of stochastically perturbed equations}, and reduces the study of \eqref{eq.stochmultidpanto} to that of  \eqref{eq.pantomultidim} with deterministic forcing term $\phi\in C_0([0,\infty);\mathbb{R}^d)
$.

\begin{theorem} \label{thm.multi1}
Let $f\in C([0,\infty);\mathbb{R}^d)$ and $\Sigma\in C([0,\infty);\mathbb{R}^{d\times r})$. With $\sigma_{ij}=\Sigma_{ij}$,  define $\sigma_i$ for each $i\in \{1,\ldots,d\}$ according to \eqref{eq.sigmai}. 
Suppose that the property \eqref{eq.perturbedpropertymultid} holds for solutions of \eqref{eq.pantomultidim}. 
Let $X$ be the unique continuous adapted process which obeys \eqref{eq.stochmultidpanto}.
The following are then equivalent:
\begin{enumerate}
	\item[(A)] $f$ obeys \eqref{eq.multidimf} and for each $i$ and $\epsilon>0$ 	\begin{equation} \label{eq.Si}
		S_i(\epsilon):= \sum_{n=1}^\infty \sqrt{\int_{n-1}^n\sigma_i^2(s)\,ds} \exp\left(-\epsilon\int_{n-1}^{n}\sigma_i^2(s)\,ds\right)<+\infty.
	\end{equation}
	\item[(B)] $X(t)\to 0$ as $t\to\infty$ a.s.
\end{enumerate}
\end{theorem}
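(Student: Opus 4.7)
The plan is to follow the blueprint already developed for the scalar case, applying the characterisation of Theorem~\ref{thm.Yto0bddchar}(a) componentwise to the auxiliary multidimensional process $Y$ defined by $dY(t)=\{-Y(t)+f(t)\}\,dt+\Sigma(t)\,dW(t)$ with $Y(0)=0$, and then transferring convergence between $Y$ and $X$ in two directions: using the assumed property~\eqref{eq.perturbedpropertymultid} in one direction, and the stochastic integration-by-parts representation of $Y$ in terms of $X$ in the other.

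First I would establish the componentwise equivalence: $Y(t)\to 0$ a.s. iff (A) holds. Each component satisfies the scalar SDE $dY_i(t)=\{-Y_i(t)+f_i(t)\}\,dt + dM_i(t)$, where $M_i(t):=\sum_j\int_0^t \sigma_{ij}(s)\,dW_j(s)$ is a continuous martingale with quadratic variation $\int_0^t\sigma_i^2(s)\,ds$. By the Dambis--Dubins--Schwarz theorem (on a suitably enlarged probability space if $\sigma_i$ vanishes on sets of positive measure), $M_i$ is representable as $\int_0^t \sigma_i(s)\,d\tilde B_i(s)$ for a standard one-dimensional Brownian motion $\tilde B_i$, so the law of $Y_i$ as a continuous process coincides with the law of the solution of~\eqref{eq.Y} driven by $\tilde B_i$ with perturbation $f_i$ and diffusion $\sigma_i$. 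Since $\{Y_i(t)\to 0\}$ is a measurable event on the path space, depending only on the law, Theorem~\ref{thm.Yto0bddchar}(a) applied coordinatewise gives $Y_i(t)\to 0$ a.s. iff $S_i(\epsilon)<+\infty$ for all $\epsilon>0$ and $\int_{t-\theta}^t f_i(s)\,ds\to 0$ as $t\to\infty$ for all $\theta>0$. Running through $i\in\{1,\ldots,d\}$, this is exactly the condition~(A).

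For (A)$\Rightarrow$(B), set $Z:=X-Y$, so $Z(0)=\xi$ and $Z$ satisfies the deterministically perturbed pantograph equation \eqref{eq.pantomultidim} with $\phi(t)=(I_d+B)Y(t)+AY(qt)$. On the almost sure event where $Y$ has continuous paths and $Y(t)\to 0$, we have $\phi\in C_0([0,\infty);\mathbb{R}^d)$, so hypothesis~\eqref{eq.perturbedpropertymultid} yields $Z(t)\to 0$ pathwise on this event, and $X=Z+Y\to 0$ a.s. Conversely, for (B)$\Rightarrow$(A), one rewrites~\eqref{eq.stochmultidpanto} by isolating $-X(t)$ on the right-hand side and applies stochastic integration by parts to obtain
\[
Y(t)=X(t)-e^{-t}X(0)-\int_0^t e^{-(t-s)}\{(I_d+B)X(s)+AX(qs)\}\,ds,\quad t\geq 0.
\]
On the almost sure event where $X(t)\to 0$, the first two terms vanish trivially, while the integral is the convolution of the $L^1$ kernel $e^{-t}\mathbf 1_{[0,\infty)}$ with a continuous function tending to $0$ at infinity, so it also vanishes. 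Hence $Y(t)\to 0$ a.s., and the componentwise equivalence established above returns (A).

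The main obstacle I anticipate is the martingale representation step: Theorem~\ref{thm.Yto0bddchar} is formulated for a process driven by a single scalar Brownian motion, whereas each $Y_i$ is driven by a linear combination of the independent $W_1,\ldots,W_r$. The Dambis--Dubins--Schwarz reformulation is conceptually straightforward but must be handled with a little care when $\sigma_i$ can degenerate, and the transfer of the almost sure conclusion relies on observing that $\{Y_i(\cdot)\to 0\}$ is a law-determined event on $C([0,\infty);\mathbb{R})$. Everything else is a componentwise application of the already-proved scalar result, together with the elementary convolution argument and the transference property~\eqref{eq.perturbedpropertymultid}, which was set up precisely so that no further analysis of the stochastic equation is required.
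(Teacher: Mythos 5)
Your proposal is correct and follows the same route as the paper: form $Y$, use the componentwise martingale-representation reduction to \eqref{eq.Y} (the paper cites the martingale representation theorem from Karatzas--Shreve rather than DDS, but the intended reduction is the same), apply Theorem~\ref{thm.Yto0bddchar}(a) in each coordinate, transfer from $Y$ to $X$ via $Z=X-Y$ and property~\eqref{eq.perturbedpropertymultid}, and transfer back via the stochastic integration-by-parts representation of $Y$ in terms of $X$. Your explicit flags about the degenerate-$\sigma_i$ case and about $\{Y_i\to 0\}$ being law-determined are sound (though the latter becomes unnecessary once $\tilde B_i$ is realised on the same enlarged space), and otherwise the argument matches the paper's step for step.
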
 
Of course, the theorem would be vacuous if property \eqref{eq.perturbedpropertymultid} does not hold. We now explore the property \eqref{eq.perturbedpropertymultid} a little further, and show that it holds under conditions which are, in a certain sense, reasonable multidimensional analogues of \eqref{eq.unforcedasstable}. Since \eqref{eq.perturbedpropertymultid} must hold in the case $\phi(t)=0$ for all $t\geq 0$, this means the all solution $u$ of the unperturbed pantograph equation 
\eqref{eq.multidpantou} must obey $u(t)\to 0$ as $t\to\infty$. Iserles has shown in \cite{Iser} that this is equivalent to 
\begin{equation} \label{eq.iser1}
\text{All eigenvalues of $B$ have negative real parts}
\end{equation}
(a condition which implies $B$ is invertible), together with the spectral radius condition 
\begin{equation} \label{eq.iser2}
\rho(B^{-1}A)<1.
\end{equation}
Therefore, the conditions \eqref{eq.iser1} and \eqref{eq.iser2} should be imposed on $A$ and $B$. However, to the best of our knowledge, no result exists in the literature which states that \eqref{eq.iser1} and \eqref{eq.iser2} are sufficient to ensure that property \eqref{eq.perturbedpropertymultid} holds. It would clearly be of value to prove such a result, since, if it holds, a complete characterisation of convergence of solutions of  perturbed stochastic and deterministic pantograph equations would be achieved. A cursory reading of the work of Iserles in \cite{Iser}, however, does not lead to optimism that his approach could fulfill this easily. The proofs in that paper rely on analyticity of the solutions of the unperturbed equation, and so--called Dirichlet series solutions that can be formed as a consequence of the analyticity. For stochastic equations at least, the forcing function $\phi$ is continuous, but nowhere differentiable, so $Z$ is a $C^1$ function, but not even $C^2$, for instance.     

We show next, however, that for a subclass of $A$ and $B$ obeying properties \eqref{eq.iser1} and \eqref{eq.iser2}, that property \eqref{eq.perturbedpropertymultid} does hold. We retain property \eqref{eq.iser1}, and note that \eqref{eq.iser2} in a certain sense demands that $A$ be small relative to $B$. Our sufficient condition imposes such a smallness condition on $A$ in lieu of \eqref{eq.iser2}. 

It is worth noting that although our argument is not sharp in the pantograph case, it works without modification for general delay; this is appealing, because, to the best of our knowledge, necessary and sufficient conditions such as \eqref{eq.iser1} and \eqref{eq.iser2} are not known which characterise the stability of unperturbed multidimensional equations with unbounded delay. 

To prove property \eqref{eq.perturbedpropertymultid}, we continue to assume \eqref{eq.iser1}; therefore  it follows that there is a unique symmetric, positive definite $d\times d$ matrix $Q$ (with real entries) such that 
\begin{equation} \label{eq.liapB}
B^TQ+QB=-I_d,
\end{equation}
where, here and in the sequel, the superscripted $T$ denotes the transpose of a matrix. Now consider $z(t):=Z(t)^TQZ(t)$ for $t\geq 0$. We see that $z(t)\in \mathbb{R}$ for each $t\geq 0$. Furthermore, since $Q$ is positive definite, $z(t)\geq 0$ for all $t\geq 0$. Our strategy is to derive a perturbed scalar pantograph differential inequality for $z$. Using \eqref{eq.liapB} we get 
\begin{align}
z'(t)
&=(Z(t)^TB^T+Z(qt)^TA^T+\phi(t)^T)QZ(t)
\nonumber\\
&\qquad+Z(t)^T  (QBZ(t)+QAZ(qt)+Q\phi(t))\nonumber\\
\label{eq.zDDEmaster}
&=-\|Z(t)\|^2+2Z(t)^TQAZ(qt) +2\phi(t)^TQZ(t), \quad t\geq 0.
\end{align}
Notice next that the positive definiteness of $Q$ means that 
$c_1\|Z(t)\|^2\leq z(t)\leq c_2\|Z(t)\|^2$ for some $0<c_1\leq c_2$, where $\|\cdot\|$ is the Euclidean norm on $\mathbb{R}^d$ (in fact, $c_1$ and $c_2$ are the minimum and maximum eigenvalues of $Q$, respectively).

Now we estimate the terms on the righthand side of 
\eqref{eq.zDDEmaster}. 
We start with the last term. By the Cauchy--Schwarz inequality, we get
\[
|2\phi(t)^TQZ(t)|
\leq 2\|\phi(t)\|\|QZ(t)\|
\leq \|\phi(t)\|\|Q\|\|Z(t)\|=:2\epsilon_1(t)\|Z(t)\|,
\]
where $\epsilon_1(t):=\|\phi(t)\|\|Q\|$ is continuous, nonnegative and tends to zero as $t\to\infty$, and for any $d\times d$ matrix $M$, $\|M\|$ denotes the matrix norm of $M$ induced from the Euclidean norm.    
Now, recall that $2xy\leq x^2+y^2$ for all $x,y\geq 0$, so, putting $x=\sqrt{\epsilon_1(t)}$ and $y=\sqrt{\epsilon_1(t)}\|Z(t)\|$, we get 
\begin{equation} \label{eq.estt3}
|2\phi(t)^TQZ(t)|\leq
2\epsilon_1(t)\|Z(t)\|
\leq \epsilon_1(t)+\epsilon_1(t)\|Z(t)\|^2.
\end{equation}
For the second term on the righthand side of \eqref{eq.zDDEmaster}, we have 
$
2Z(t)^TQAZ(qt)=2Z(t)^TQ^TAZ(qt)=2 (A^TQZ(t))^T Z(qt)$,
so, setting $\beta:=\|A^TQ\|>0$ and using the  Cauchy--Schwarz inequality once again, we have 
\begin{align*}
|2Z(t)^TQAZ(qt)|&\leq 2\|A^TQZ(t)\|\|Z(qt)\|\leq 2\beta\|Z(t)\|\|Z(qt)\|\\
&\leq \beta\left(\alpha\|Z(t)\|^2+\frac{1}{\alpha}\|Z(qt)\|^2\right),
\end{align*}
where, for arbitrary $\alpha>0$ we have used the inequality $2xy\leq \alpha x^2+y^2/\alpha$ for $x, y\geq 0$ at the last step. 

Hence, using this estimate and \eqref{eq.estt3}, we may return to \eqref{eq.zDDEmaster} to get  
\[
z'(t)\leq -\|Z(t)\|^2+\beta\alpha\|Z(t)\|^2+\frac{\beta}{\alpha}\|Z(qt)\|^2+\epsilon_1(t)\|Z(t)\|^2 + \epsilon_1(t).
\]
Next, we note that $\beta>0$; to see this, assume to the contrary that $\beta=0$. Then $A^TQ=0$. Since $Q$ is positive definite, $Q$ is invertible. This forces $A=0$, which we have ruled out by hypothesis, and we have the desired contradiction. Thus, we have that $\beta>0$ and we may    pick $\alpha:=1/(2\beta)>0$. Then 
\[
z'(t)\leq -\frac{1}{2}\|Z(t)\|^2
+2\beta^2\|Z(qt)\|^2+\epsilon_1(t)\|Z(t)\|^2 + \epsilon_1(t),\quad t\geq 0.
\]
Since $\epsilon_1(t)\to 0$ as $t\to\infty$, for every $\epsilon\in (0,1/2)$ there is $T(\epsilon)>0$ such that for $t\geq T(\epsilon)$, $\epsilon_1(t)<\epsilon$. Hence for $t\geq T(\epsilon)$, we have 
\[
z'(t)\leq -\left(\frac{1}{2}-\epsilon\right)\|Z(t)\|^2
+2\beta^2\|Z(qt)\|^2+ \epsilon_1(t).
\]
Finally, $\|Z(qt)\|^2\leq c_1^{-1}z(qt)$ and $\|Z(t)\|^2\geq c_2^{-1}z(t)$, so for $\epsilon\in (0,1/2)$ we have 
\begin{equation} \label{eq.zpantoineq}
z'(t)\leq -\left(\frac{1}{2}-\epsilon\right)\frac{1}{c_2}z(t)
+\frac{2\beta^2}{c_1}z(qt)+ \epsilon_1(t), \quad t\geq T(\epsilon).
\end{equation}
Since for any $d\times d$ matrix $M$ the identity $\|M\|=\|M^T\|$ holds for the 2--norm, and $Q$ is symmetric, we have $\beta=\|Q^TA\|=\|QA\|$. Now, suppose that 
\begin{equation} \label{eq.stabcondn2}
4\|QA\|^2< \frac{c_1}{c_2},
\end{equation}
or equivalently, $2\beta^2c_2/c_1<1/2$. Choose 
$\epsilon\in (0,1/2-2\beta^2 c_2/c_1)$. Then the coefficient of $z(t)$ in \eqref{eq.zpantoineq} is negative, and in modulus exceeds the coefficient of $z(qt)$. Since $\epsilon_1(t)\to 0$ as $t\to\infty$, we may apply our scalar theory to the inequality \eqref{eq.zpantoineq} to deduce that  $z(t)\to 0$ as $t\to\infty$. But since $\|Z(t)\|^2\leq c_1^{-1}z(t)$, this implies $Z(t)\to 0$ as $t\to\infty$. 

Thus we see the sufficient conditions required to ensure property \eqref{eq.perturbedpropertymultid} are \eqref{eq.iser1} together with \eqref{eq.stabcondn2} in place of \eqref{eq.iser2}. We discuss now the condition \eqref{eq.stabcondn2}, and show that it retains much of the spirit of the condition \eqref{eq.iser2}.

The righthand side of \eqref{eq.stabcondn2} is the minimal eigenvalue of $Q$, divided by the maximal eigenvalue of $Q$, so is a quantity which depends on $B$, but not on $A$. Therefore, as earlier claimed, \eqref{eq.stabcondn2} can be interpreted as a smallness condition on $A$, similar to \eqref{eq.iser2}, because, using the submultiplicative property of the matrix $2$--norm,
and the fact that  $\|Q\|=c_2$ for this norm,  we see that 
\[
4\|A\|^2< \frac{c_1}{c_2^3}
\]
implies \eqref{eq.stabcondn2}. Note also that the condition \eqref{eq.stabcondn2} is ``delay independent'' in the sense that \eqref{eq.stabcondn2} does not depend on $q$; it also shares this property with the condition \eqref{eq.iser2}. 

Using the fact that $Q=\int_0^\infty e^{Bs}e^{B^Ts}\,ds$ and $\int_0^\infty e^{2Bs}\,ds = -\frac{1}{2}B^{-1}$, we see that we can rewrite \eqref{eq.iser2} and \eqref{eq.stabcondn2} as 
\[
\rho\left(\int_0^\infty e^{2Bs}\,ds A \right)<\frac{1}{2},
\quad
\left\|\int_0^\infty e^{Bs} e^{B^Ts}\,ds A\right\|<\frac{1}{2}\sqrt{\frac{\text{minimal eigenvalue of $Q$}}{\text{maximal eigenvalue of $Q$}}},
\]
respectively. Noting that $\rho(M)\leq \|M\|$ for any matrix $M$, we see the close connection between the conditions.

To see further similarities between \eqref{eq.stabcondn2} and \eqref{eq.iser2}, note in the one--dimensional case that we $b:=B<0$ and $2BQ=-1$. Since $c_1/c_2=1$, the condition \eqref{eq.stabcondn2} is equivalent to  $1=\sqrt{c_1/c_2}>2\|QA\|=2QA=-A/B=-a/b$, and hence in the case $d=1$, \eqref{eq.iser1} and \eqref{eq.stabcondn2} are exactly \eqref{eq.unforcedasstable}, which is exactly \eqref{eq.iser1} and \eqref{eq.iser2} in the case $d=1$. 

One other connection between \eqref{eq.iser2} and \eqref{eq.stabcondn2} is worth pointing out. If $B$ is real diagonal, then $Q=-\frac{1}{2}B^{-1}$. Suppose $A$ is symmetric. Then $QA=B^{-1}A$ is symmetric, and so $\|QA\|=\rho(QA)=\rho(B^{-1}A)$ and \eqref{eq.stabcondn2} becomes 
\[
\rho(B^{-1}A) < \frac{\text{minimal eigenvalue of $|B|$}}{\text{maximal eigenvalue of $|B|$}},
\]
a condition very near in form to,  but more restrictive than, condition \eqref{eq.iser2}.

The above calculations therefore lead to the following theorem, which is a corollary of Theorem~\ref{thm.multi1}.
\begin{theorem}  \label{thm.multi2}
Let $f\in C([0,\infty);\mathbb{R}^d)$ and $\Sigma\in C([0,\infty);\mathbb{R}^{d\times r})$. With $\sigma_{ij}=\Sigma_{ij}$,  define $\sigma_i$ for each $i\in \{1,\ldots,d\}$ according to \eqref{eq.sigmai}. 
Suppose that \eqref{eq.iser1} holds, and with $Q$ defined by \eqref{eq.liapB}, the condition \eqref{eq.stabcondn2} holds. 
Let $X$ be the unique continuous adapted process which obeys \eqref{eq.stochmultidpanto}.
Then the following are equivalent:
\begin{enumerate}
	\item[(A)] $f$ obeys \eqref{eq.multidimf} and for each $i$ and $\epsilon>0$ 	\begin{equation} \label{eq.Si}
		S_i(\epsilon):= \sum_{n=1}^\infty \sqrt{\int_{n-1}^n\sigma_i^2(s)\,ds} \exp\left(-\epsilon\int_{n-1}^{n}\sigma_i^2(s)\,ds\right)<+\infty.
	\end{equation}
	\item[(B)] $X(t)\to 0$ as $t\to\infty$ a.s.
\end{enumerate}
\end{theorem}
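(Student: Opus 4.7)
The plan is to obtain Theorem~\ref{thm.multi2} as a corollary of Theorem~\ref{thm.multi1}. The only additional work required is to verify that, under hypotheses \eqref{eq.iser1} and \eqref{eq.stabcondn2}, the deterministic convergence property \eqref{eq.perturbedpropertymultid} is in force for solutions of the perturbed multidimensional pantograph equation \eqref{eq.pantomultidim}; once that is established, Theorem~\ref{thm.multi1} delivers the equivalence of (A) and (B) without any further probabilistic work. Accordingly, the proof splits into (i) a Liapunov reduction to a scalar pantograph differential inequality, and (ii) an application of the scalar convergence theory developed earlier in the paper.

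For step (i), since \eqref{eq.iser1} holds, the Liapunov equation \eqref{eq.liapB} admits a unique symmetric positive definite solution $Q$, with extreme eigenvalues $0<c_1\le c_2$. Setting $z(t):=Z(t)^TQZ(t)$, I would differentiate along solutions of \eqref{eq.pantomultidim} to obtain \eqref{eq.zDDEmaster}. The perturbation term $2\phi(t)^TQZ(t)$ is controlled by Cauchy--Schwarz followed by the Young--type inequality $2xy\leq x^2+y^2$, using the fact that $\epsilon_1(t):=\|\phi(t)\|\,\|Q\|\to 0$ as $t\to\infty$. The sandwich $c_1\|Z(t)\|^2\le z(t)\le c_2\|Z(t)\|^2$ then allows rewriting the inequality purely in terms of $z$ and $z(q\cdot)$ plus a continuous forcing term that vanishes at infinity, yielding precisely \eqref{eq.zpantoineq}.

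The heart of the argument, and the only delicate point, is the treatment of the cross term $2Z(t)^TQAZ(qt)$, which encodes the delayed coupling. Here I would apply Cauchy--Schwarz with $\beta:=\|QA\|$ together with the weighted inequality $2xy\leq \alpha x^2+y^2/\alpha$ for a free parameter $\alpha>0$, then optimise by taking $\alpha=1/(2\beta)$. This produces a scalar pantograph inequality \eqref{eq.zpantoineq} whose ``instantaneous'' coefficient dominates (in modulus) the ``delayed'' coefficient precisely when $2\beta^2c_2/c_1<1/2$, i.e.\ when \eqref{eq.stabcondn2} holds. Note that $\beta>0$, since if $\beta=0$ then $A^TQ=0$ and the invertibility of $Q$ would force $A=0$, which is excluded. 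Under \eqref{eq.stabcondn2} I can then choose $\epsilon\in(0,\,1/2-2\beta^2c_2/c_1)$, absorbing the small time--varying term $\epsilon_1(t)\|Z(t)\|^2$ into the leading coefficient for large $t$.

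For step (ii), the scalar inequality \eqref{eq.zpantoineq} can be bounded above by the solution of a scalar pantograph equation of the form \eqref{eq.z} with a continuous forcing tending to zero and coefficients satisfying the stability condition \eqref{eq.unforcedasstable}; this is done via a standard comparison argument of the type used in the proof of Theorem~\ref{thm.monotonepert}. Applying Theorem~\ref{thm.1}(a) to this majorant gives $z(t)\to 0$, and hence $\|Z(t)\|\to 0$, as $t\to\infty$. This establishes \eqref{eq.perturbedpropertymultid} and completes the reduction to Theorem~\ref{thm.multi1}. The main obstacle is the cross-term estimate and the verification that the optimised coefficients really collapse to the clean condition \eqref{eq.stabcondn2}; once that is correctly unwound, everything else is routine invocation of machinery already assembled.
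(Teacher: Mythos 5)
Your proposal matches the paper's argument essentially verbatim: Theorem~\ref{thm.multi2} is presented there explicitly as a corollary of Theorem~\ref{thm.multi1}, and the preceding calculations in the paper are precisely the Liapunov reduction you describe — setting $z=Z^TQZ$, estimating the cross term via Cauchy--Schwarz and the weighted Young inequality with the optimal choice $\alpha=1/(2\beta)$, observing that \eqref{eq.stabcondn2} makes the resulting scalar pantograph inequality \eqref{eq.zpantoineq} satisfy the analogue of \eqref{eq.unforcedasstable}, and then invoking the scalar convergence theory (via comparison) to deduce $z(t)\to 0$. The approach and every key step coincide with the paper's; no gaps.
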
 
As stated above, we believe an optimal result would replace the sufficient condition \eqref{eq.stabcondn2} with \eqref{eq.iser2}. 

We finish with the remarks that the sketched results in this section do not really depend on proportional delay, and therefore should hold for any multidimensional equation with unbounded delay, and also that a result characterising boundedness can be formulated and proved with equal ease. 

\subsection{Equations with state--dependent noise}

There are many studies already present concerning the asymptotic behaviour of stochastic pantograph equations with multiplicative noise, or more generally, state--dependent noise. To date, the results on stability, growth bounds etc tend to give upper estimates, and sufficient conditions for asymptotic stability. The question arises, therefore, whether the techniques of this paper might be helpful in obtaining more precise asymptotic results. 

A direct application of the technique is unlikely to be successful in dealing with equations of the form 
\begin{equation} \label{eq.multipanto}
dX(t)=(bX(t)+aX(qt))\,dt+\sigma X(t)\,dB(t)
\end{equation}
since there is no obvious small perturbation to mollify. However, part of the sharpness achieved in this paper is due to 
a successful decomposition of non--differentiable dynamics of the delay--differential equation into an ordinary non--smooth stochastic differential equation, and a pathwise differentiable stochastic delay differential equation, so an adoption of the that aspect of the philosophy of this paper may be useful. We show briefly here how, in the special case that $a>0$, solutions of \eqref{eq.multipanto} have the following asymptotic behaviour. 
\begin{theorem}
Define $\lambda=b-\sigma^2/2$. Let $a>0$ and $X(0)>0$. Let $X$ be the unique strong (positive) solution of \eqref{eq.multipanto}.
\begin{itemize}
	\item[(a)] If $\lambda>0$, then 
	\[
	\lim_{t\to\infty} \frac{1}{t}\log X(t)=\lambda,\quad \text{a.s.}
	\]
	\item[(b)] If $\lambda\leq 0$, then 
	\[
	\lim_{t\to\infty} \frac{1}{t}\log X(t)=0,\quad \text{a.s.}
	\]
\end{itemize}
\end{theorem}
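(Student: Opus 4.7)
The plan is to mimic, for multiplicative noise, the additive decomposition $X = Y + Z$ used in Section~5 for additive noise. Introduce the Dol\'eans--Dade exponential
\[
\psi(t) := \exp\!\left(\sigma B(t) - \tfrac{1}{2}\sigma^2 t\right), \qquad d\psi(t) = \sigma\psi(t)\,dB(t),
\]
a positive continuous martingale playing the role of the \emph{ordinary non--smooth stochastic} part of the dynamics. Set $\tilde X(t) := X(t)/\psi(t)$. It\^o's product rule shows that the stochastic differential cancels, so $\tilde X$ is pathwise absolutely continuous and solves the \emph{random} linear pantograph equation
\[
\tilde X'(t) = b\tilde X(t) + a\eta(t)\tilde X(qt), \qquad \tilde X(0) = X(0)>0,
\]
with positive continuous random coefficient $\eta(t) := \psi(qt)/\psi(t) = \exp\bigl(\sigma(B(qt) - B(t)) + \tfrac{1}{2}\sigma^2(1-q)t\bigr)$. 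Since $B(t)/t\to 0$ a.s., one has $\log\eta(t)/t \to \tfrac{1}{2}\sigma^2(1-q)$ a.s., and the positivity of $X$ (hypothesis) and $\psi$ forces $\tilde X > 0$ a.s.

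Next, I would trap $\tilde X$ pathwise between two purely exponential barriers $z(t) = C e^{\gamma t}$. A direct substitution into the equation for $\tilde X$ shows that $z$ is a supersolution on $[T,\infty)$ iff $\gamma - b \ge a\eta(t)\,e^{\gamma(q-1)t}$ for $t\ge T$, and a subsolution iff the reverse inequality holds. Using the asymptotic for $\log\eta$, the right--hand side has logarithmic rate $(1-q)(\tfrac{1}{2}\sigma^2 - \gamma) + o(1)$, so for any $\epsilon>0$ there is a random threshold $T(\omega,\epsilon)<\infty$ a.s.\ beyond which the supersolution inequality holds whenever $\gamma > \max(b, \sigma^2/2) + \epsilon$, and the subsolution inequality holds whenever $\gamma < \max(b, \sigma^2/2) - \epsilon/(1-q)$. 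Choosing $C_\pm$ to sandwich $\tilde X$ on the history interval $[qT,T]$ and applying a standard comparison principle for linear pantograph equations with positive coefficients (cf.~\cite{AppBuck10}) then gives $z_-(t) \le \tilde X(t) \le z_+(t)$ for all $t\ge T$. Letting $\epsilon\downarrow 0$ yields
\[
\lim_{t\to\infty}\frac{\log\tilde X(t)}{t} = \max\!\left(b,\,\tfrac{1}{2}\sigma^2\right) \quad\text{a.s.}
\]
Combining with $\log\psi(t)/t = \sigma B(t)/t - \sigma^2/2 \to -\sigma^2/2$ a.s.\ (Brownian strong law again), we obtain
\[
\lim_{t\to\infty}\frac{\log X(t)}{t} = \max\!\left(b,\tfrac{1}{2}\sigma^2\right) - \tfrac{1}{2}\sigma^2 = \max(\lambda, 0) \quad\text{a.s.},
\]
which reduces to $\lambda$ in case (a) and to $0$ in case (b).

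The hardest step is the uniform validity, on a common full--measure event, of the super/subsolution comparison. The increment $B(qt) - B(t)$ has variance $(1-q)t$ and hence exhibits law--of--the--iterated--logarithm fluctuations of order $\sqrt{t\log\log t}$, so the convergence of $\log\eta(t)/t$ to $\sigma^2(1-q)/2$ is not uniform; verifying that any fixed $\epsilon$--margin absorbs these fluctuations after a random but a.s.\ finite threshold is delicate but essentially a continuity argument on $\eta$. A secondary subtlety arises precisely in case (b), where the effective growth rate $\sigma^2/2$ of $\tilde X$ strictly exceeds the instantaneous drift $b$: the subsolution cannot then be reduced to the ODE $z' = bz$ and genuinely exploits the unbounded delay coefficient $a\eta(t)$, so the comparison must be carried out at the level of the full pantograph inequality, and this is what ultimately produces the ``$\max(\lambda,0)$" in the conclusion.
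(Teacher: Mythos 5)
Your proof is correct, and it reaches the same conclusion by a closely related but genuinely different multiplicative decomposition. You divide by the \emph{driftless} Dol\'eans exponential $\psi(t)=\exp(\sigma B(t)-\sigma^2 t/2)$, so the transformed process $\tilde X=X/\psi$ still satisfies a genuine pantograph equation $\tilde X'(t)=b\tilde X(t)+a\eta(t)\tilde X(qt)$ with both an instantaneous drift $b$ and a random delay coefficient $a\eta(t)$, and you then treat cases (a) and (b) \emph{uniformly} by trapping $\tilde X$ between exponential super/subsolutions $Ce^{\gamma t}$, obtaining $\lim\log\tilde X(t)/t=\max(b,\sigma^2/2)$ and hence $\lim\log X(t)/t=\max(\lambda,0)$. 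The paper instead divides by the full geometric Brownian motion $\rho$ solving $d\rho=b\rho\,dt+\sigma\rho\,dB$, which \emph{kills} the instantaneous term and leaves the pure delay equation $Z'(t)=a(t)Z(qt)$ with $a(t)=a\,e^{\lambda(q-1)t+\sigma(B(qt)-B(t))}$. This forces a dichotomy: when $\lambda>0$ the coefficient $a(\cdot)$ is a.s.\ in $L^1$, so $Z\uparrow Z^\ast\in(0,\infty)$ and one gets the sharper statement $X(t)\sim Z^\ast\rho(t)$ (not just a Lyapunov exponent); when $\lambda\le 0$ the paper, like you, falls back on comparison with exponential barriers. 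So your route buys a cleaner, unified argument covering both signs of $\lambda$ at once, while the paper's route buys a strictly stronger asymptotic in the $\lambda>0$ case.

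One small comment: the ``hardest step'' you flag — absorbing law-of-the-iterated-logarithm fluctuations of $B(qt)-B(t)$ into an $\epsilon$-margin — is not actually a difficulty. Since $B(t)/t\to 0$ a.s., for a.e.\ fixed $\omega$ the function $t\mapsto\log\eta(t)/t$ is just a deterministic function converging to $\sigma^2(1-q)/2$, so a random (a.s.\ finite, $\omega$-dependent) threshold $T(\omega,\epsilon)$ beyond which the super/subsolution inequalities hold is immediate; no uniformity across $\omega$ is needed, and no separate appeal to the LIL or a ``continuity argument on $\eta$'' is required. The paper's proof uses exactly this pathwise threshold device without further comment.
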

Therefore, the situation is similar to the case for the deterministic pantograph equation (wherein $\sigma=0$), with subexponential asymptotic behaviour when the underlying instantaneous equation 
\[
dv(t)=bv(t)\,dt+\sigma v(t)\,dB(t)
\]
is asymptotically stable (which occurs when $\lambda<0$). When the underlying instantaneous equation is unstable however (when $\lambda>0$), exponential growth occurs, and does so at a rate $\lambda=b-\sigma^2/2>0$.

This result can be established by means of the following multiplicative decomposition. Let 
$\rho$ be given by $\rho(0)=1$ and 
\[
d\rho(t)=b\rho(t)\,dt + \sigma \rho(t)\,dB(t), \quad t\geq 0.
\]
Then $\rho$, like $Y$, is the solution of a linear SDE, and has an explicit solution 
\[
\rho(t) = e^{(b-\sigma^2/2)t+\sigma B(t)}, \quad t\geq 0.
\]
Since $\rho$ is strictly positive, $1/\rho$ is a well--defined process and obeys the SDE
\[
d\left(\frac{1}{\rho(t)}\right) = \frac{\sigma^2-b}{\rho(t)}\,dt -\frac{\sigma}{\rho(t)}\,dB(t), \quad t\geq 0.
\]
The process $Z=X/\rho$ is likewise 
well--defined, and by stochastic integration by parts obeys 
\[
dZ(t)=X(t)d\left(\frac{1}{\rho(t)}\right)+\frac{1}{\rho(t)}dX(t)+\sigma X(t)\cdot -\frac{\sigma}{\rho(t)}\,dt,
\]
which simplifies to 
\[
dZ(t)=\frac{1}{\rho(t)}aX(qt)\,dt,\quad t\geq 0.
\]
Therefore, we have 
\[
Z(t)=X(0)+\int_0^t a Z(qs) \frac{\rho(qs)}{\rho(s)}\,ds, \quad t\geq 0.
\]
Since the integrand is continuous, $Z$ is in fact differentiable, and we have 
\begin{equation} \label{eq.Zstochapuredelay}
Z'(t)= a \frac{\rho(qt)}{\rho(t)}Z(qt)=:a(t)Z(qt), \quad t\geq 0,
\end{equation}
where $\lambda=b-\sigma^2/2$ and $a(t)=a  e^{\lambda (q - 1) t+\sigma(B(qt)-B(t))}$. 
The strategy now is to study the dynamics of $Z$ pathwise, exploiting the fact that $a(t)$ (or equally $\rho$) is known explicitly in terms of the Brownian motion $B$. Finally, we can recover information about $X$ from $X(t)=\rho(t)Z(t)$, again exploiting knowledge of $\rho$. 

When $\lambda>0$, this technique will work well, since in that case, due to the Strong Law of Large Numbers for standard Brownian motion, 
\[
\lim_{t\to\infty} \frac{1}{t}\log |a(t)| = -\lambda(1-q)<0, \quad \text{a.s.},
\]
so 
$a\in L^1(0,\infty)$ a.s. Then, arguing pathwise, $Z$ tends to a finite limit a.s. (call it $Z^\ast$). Therefore 
\[
X(t)\sim Z^\ast \rho(t) = Z^\ast e^{\lambda t+\sigma B(t)}, \quad  t\to\infty, \quad \text{a.s.}, 
\]
so $X$ grows exponentially (under the assumption that the random limit $Z^\ast$ is non--trivial). We have $Z^\ast>0$ in the case that $X(0)>0$ and $a>0$, where $Z(t)\uparrow Z^\ast\in (0,\infty)$ as $t\to\infty$. 

In the case when $Z^\ast=0$, $Z$ tends to  limit $Z^\ast$ according to $\limsup_{t\to\infty} \log|Z(t)|/t\leq -\lambda$. This can be proved by obtaining an a priori exponential rate of decay on $Z$ from \eqref{eq.Zstochapuredelay}, and then recursively estimating the rate of decay by feeding the exponential estimate back into \eqref{eq.Zstochapuredelay}. From the final estimate on $Z$, we end up with $\limsup_{t\to\infty} \log|X(t)|/t\leq 0$. 

Therefore, in the general case (without sign  restrictions on $a$ and $X(0)$), we have a rather complete picture of the long--run dynamics when $\lambda>0$. Either $Z^\ast\neq 0$ and $X$ inherits the asymptotic behaviour of the $\rho$ (and does so rapidly, because $Z$ converges exponentially fast to $Z^\ast$) or $Z^\ast =0$, and $X$ grows more slowly than any exponential function.  

The case $\lambda<0$ is more analogous to the situation covered in this paper, since when $\sigma=0$, $\lambda=b<0$. In that situation, $a(t)$ still has exponential asymptotic behaviour, and for every $\epsilon\in (0,1-q)$ there is a random variable $T(\epsilon)>0$ such that 
\[
D_+|Z(t)|\leq \left(\frac{-\lambda(1-q-\epsilon)}{1-q}+\epsilon\right)e^{-\lambda(1-q-\epsilon)t}|Z(qt)|, \quad t\geq T(\epsilon).
\]
Now, using comparison techniques, one can show by choosing $\bar{Z}$ sufficiently large (take $\bar{Z}>\max_{s\in [qT,T]} |Z(s)|$) that $|Z(t)|\leq \bar{Z} e^{-\lambda(1-q-\epsilon)/(1-q)t}=:Z_+(t)$ for all $t\geq T$. This is done by showing the $Z_+$ obeys the upper differential inequality satisfied by $|Z|$. From this we rapidly deduce that 
\[
\limsup_{t\to\infty} \frac{1}{t}\log |Z(t)|\leq -\lambda, \quad \text{a.s.}
\]
and since $X(t)=\rho(t)Z(t)$ and 
$\lim_{t\to\infty} \log \rho(t)/t=\lambda$ a.s.
we have that 
\[
\limsup_{t\to\infty} \frac{1}{t}\log |X(t)|\leq 0, \quad \text{a.s.}
\]
so that growth of solutions cannot be exponentially fast. 

In the case when $a>0$ and $X(0)>0$, we can strengthen the result to 
\[
\lim_{t\to\infty} \frac{1}{t}\log|X(t)|=0, \quad\text{a.s.},
\]
so the subexponential dynamics $u$ are inherited by $X$. This can be done by building a lower bound for $Z(t)$ by analogous comparison techniques to those outlined above (this works well, because $Z$ is positive, and indeed, increasing), and prove that 
\[
\liminf_{t\to\infty} \frac{1}{t}\log Z(t)\geq -\lambda, \quad \text{a.s.},
\]
which leads to 
\[
\liminf_{t\to\infty} \frac{1}{t}\log X(t)\geq 0, \quad \text{a.s.}
\]
Combining this with the limsup, we get the claimed zero Liapunov exponent. 

The argument in the case when $\lambda=0$ is similar to that when $\lambda<0$.


\section{Proofs}
\begin{proof}[Proof of Lemma~\ref{lemma.fundamental}]
The proof up to \eqref{eq.Knexplicitestimate} follows closely the line of \cite[Theorem 3(i)]{KatoMc}.
By hypothesis, we rule out the case $a=0$, since this reduces the equation to an ODE. With $a\neq 0$, there exists solutions $k\in \mathbb{C}$ to the equation $aq^k+b=0$. Let $k_0$ be any solution of this equation. Then $\kappa=\Re(k_0)$ is uniquely defined and given by $\kappa=-\log(|b/a|)/\log(1/q)$. Let $c=\log q<0$. Then $e^c=q$. Define $w:(-\infty,\infty)\to\mathbb{C}$ by 
$w(s):=e^{-sk_0} z(e^s)$ for $s\in \mathbb{R}$. Then 
\begin{equation} \label{eq.ws+c}
	w(s+c)=e^{-sk_0} z(qe^s)/q^{k_0},
\end{equation}
and since $z$ is differentiable on $(0,\infty)$, for all $s\in\mathbb{R}$, $w'(s)$ exists and is given by 
\begin{align*}
	w'(s)
	&= e^s b(w(s)-w(s+c)) + e^{s-sk_0}\varphi(e^s)-k_0w(s),
\end{align*}
using \eqref{eq.ws+c} and $aq^{k_0}+b=0$. 
Gathering together the $w(s)$ terms on the righthand side and treating the equation as a perturbed linear ODE, we are led to 
\begin{equation} \label{eq.wvop}
	\frac{d}{ds}\left(w(s)e^{k_0s - be^s}\right)
	=-be^se^{k_0s-be^s}w(s+c)+e^{-be^s}e^s\varphi(e^s).
\end{equation}
Next, for $n\in \mathbb{N}$, define $I_n=[s_0-nc,s_0-(n+1)c]$ (note $c<0$), $\tau_n:=s_0-nc$ and $K_n:=\sup_{s\in I_n} |w(s)|$. 
Now, let $s\in I_{n+1}$ and integrate \eqref{eq.wvop} over $[s_0-(n+1)c,s]$:
\[
w(t)e^{k_0 t - be^t}\big|_{t=\tau_{n+1}}^s
=-b\int_{\tau_{n+1}}^s e^t e^{k_0t-be^t}w(t+c)\,dt + \int_{\tau_{n+1}}^s e^{-be^t}e^t\varphi(e^t)\,dt.	
\]
Now, since $b<0$, $-b=|b|>0$ and $s\in I_{n+1}$, taking the complex modulus on both sides, and using the triangle inequality, we get 
\begin{multline*}
	|w(s)||e^{k_0 s}| e^{|b|e^s} 
	\leq |w(s_0-(n+1)c)||e^{k_0(s_0-(n+1)c)}| e^{|b|e^{s_0-(n+1)c}}
	\\+|b|\int_{s_0-(n+1)c}^s e^t |e^{k_0t}|e^{|b|e^t}|w(t+c)|\,dt
	+\int_{s_0-(n+1)c}^s e^{|b|e^t}e^t|\varphi(e^t)|\,dt.	
\end{multline*}
Since $\text{Re}(k_0)=\kappa$, we get 
\begin{multline*}
	|w(s)| e^{\kappa s+|b|e^s} 
	\leq |w(-(n+1)c)|e^{\kappa(s_0-(n+1)c)} e^{|b|e^{s_0-(n+1)c}}
	\\+|b|\int_{s_0-(n+1)c}^s e^t e^{\kappa t+|b|e^t}|w(t+c)|\,dt
	+\int_{s_0-(n+1)c}^s e^{|b|e^t}e^t|\varphi(e^t)|\,dt.	
\end{multline*}
Since the terms in $w$ are bounded by $K_n$, and the term in $\varphi$ by $\epsilon_{n+1}$, we get 
\begin{multline*}
	|w(s)| e^{\kappa s+|b|e^s} 
	\leq K_n \left(e^{\kappa(s_0-(n+1)c)} e^{|b|e^{s_0-(n+1)c}}
	+\int_{s_0-(n+1)c}^s |b|e^t e^{\kappa t+|b|e^t}\,dt\right)
	\\+|b|\int_{s_0-(n+1)c}^s e^{|b|e^t}e^t\,dt \cdot \frac{\epsilon_{n+1}}{|b|}.	
\end{multline*}
Now evaluate the last integral, and bound it by the antiderivative at the upper limit: 
\begin{multline} \label{eq.est1}
	|w(s)| e^{\kappa s+|b|e^s} 
	\leq K_n \left(e^{\kappa(s_0-(n+1)c)} e^{|b|e^{s_0-(n+1)c}}
	-\int_{s_0-(n+1)c}^s be^t e^{\kappa t+|b|e^t}\,dt\right)
	\\+\frac{\epsilon_{n+1}}{|b|} e^{|b|s}.	
\end{multline}
Next, notice that we may take $s_0$ so large that $\kappa-k+|b|/2 e^{s_0-kc}>0$ for all integers $k\geq 0$, which makes $t\mapsto \kappa t+|b|e^t$ monotone on each $I_n$, for any $n$. We focus on the integral remaining on the righthand side in \eqref{eq.est1}. Integrating by parts gives
\begin{multline*}
	\int_{s_0-(n+1)c}^s b e^t e^{\kappa t+|b|e^t}\,dt
	=\frac{be^t}{\kappa-be^t} e^{\kappa t+|b|e^t}\Big|_{s_0-(n+1)c}^s
	\\+\text{O}\left(e^{\kappa s+|b|e^a}\int_{s_0-(n+1)c}^s \left|\frac{d}{ds} \frac{be^t}{\kappa-be^t}\right|\,dt\right),
\end{multline*}
where the big--O term is uniform in $s$, $\kappa$, $s_0$ and $n$, provided $s\in I_{n+1}$ and the stipulations on $s_0$ hold. The integral can therefore be further estimated according to 
\begin{multline*}
	-\left(1-\frac{\kappa}{be^t}\right)^{-1} e^{\kappa t+|b|e^t}\Big|_{s_0-(n+1)c}^s
	+O\left(\kappa e^{-s_0+(n+1)c} e^{\kappa s+|b|e^s}\right)
	\\= -e^{\kappa t+|b|e^t}\Big|_{s_0-(n+1)c}^s
	+O\left(\kappa e^{-s_0+(n+1)c} e^{\kappa s+|b|e^s}\right).
\end{multline*}
Now, we return to \eqref{eq.est1} with this estimate to get
\begin{multline*} 
	|w(s)| e^{\kappa s+|b|e^s} 
	\leq \frac{\epsilon_{n+1}}{|b|} e^{|b|s} \\
	+ K_n \left(e^{\kappa(s_0-(n+1)c)} e^{|b|e^{s_0-(n+1)c}}
	+e^{\kappa t+|b|e^t}\Big|_{s_0-(n+1)c}^s
	+O\left(\kappa e^{-s_0+(n+1)c} e^{\kappa s+|b|e^s}\right)\right).
\end{multline*}
Thus for $s\in I_{n+1}$ we have 
$|w(s)| \leq K_n \left\{1+O\left(\kappa e^{-s_0+(n+1)c} \right)\right\}
+\epsilon_{n+1} e^{-\kappa s}/|b|$.	Taking the sup on both sides over $I_{n+1}$, and noting that $\kappa<0$ yields
\[
K_{n+1}\leq K_n(1+A e^{nc})+\frac{\epsilon_{n+1}}{|b|} e^{-\kappa (s_0-(n+2)c)}
=K_n(1+A e^{(n+1)c})+B'\epsilon_{n+1}e^{\kappa c n},
\]
where $A>0$ 
and $B'>0$. 
Since $e^{-\kappa c}=\alpha\in (0,1)$, with $\lambda_{n+1}=\epsilon_{n+1}/\alpha^{n+1}$ we have 
$K_{n+1}\leq K_n(1+A e^{nc})+B\lambda_{n+1}$ for $n\geq 0$. Thus for $n\geq 2$ 
\[
K_n\leq \prod_{j=0}^{n-1} (1+Ae^{jc}) \cdot K_0 + B\sum_{j=1}^{n-1} \prod_{l=j}^{n-1} (1+Ae^{lc}) \cdot \lambda_j + \lambda_n.
\]
Let $p_n=\prod_{j=0}^{n-1} (1+Ae^{jc})$ for $n\geq 1$. Then $p_n\to p_\ast\in (1,\infty)$ as $n\to\infty$ and so  
\begin{equation} \label{eq.Knexplicitestimate}
	K_n\leq p_n K_0 + B p_n\sum_{j=1}^n \frac{\lambda_j}{p_j}, \quad n\geq 2. 
\end{equation}
Thus, if $\sum_{n=1}^\infty \lambda_n<+\infty$, then $K_n\leq K^\ast$ for all $n\geq 0$. 
On the other hand, if $\sum_{j=1}^n \lambda_j\to \infty$ as $n\to\infty$, then 
$K_n\leq K^\ast \sum_{j=1}^n \lambda_j$ for all $n\geq 1$, by applying Toeplitz lemma to the sum on the righthand side, proving both parts of the lemma. 

If $\lambda_n$ is summable, $K_n\leq K^\ast$. Since $\kappa<0$,  
$\sup_{t\in [e^{s_0-nc},e^{s_0-nc}e^{-c}]} |z(t)|\leq K^\ast [e^{(s_0-nc)}]^\kappa$. Next, for all $t$ sufficiently large there is a (unique) $n\in \mathbb{N}$ such that 
$e^{s_0-nc}\leq t<e^{s_0-nc}e^{-c}$. Since $\kappa<0$, $t^\kappa\geq e^{\kappa(s_0-nc)}e^{-\kappa c} = \alpha e^{\kappa(s_0-nc)}$. Thus
\[
|z(t)|\leq \sup_{t\in [e^{s_0-nc},e^{s_0-nc}e^{-c}]} |z(t)|\leq 
K^\ast [e^{(s_0-nc)}]^\kappa \leq K^\ast \frac{1}{\alpha} t^\kappa,
\]
as required. In the case $\lambda_n$ is not summable, $K_n\leq K^\ast \sum_{j=1}^n \lambda_j$. Since $\kappa<0$, we have 
\[
e^{-\kappa(s_0-nc)}\sup_{s\in [s_0-nc,s_0-(n+1)c]} |z(e^s)|
\leq \sup_{s\in [s_0-nc,s_0-(n+1)c]]} e^{-\kappa s} |z(e^s)|= K_n,
\]
so 
\[
\sup_{t\in [e^{s_0-nc},e^{s_0-nc}e^{-c}]} |z(t)|\leq K_n \alpha^n 
\left(\frac{e^{-\kappa c}}{\alpha}\right)^n e^{\kappa s_0} 
= K_n \alpha^n  e^{\kappa s_0}
\leq K^{\ast\ast} \alpha^n \sum_{j=1}^n \frac{\epsilon_j}{\alpha^j}.
\]
as required. 
\end{proof}
\begin{proof}[Proof of Theorem \ref{thm.growthbounds2}] 
Write $\varphi(t)=\text{O}(\psi(t)t^{\kappa+1})$. We prove the following subcases:
\begin{enumerate}
	\item[(A)] If $\psi\in \text{RV}_{\infty}(-1)$, $\psi\in L^1(0,\infty)$, then 
	$z(t)=\text{O}(t^\kappa)$ as $t\to\infty$.
	\item[(B)] If $\psi\in \text{RV}_{\infty}(-1)$, $\psi\not\in L^1(0,\infty)$, then 
	$	z(t)=\text{O}\left(t^\kappa\int_0^t \psi(s)\,ds\right)$, $t\to\infty$.
	\item[(C)] If  $\psi\in \text{RV}_{\infty}(\beta)$ for $\beta>-1$, then 
	the estimate in (B) holds. Equivalently, if $\varphi=\text{O}(\gamma)$ where $\gamma\in \text{RV}_\infty(\eta)$ and $\eta>\kappa$, then $z(t)=\text{O}(\gamma(t))$ as $t\to\infty$.
	\item[(D)] If  $\psi\in \text{RV}_{\infty}(\beta)$ for $\beta<-1$, then 
	the estimate in (A) holds. Equivalently, if $\varphi=\text{O}(\gamma)$ where $\gamma\in \text{RV}_\infty(\eta)$ and $\eta<\kappa$, then $z(t)=\text{O}(t^\kappa)$ as $t\to\infty$.  
\end{enumerate}	

Since $t\mapsto t^{\kappa+1}\psi(t)$ is regularly varying, and an estimate of the form $|\varphi(t)|\leq Kt^{\kappa+1}\psi(t)$ holds, by the uniform convergence theorem for regularly varying functions, $\epsilon_n = \sup_{s\in [e^{s_0-nc},e^{s_0-nc}e^{-c}]} |\varphi(s)|
\leq \bar{K} e^{(s_0-nc)(\kappa+1)} \psi(e^{s_0-nc})$. 
Since $\psi$ is regularly varying, we get  
$\epsilon_n\leq K^\ast  e^{-nc(\kappa+1)} \psi(e^{-nc})$, so $\lambda_n = \epsilon_n/\alpha^n \leq K^\ast \psi(e^{-nc}) e^{-nc}$. 
By the  uniform convergence theorem for regularly varying functions
\[
\frac{\int_x^{xe^{-c}} \psi(s)\,ds}{\psi(x)x} = \int_1^{e^{-c}} \frac{\psi(\eta x)}{\psi(x)}\,d\eta \to \int_1^{e^{-c}} \eta^\beta\,d\eta =: k>0, \quad x\to\infty,
\]
and so we have that 
\begin{equation} \label{eq.asyconvertsumint}
	e^{-nc} \psi(e^{-nc}) \sim \frac{1}{k} \int_{e^{-nc}}^{e^{-(n+1)c}} \psi(s)\,ds, \quad n\to\infty,
\end{equation}

For part (A), $\psi$ is integrable, so \eqref{eq.asyconvertsumint} shows that $(e^{-nc} \psi(e^{-nc}))$ is summable, and hence $(\lambda_n)$ is summable. Hence by the fundamental lemma, $z(t)=\text{O}(t^\kappa)$, $t\to\infty$. 

For part (D), since $\beta<-1$, $\psi$ is integrable, and by the same argument $z(t)=\text{O}(t^\kappa)$, $t\to\infty$. The second half of the claim follows by making the identification $\psi=\gamma/t^{\kappa+1}$, and since $\eta<\kappa$, $\psi\in \text{RV}_\infty(\beta)$ with $\beta=\eta-\kappa-1<-1$. 

For (B) and (C), $\psi$ is not integrable by hypothesis. Using \eqref{eq.asyconvertsumint}, we have  
\[
\sum_{j=1}^n \lambda_j \leq K^\ast \sum_{j=1}^n \psi(e^{-jc}) e^{-jc}
\leq K^{\ast \ast} \sum_{j=1}^n \frac{1}{k} \int_{e^{-jc}}^{e^{-(j+1)c}} \psi(s)\,ds
\leq \bar{K}  \int_{1}^{e^{-(n+1)c}} \psi(s)\,ds.
\]
Now, since $p_n>1$ and $p_n\to p^\ast$ as $n\to\infty$, by \eqref{eq.Knexplicitestimate}, we get 
\begin{multline*}
	\sup_{t\in [e^{s_0-nc},e^{s_0-nc}e^{-c}]}  |z(t)| \leq \alpha^n K_n
	\leq \alpha^n\left(p_n K_0 + p_n \sum_{j=1}^n \lambda_j/p_j\right)\\
	\leq C_1 \alpha^n + C_2 \alpha^n \sum_{j=1}^n \lambda_j
	\leq C_1 \alpha^n + C_2 \alpha^n \bar{K}  \int_{1}^{e^{-(n+1)c}} \psi(s)\,ds.
\end{multline*}
Since the integral diverges, we have the consolidated estimate
\[
\sup_{t\in [e^{s_0-nc},e^{s_0-nc}e^{-c}]}  |z(t)| \leq C_3 \alpha^n  \int_{1}^{e^{-(n+1)c}} \psi(s)\,ds.
\]
Finally, take $t\in [e^{s_0-nc},e^{s_0-(n+1)c}]$. Then $e^{-nc} e^{-c} \leq e^{-s_0}e^{-c}t$. Since $\psi$ is regularly varying, and $\psi$ is not integrable, $\int \psi$ is also regularly varying. Hence
\[
\int_1^{e^{-(n+1)c}} \psi(s)\,ds \leq \int_1^{e^{-c} e^{-s_0}t} \psi(s)\,ds
\leq C_4  \int_1^{t} \psi(s)\,ds.
\]
On the other hand, $e^{-nc}\leq  e^{-s_0} t$, so $\alpha^n=e^{-nc\kappa}\geq  e^{-\kappa s_0} t^\kappa$. Thus
\[
|z(t)|\leq 	\sup_{t\in [e^{s_0-nc},e^{s_0-nc}e^{-c}]}  |z(t)| \leq C_3 \cdot e^{-\kappa s_0} t^\kappa \cdot C_4  \int_1^{t} \psi(s)\,ds,
\]
as required. In part (B), and for the first part of the claim in (C), this suffices. 

For the second claim in (C), we have $\varphi=\text{O}(\gamma)$, with $\gamma\in \text{RV}_\infty(\eta)$ and $\eta>\kappa$. This allows us to take  $\psi(t)=\gamma(t)/t^{\kappa+1}\in \text{RV}_\infty(\beta)$ with $\beta=\eta-\kappa-1>-1$, so the hypotheses in the first part of the claim apply. Further, by the regular variation of $\psi$,
\[
\int_0^t \psi(s)\,ds \sim \frac{1}{\beta+1} t \psi(t) = \frac{1}{\eta-\kappa} \frac{\gamma(t)}{t^\kappa}, \quad t\to\infty,
\]  
and so applying the first part of the claim gives $z(t)=\text{O}(\gamma(t))$, $t\to\infty$. 

The conclusions of the different parts of the theorem can be combined. Parts (A) and (B) can be combined: let $\varphi=\text{O}(\gamma)$ where $\gamma \in \text{RV}_\infty(\kappa)$. Then $\psi(t):=\gamma(t)/t^{\kappa+1}$ for $t\geq 1$ is in $\text{RV}_\infty(-1)$, and $\varphi(t)=\text{O}(t^{\kappa+1}\psi(t))$. Then, in (A) and (B), we have 
\[
z(t)=\text{O}\left(t^\kappa\int_1^t \psi(s)\,ds\right) = \text{O}\left(t^\kappa\int_1^t \frac{\gamma(s)}{s^{\kappa+1}}\,ds\right), \quad t\to\infty.
\] 
For part (C), $\eta>\kappa$, so $\psi$  
is in $\text{RV}_\infty(\eta-\kappa-1)$. Since $\eta-\kappa-1>0$, the function in the big O--estimate is asymptotic to $\gamma(t)/(\eta-\kappa)$ as $t\to\infty$,  
so by part (C), 
$z(t)=\text{O}(\gamma(t)) = \text{O}\left(t^\kappa\int_1^t \frac{\gamma(s)}{s^{\kappa+1}}\,ds\right)$ as $t\to\infty$,
which agrees with (A) and (B). Finally, for (D), where $\eta<\kappa$, $\psi \in \text{RV}_\infty(\eta-\kappa-1)$. Since $\eta-\kappa-1<-1$, $\psi \in L^1(1,\infty)$, and so by (D)
\[
z(t)=\text{O}(t^\kappa) = \text{O}\left(t^\kappa\int_1^t \frac{\gamma(s)}{s^{\kappa+1}}\,ds\right), \quad t\to\infty.
\] 
This allows the desired consolidation of the statement of the theorem.
\end{proof}

\section{Conclusions}

The paper considers perturbations of the asymptotically stable deterministic pantograph equation 
\[
u'(t)=bu(t)+au(qt)
\]
where $a\neq 0$. As is well--known, all solutions of this equation tend to zero if and only if $b<0$ and $|a|<|b|$. We thus presume these conditions hold, and ask what is the asymptotic behaviour of forced equations 
\begin{align*}
x'(t)&=bx(t)+ax(qt)+f(t), \\
dX(t)&=bX(t)+aX(qt)+f(t)\,dt +\sigma(t)\,dB(t).
\end{align*}  
We can characterise, in terms of $f$ and $\sigma$, necessary and sufficient conditions such that solutions are bounded or convergent to zero, assuming only that the underlying equation has stable solutions. These conditions depend on the time averages of $f$ and $\sigma$ over moving compact intervals, but do not depend on the parameters ($a$, $b$ and $q$) of the underlying unperturbed equation.
The conditions allow $f$ and $\sigma$ to behave very badly on a pointwise basis, while being satisfied by often--assumed stability conditions such as $f(t)\to 0$ or $f\in L^1(0,\infty)$. The conditions on the stochastic perturbation are more delicate; the a.s. stability or a.s. boundedness is captured by the convergence or divergence of an infinite sum which depends on a parameter. This condition is known to characterise the a.s. stability or a.s. boundedness of solutions of the stochastic ordinary differential equation
\[
dY_0(t)=-Y_0(t)\,dt + \sigma(t)\,dB(t)
\]
a result proven in an earlier work. The necessary and sufficient conditions are a little opaque, but a very good sufficient condition is that 
\begin{equation} \label{eq.sig2nlognto0}
\sigma_1^2(n) \log n\to 0, \quad n\to\infty;
\end{equation}
where $\sigma_1^2(n)=\int_{n-1}^{n} \sigma^2(s)\,ds$; 
indeed, if $\sigma^2_n$ is decreasing, \eqref{eq.sig2nlognto0} is necessary, so once again we see that it is the average behaviour of $\sigma^2$ over moving compact intervals that determines the asymptotic behaviour. 

A key observation in the entire analysis is that the stability of the deterministic functional differential equation is inherited from the ordinary differential equation 
\[
y'=-y+f
\] 
and the SFDE from $Y=Y_0+y$, which solves a ``doubly perturbed'' SDE. It turns out $Y$ tends to zero if and only if both $Y_0$ and $y$ are, while $Y$ is bounded if and only if $Y_0$ and $y$ are.   

As well as characterising boundedness and convergence, the approach enables us to determine very sharp conditions describing the rate of growth or decay of deterministic equations. If the solution is to grow, or has a growing running maximum, the rate of growth of its running maximum is that of $y$, and this can be characterised entirely through the time averages $\int_{t-\theta}^t f(s)\,ds$, which has the same asymptotic order of $y$. In terms of estimating the growth order, and characterising when certain growth orders of solutions occur, these results are unimprovable. Crucially, they show that relying on pointwise estimates on $f$ may give very poor and misleading asymptotic information. 

When solutions tend to zero, they do so slowly in most cases, since the unperturbed equation has solutions which tend to zero at the rate $t^\kappa$, where $\kappa<0$ is known in terms of $a$, $b$ and $q$. Existing analysis for general equations with unbounded delay, applied to the pantograph equation, gives estimates of $x(t)=O(f(t))$ when $f(t)=O(t^\beta)$ and $f'(t)=O(t^{\beta-1})$, and $\beta>\kappa$, and $x(t)=O(t^\kappa)$ when $f(t)=O(t^\beta)$ for $\beta<\kappa$. We show, in contrast, that when $y(t)=O(\gamma(t))$ and $\gamma$ is regularly varying, then 
\[
x(t)=O\left(t^\kappa \int_1^t \frac{\gamma(s)}{s^{\kappa+1}}\,ds\right),\quad t\geq 0
\]     
which shows that neither pointwise nor derivative bounds are needed on $f$. Moreover, the estimate we establish is sharper in some cases than existing estimates, is never weaker than existing estimates, and is unimprovable in at least some situations (for equations in which $a>0$). 

The rate of growth of $y$ plays a determining role the asymtotic behaviour when $y$ dominates the solution of the unperturbed equation. For instance, if $\gamma$ is regularly varying with index greater than $\kappa$, or $\gamma$ is increasing, then $x(t)=o(\gamma(t))$ if and only if $y(t)=o(\gamma(t))$, $x(t)=O(\gamma(t))$ if and only if $y(t)=O(\gamma(t))$ and $\limsup_{t\to\infty} |x(t)|/\gamma(t)\in (0,\infty)$ if and only if $\limsup_{t\to\infty}|y(t)|/\gamma(t)\in (0,\infty)$. 

We have sufficient conditions which describe when the rate of decay of the underlying pantograph equation is preserved. If $y(t)=O(\gamma(t))$, $\gamma$ is regularly varying, and $t\mapsto\gamma(t)/t^{\kappa+1}$ is integrable, then $x(t)=O(t^\kappa)$ as $t\to\infty$. These conditions seem, in certain cases at least, unimprovable, since we have shown by means of general examples, that if the integrability condition is relaxed, the unperturbed rate of decay will not be preserved. Good necessary conditions prove for the moment, more elusive. We know that if $x(t)=O(t^\kappa)$, then $y(t)=O(t^\kappa)$; but taking the sufficient condition above into account, it seems that the maximal allowable rate of growth of $y$ would be such that $t\mapsto |y(t)|/t^{\kappa+1}\in L^1(0,\infty)$, a condition satisfied in the case $y(t)=O(t^\kappa\{\log t\log\log t(\log\log \log t)^2\}^{-1})$, for instance. Thus, our best converse result in the case of a small perturbation is still suboptimal. 

Although it might seem strange that we can characterise asymptotic bounds for ``large'' perturbations, but not ``small'' ones, we saw by means of an example that it is possible to arrange for the solution of the perturbed equation to decay at an arbitrarily fast rate, for a very carefully chosen and very rapidly decaying forcing term. Therefore, there is potentially great sensitivity in the rate of decay of solutions when there are small forcing terms, and this probably explains in part why it is hard to prove a very refined converse. As a first step though, our converse is not unacceptable: the above discussion suggests that our estimate on the critical $y$ is only off by a factor which grows at around the (very slow) rate of $\log t$ as $t\to\infty$.   

In this paper, we have not attempted to give quantitative asymptotic estimates for the solutions of stochastic equations. In part, this is because a characterisation of the rates of decay and growth of solutions of \eqref{eq.Y} is needed, and this task forms part of work in progress. However, once the task of characterising the asymptotic rates of growth and decay of $Y$ is completed, the analysis in this paper (see Theorems~\ref{thm.smallstochpert} and \ref{thm.detlargepert}) should be reusable with minor modifications, with Theorems~\ref{thm.growthbounds2} and \ref{thm.monotonepert} applicable to $Z=X-Y$, with the estimate on $\varphi$ being read off from $Y$.

There are two other directions where the results of this paper could readily be developed. One is in the direction of ``unstable'' equations, which is to say equations where the condition \eqref{eq.unforcedasstable} do not hold, and the unperturbed equation has unbounded solutions. The case when $b>0$ should be more straightforward, since it that case, solutions of the unperturbed equation grow like $e^{bt}$, so one would hope that the solution of the perturbed equation would grow at the faster rate between $e^{bt}$ and $\gamma$, where $\gamma$ describes the rate of growth of $y$.

The case when $b<0$, and $|a|>|b|$ is more interesting, since in that case, the unperturbed equation exhibits power law growth at rate $t^\kappa$; now $\kappa>0$. In this situation, we expect analogues of Theorems~\ref{thm.detlargepert} and \ref{thm.smallstochpert} to hold. Revisiting the proof of the fundamental Lemma~\ref{lemma.fundamental} reveals that the estimates hold up to \eqref{eq.Knexplicitestimate}, with perhaps some cosmetic changes needed earlier. Therefore it is probable that we will be able to estimate as in the proof of Theorem~\ref{thm.growthbounds2}. As a result, we expect a theorem analogous to Theorems~\ref{thm.detlargepert} to hold if $y(t)=O(\gamma(t))$ and  $\gamma$ is regularly varying with index greater than $\kappa$ (or $\gamma$ is increasing faster than a regularly varying function), while a theorem
analogous to Theorem~\ref{thm.smallstochpert} should hold if $\gamma$ has index less than or equal to $\kappa$. 

In Section 5, we commented that generalisations to equations with unbounded delay are possible, and we were able to sketch convergence and boundedness characterisation results directly. However, if growth bounds on solutions are required, we noted but that further work would be required, both on stochastic ordinary differential equations, as well as the analysis of Cermak in the deterministic case in order to sharpen estimates. Extensions of the convergence and boundedness results to the finite dimensional case are feasible too, but at the cost of stronger sufficient conditions being imposed on the structure of the unperturbed equation. We conjecture that a perturbation theorem which states that property \eqref{eq.perturbedpropertymultid} holds under the conditions \eqref{eq.iser1} and \eqref{eq.iser2} is true, but do not have immediate ideas how to proceed with it. The situation wherein there is multiplicative noise has been studied more extensively in the literature, but very refined results (i.e., results which classify the dynamics) appear to be lacking. However, by using the spirit of the approach taken here (decomposition of the solution into the solution of a stochastic ordinary differential equation, and the $C^1$ solution of a random functional differential equation), we have produced a result in the spirit of the very first results of Kato and McLeod: exponential asymptotic behaviour when the underlying non--delay equation is unstable, and subexponential asymptotic behaviour when the underlying non--delay equation is asymptotically stable. It is appealing that such progress is made for an equation with multiplicative noise by a multiplicative decomposition, since the bulk of this paper deals with additive noise, and is (we believe) successfully tackled by an additive decomposition of the solution. Further exploration of the multiplicative decomposition proposed here warrants a deeper investigation.     

We finish with a remark that when perturbations are ``small'', the solution $x$ of the perturbed pantograph equation is $O(t^\kappa)$. But it is well--known that solutions of the unperturbed equation exhibit log--periodic oscillation in the case that $a<0$. With further careful analysis, it may be possible to show, for sufficiently small forcing terms, that the perturbed equation inherits this oscillatory asymptotic behaviour, even in the stochastic case. Such results would require developing analogues of Kato and McLeod's arguments. Since some of their analysis involves repeated differentiation of the equation, the double mollification ideas (or even higher order mollifications) sketched in Section 5.2 will be necessary if the results are not to require estimates of $f$ and its derivatives in the deterministic case; indeed, such mollification would be necessary if one wished to prove more refined asymptotic results for stochastic equations, as suggested in Section 5.2.

\bibliographystyle{unsrt}


\end{document}